


\documentclass[twoside, final]{amsart} 



\usepackage{amsmath, amsfonts, amsthm, amssymb}
\usepackage{exscale}
\usepackage{cite}
\usepackage{epsfig}
\usepackage{amscd}
\usepackage{graphics}
\usepackage{color}
\usepackage{dsfont}

\usepackage[notref, notcite]{showkeys}

\renewcommand{\geq}{\geqslant}
\renewcommand{\leq}{\leqslant}

\newtheorem{theorem}{Theorem}

\newtheorem{proposition}{Proposition}[section]

\newtheorem{corollary}[proposition]{Corollary}
\newtheorem{lemma}[proposition]{Lemma}
\newtheorem*{main-theorem}{Main Theorem}
\newtheorem*{theorem*}{Theorem}

\theoremstyle{definition}

\newtheorem{remark}[proposition]{Remark}

\newtheorem*{remark*}{Remark}

\numberwithin{equation}{section}

\def\phi{\varphi}

\def\reals{{\mathbb R}}
\def\cx{{\mathbb C}}

\def\Ci{{\mathcal C}^\infty}
\def\Re{\,\mathrm{Re}\,}
\def\Im{\,\mathrm{Im}\,}

\def\supp{\mathrm{supp}\,}

\def\id{\,\mathrm{id}\,}

\def\O{{\mathcal O}}
\def\SS{{\mathbb S}}
\def\s{{\mathcal S}}

\def\phi{\varphi}

\def\be{\begin{eqnarray*}}
\def\ee{\end{eqnarray*}}
\def\ben{\begin{eqnarray}}
\def\een{\end{eqnarray}}

\def\lll{\left\langle}
\def\rrr{\right\rangle}

\def\EE{\mathcal{E}}
\def\L2R{L_{\text{Rest}}^2}

\def\11{\mathds{1}}

\def\HH{\mathcal{H}}

\def\tchi{\tilde{\chi}}
\def\L2c{L^2_{\text{comp}}}

\def\tDelta{\widetilde{\Delta}}
\def\tV{\widetilde{V}}

\def\tP{\widetilde{P}}
\def\tp{\tilde{p}}
\def\tu{\tilde{u}}

\def\tA{\tilde{A}}

\def\tR{\tilde{R}}

\def\Vol{\text{Vol}}

\def\B{\mathcal{B}}
\def\tg{\tilde{g}}
\def\Slo{S_{\text{lo}}}

\def\Slojp{S_{\text{lo},j,+}}
\def\Slojm{S_{\text{lo},j,-}}
\def\Slojpm{S_{\text{lo},j,\pm}}
\def\Sloo{S_{\text{lo},0,0}}

\def\Shi{S_{\text{hi}}}
\def\tE{\widetilde{E}}
\def\p{\partial}

\begin{document}

\title[Strichartz estimates with Loss]{Near Sharp Strichartz estimates with loss
  in the presence of degenerate hyperbolic trapping}

\author[Christianson]{Hans~Christianson}
\email{hans@math.unc.edu}
\address{Department of Mathematics, UNC-Chapel Hill \\ CB\#3250
  Phillips Hall \\ Chapel Hill, NC 27599}

\subjclass[2000]{}
\keywords{}

\begin{abstract}

We consider an $n$-dimensional spherically symmetric, asymptotically Euclidean manifold with
two ends and a codimension $1$ trapped set which is degenerately
hyperbolic.  By separating variables and constructing a semiclassical parametrix for
a time scale polynomially beyond Ehrenfest time, we show that solutions to the linear Schr\"odiner
equation with initial conditions localized on a spherical harmonic satisfy Strichartz estimates with a loss depending only on the
dimension $n$ and independent of the degeneracy.  The Strichartz estimates are
sharp up to an arbitrary $\beta>0$ loss.   This is in contrast
to \cite{ChWu-lsm}, where it is shown that solutions satisfy a sharp local
smoothing estimate with loss depending only on the degeneracy of the
trapped set, independent of the dimension.

\end{abstract}

\maketitle

\section{Introduction}
\label{S:intro}

It is well known that there is an intricate interplay between the
existence of {\it trapped geodesics}, those which do not escape to
spatial infinity, and {\it dispersive estimates} for the associated
quantum evolution.  Trapping can occur in many different ways, from a
single trapped geodesic (see \cite{Bur-sm, BuZw-bb, Chr-NC, Chr-QMNC,
  Chr-disp-1}), to a thin fractal trapped set (see \cite{NoZw-qdr,
  Chr-wave-2, Chr-sch2, Dat-sm}), to codimension $1$ trapped sets in
general relativity (see, for example,
\cite{BlSo-decay,BoHa-decay,DaRo-RS,MMTT-Sch,Luk-decay,TaTo-Kerr,LaMe-decay,WuZw-nhre}
and the references therein), to elliptic trapped
sets and boundary value problems.  Dispersive type estimates also come
in many flavors, but are all designed to express in some manner that
the mass of a wave function tends to spread out as the wave function
evolves.  Since the mass of wave functions tends to move along the
geodesic flow, the presence of trapped geodesics suggests some
residual mass may not spread out, or may spread out more slowly than
in the non-trapping case.  In this paper, we concentrate on {\it
  Strichartz estimates}, and exhibit a class of manifolds for which we
prove near-sharp Strichartz estimates with a loss depending only on
the dimension of the trapped set.  This class of manifolds has already
been studied in \cite{ChWu-lsm}, where a sharp local smoothing
estimate is obtained with a loss depending only on how flat the
manifold is near the trapped set.  This presents an interesting
dichotomy conjecture: ``loss in local smoothing depends only on the
kind of trapping, while loss in Strichartz estimates depends only on
the dimension of trapping''.

The purpose of this paper is to
study a very simple class of manifolds with a hypersphere of trapped
geodesics.  If the dynamics near such a sphere are strictly
hyperbolic in the normal direction, then resolvent estimates are already obtained in
\cite{WuZw-nhre} (see also \cite{Chr-disp-1,Chr-sch2}) which can be used to prove local smoothing estimates
with only a logarithmic loss.
However, if the dynamics are only weakly 
hyperbolic, resolvent estimates and local smoothing estimates are
obtained in \cite{ChWu-lsm} with a sharp polynomial loss in both.  We
now turn our attention to studying Strichartz estimates, which are
mixed $L^p L^q$ time-space estimates.  The typical procedure for
proving Strichartz estimates is to construct parametrices (approximate
solutions), which encode how wave packets move with the geodesic
flow.  For solutions of the Schr\"odinger equation, wave packets at
higher frequency move at a higher velocity, so the presence of trapping, or more precisely of conjugate
points means that parametrices can typically be constructed only for
time intervals {\it depending on the frequency} of the wave packet.
Then summing up many parametrices to get an estimate on a fixed time scale
leads to derivative loss in Strichartz estimates.

However, if the trapped set is sufficiently thin and hyperbolic, we
expect that {\it most} of a wave packet still propagates away quickly,
and a
procedure developed by Anantharaman \cite{Anan-entropy} allows one to
exploit this to logarithmically extend the timescale on which one can construct a
parametrix leading to Strichartz estimates with no loss 
\cite{BGH}.

For the manifolds studied in this paper, the trapping is degenerately
hyperbolic, so we still expect some mass of each wave packet to
propagate away, but at a much slower rate than the strictly hyperbolic
case.  As a consequence, we need to extend the parametrix {\it
  polynomially} in time to get sharp estimates.  The techniques in
\cite{Anan-entropy,BGH} will not work in this situation since the
$\O(h^\infty)$ estimate of decaying correlations will not control the
{\it exponential} number of such correlations.  
In this paper, we fail to prove estimates all the way to the sharp
polynomial timescale, but we are nevertheless able to extend the parametrix
construction to the sharp timescale up to an arbitrary $\beta>0$ loss, which is
expressed as a loss in derivative in the main theorem.
Further, the technique of proof involves decomposing the
solution in terms of spherical harmonics in order to reduce the
problem to a $1$-dimensional semiclassical parametrix construction.  Lacking a
square-function estimate for spherical harmonics, the proof only works
for initial data localized along one spherical harmonic eigenspace.  In this
sense, the result shows more about the {\it natural semiclassical
timescale}, polynomially extended beyond Ehrenfest time, for which we have good control of
the rate of dispersion.

We begin by describing the geometry.  We consider $X = \reals_x \times \SS^{n-1}_\theta$, equipped with the metric
\[
g = d x^2 + A^2(x) G_\theta,
\]
where $A \in \Ci$ is a smooth function, $A \geq \epsilon>0$ for some
epsilon, and $G_\theta$ is the metric on $\SS^{n-1}$.    
From this metric, we get the volume form
\[
d \Vol = A(x)^{n-1} dx d \sigma,
\]
where $\sigma$ is the usual surface measure on the sphere.  The
Laplace-Beltrami operator acting on $0$-forms is computed: 
\[
\Delta f = (\partial_x^2 + A^{-2} \Delta_{\SS^{n-1}} + (n-1) A^{-1}
A' \partial_x) f,
\]
where $\Delta_{\SS^{n-1}}$ is the (non-positive) Laplace-Beltrami
operator on the sphere.

We study the case $A(x) = (1 + x^{2m})^{1/2m}$, $m \geq 2$, in which case the manifold is asymptotically Euclidean
(with two ends), and has a trapped hypersphere at the unique critical point $x =
0$.  Since $A(x)$ has a degenerate minimum at $x = 0$, the trapped
sphere is {\it weakly} normally hyperbolic in
the sense that it is unstable and isolated, but degenerate (see Figure
\ref{fig:fig1}).

Our main theorem is the following, which expresses that a solution of
the linear homogeneous Schr\"odinger equation on this manifold
satisfies Strichartz estimates with loss depending only on the
dimension $n$, up to an arbitrary $\beta>0$ loss.

\begin{theorem}
\label{T:T1a}
Suppose $u$ solves
\begin{equation}
\label{E:Sch-1}
\begin{cases}
(D_t + \Delta ) u = 0, \\
u|_{t=0} = u_0,
\end{cases}
\end{equation}
where $u_0 = H_k u_0$ is localized on the spherical harmonic subspace of
order $k$.  
Then for any $T, \beta >0$, there exists $C_{T,\beta}>0$ such that 
\begin{equation}
\label{E:main-str}
\| u \|_{L^p([0,T]) L^{q}(M)} \leq C_{T,\beta} \| \lll D_\theta
\rrr^{(n-2)/pn + \beta} u_0 \|_{L^2(M)},
\end{equation}
where
\[
\frac{2}{p} + \frac{n}{q} = \frac{n}{2},
\]
and $2 \leq q < \infty$ if $n = 2$.

\end{theorem}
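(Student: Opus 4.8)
The plan is to reduce the problem to a one–dimensional semiclassical problem by separating variables, then build a parametrix on a time interval that is polynomially longer than Ehrenfest time, and finally feed this parametrix into the usual $TT^*$ machinery. Since $u_0 = H_k u_0$ is supported on a single spherical harmonic eigenspace, we may write $u(t,x,\theta) = \sum_j e^{-it\lambda_j} v_j(x) Y_j(\theta)$, where $Y_j$ runs over an orthonormal basis of the order-$k$ harmonics and $-\Delta_{\SS^{n-1}} Y_j = k(k+n-2) Y_j$. The operator $D_t + \Delta$ then acts on each $v_j$ as a one–dimensional Schr\"odinger operator $D_t + \partial_x^2 - A^{-2}(x) k(k+n-2) + (n-1)A^{-1}A' \partial_x$; conjugating by $A^{(n-1)/2}$ removes the first-order term and produces an effective potential with a nondegenerate-looking but in fact $x^{2m}$-degenerate barrier near $x=0$. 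Introducing the semiclassical parameter $h = \langle k \rangle^{-1}$, I would rescale so that the problem becomes a semiclassical Schr\"odinger evolution $h D_t + (hD_x)^2 + V(x)$ with $V$ having a degenerate maximum of order $2m$ at the trapped sphere $x=0$.

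The core analytic step is the parametrix construction on a time interval $|t| \le T_0 = c h^{-\delta}$ (measured in semiclassical time), i.e., polynomially beyond the Ehrenfest time $\sim \log(1/h)$. Away from the trapped set one has a standard WKB/Fourier-integral parametrix, valid for $\O(1)$ semiclassical time, from which the classical dispersive estimate $\|e^{-it\Delta}\|_{L^1 \to L^\infty} \lesssim |t|^{-n/2}$ follows microlocally and then, by the Keel–Tao argument, the standard Strichartz estimates with no loss. The subtlety is the piece microlocalized near the trapped hypersphere. Here I would invoke the results of \cite{ChWu-lsm} and \cite{WuZw-nhre}: because the trapping is only weakly (degenerately) hyperbolic of order $2m$, the classical escape rate off the barrier is polynomial rather than exponential, and a normal-form analysis near $x=0$ (essentially a second-microlocal / second-order resolvent estimate) shows that after semiclassical time $\sim h^{-\delta}$ all but an $\O(h^{\beta})$ fraction of the mass has escaped the trapping region. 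Reassembling, the trapped contribution to the solution on the fixed time interval $[0,T]$ is obtained by summing $\sim h^{-\delta}\cdot (\text{time rescaling})$ copies of the short-time parametrix; each loses nothing individually, and the number of pieces together with the $h^{\beta}$ leakage loss yields a total loss of $h^{-\beta'}$, which translates into the $\langle D_\theta \rangle^{\beta}$ factor in \eqref{E:main-str}.

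More concretely, the scheme is: (1) decompose $u_0 = \sum_j Y_j \otimes v_{0,j}$ and reduce to the $1$-D semiclassical flow with $h \sim \langle k\rangle^{-1}$; (2) microlocally split $v_{0,j}$ into a piece supported away from the trapped energy shell, handled by the classical WKB parametrix and Keel–Tao to get full Strichartz with no loss (this produces the $L^2$ norm of $v_{0,j}$ with no derivative cost); (3) for the trapped piece, rescale time by $h^{\delta}$ and iterate the short-time parametrix $\sim h^{-\delta}$ times, using the degenerate-hyperbolic decay estimate from \cite{ChWu-lsm} at each stage to control the error; (4) sum over $j$ using orthogonality of the $Y_j$ and the fact that $\langle D_\theta\rangle^s$ acts as the scalar $\langle k\rangle^s$ on the order-$k$ block, converting the accumulated $h^{-\beta}$ loss in the semiclassical parametrix into the stated $\langle D_\theta\rangle^{(n-2)/pn+\beta}$ loss; the $(n-2)/pn$ exponent is exactly the standard loss from interpolating the $1$-D dispersive bound against the $(n-1)$-dimensional sphere factor, as in the Burq–Gérard–Tzvetkov analysis on compact manifolds. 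The main obstacle, and the reason for the $\beta>0$ loss rather than a sharp result, is precisely step (3): the $\O(h^\infty)$ decay-of-correlations estimates that power the Anantharaman/\cite{BGH} logarithmic extension are far too weak to survive the \emph{exponentially many} terms that arise over a \emph{polynomial} time scale, so one must instead track the polynomial escape rate quantitatively and accept that the iteration cannot quite reach the sharp timescale, leaving the arbitrarily small but positive loss $\beta$.
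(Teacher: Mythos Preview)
Your overall architecture matches the paper's: conjugate by $A^{(n-1)/2}$, separate variables to a one-dimensional semiclassical problem with $h \sim k^{-1}$, build a parametrix on a polynomial timescale, and glue via the local smoothing estimate of \cite{ChWu-lsm}. Two points deserve correction.

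First, the $(n-2)/pn$ loss does not come from ``interpolating'' a 1-D dispersion bound, and the $|t|^{-n/2}$ you write is inconsistent with having already reduced to one dimension. After separation the dispersion is $(ht)^{-1/2}$, which yields the \emph{lossless} one-dimensional Strichartz estimate $\|v_{lk}\|_{L^{2n}_T L^{2^\star}} \lesssim \langle k\rangle^\beta \|v_{lk}^0\|_{L^2}$ (Proposition~\ref{P:mode-str}). The $(n-2)/2n$ factor enters separately, via Sogge's $L^{2(d+1)/(d-1)}$ eigenfunction bound on $\SS^{n-1}$ applied to $H_k v$, together with a H\"older step in $t$ needed because the $n$-dimensional admissible pair $(2,2^\star)$ is not 1-D admissible --- its 1-D Strichartz dual is $(2n,2^\star)$.

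Second, and more substantively, your mechanism near the trapping differs from the paper's. You propose a mass-escape iteration (``all but an $\O(h^\beta)$ fraction of the mass has escaped''), but the paper does \emph{not} iterate or rely on escape near $x=0$. Instead it applies an anisotropic blowup $T_h:(t,x)\mapsto (h^{(m-1)/(m+1)}t,\,h^{-1/(m+1)}x)$ that converts the marginal problem into a scale-invariant one on which a \emph{single} WKB phase exists for the full time $|t|\lesssim h^{(1-m)/(m+1)}$, with $\phi_{\eta\eta}\sim 2t$ giving the $(ht)^{-1/2}$ dispersion directly (Lemmas~\ref{L:Slooo}--\ref{L:disp-sloo}). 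A dyadic decomposition into $\O(\log(1/h))$ spatial shells handles the transition region $h^{1/(m+1)}\lesssim |x|\lesssim 1$. The local smoothing of \cite{ChWu-lsm} is invoked only at the very end, to glue the $\sim h^{-2/(m+1)}$ time intervals of length $h^{2/(m+1)}$ into $[0,T]$ via Duhamel and Christ--Kiselev. The $\beta$ loss in the paper comes from the amplitude construction in the marginal calculus --- one must restrict to $|t|\le h^\beta\cdot h^{(1-m)/(m+1)}$ so that the transport hierarchy actually gains powers of $h$ --- not from counting escaped mass.
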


\begin{remark}
There are several important observations to make about Theorem
\ref{T:T1a}.  First, this theorem concerns {\it endpoint} Strichartz
estimates. 
In dimension $n \geq 3$, if we take $\beta < 1/n$, the
loss in derivative is then $(n-2)/2n  + \beta < 1/2$; that is, the loss
is always less than the loss following the argument of
Burq-G\'erard-Tzvetkov \cite{BGT-comp} (which gives a loss of $1/2$
for endpoint estimates in $n \geq 3$).  Second, there is only a
$\beta>0$ loss over the Euclidean (scale-invariant) estimates 
if $n = 2$, that is, if the trapped set is a single degenerate
periodic geodesic, so we can get as close to the no-loss estimates as
we like.  We expect the $\beta>0$ derivative loss can actually be removed in
all dimensions, but this is beyond our techniques.  Third, in all dimensions, the loss depends {\it only on the
  dimension of the trapped set}.  It does not depend on $m$, the order
of degeneracy of the trapping.  This is in sharp contrast to the local
smoothing effect, which depends only on $m$, and {\it not} on the
dimension $n$ (see \cite{ChWu-lsm} and below).


For dimensions $n \geq 3$, the estimate \eqref{E:main-str}
is near sharp on natural semiclassical time scales (see Corollary
\ref{C:C1a}), in the sense that no better {\it polynomial} derivative
estimate can hold.  In dimension $n = 2$, the same is true by
comparing to the scale-invariant case.

Finally, since $u_0$ is localized to a single spherical harmonic, the
estimate in the theorem can be rephrased, since
\[
\| \lll D_\theta \rrr^{(n-2)/pn + \beta} u_0
\|_{L^2(M)} \sim \| \lll k \rrr^{(n-2)/pn + \beta} u_0 \|_{L^2(M)}.
\]
\end{remark}

\begin{figure}
\hfill
\centerline{\input{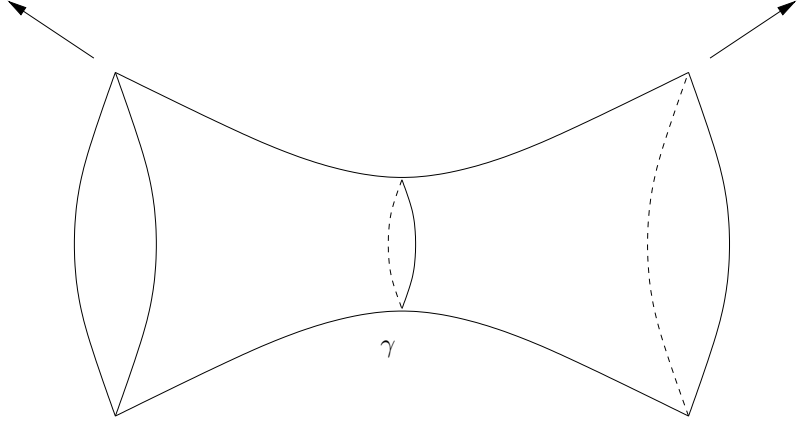}}
\caption{\label{fig:fig1} A piece of the manifold $X$ and the trapped
  sphere at $x=0$.   }
\hfill
\end{figure}

\subsection*{Acknowledgements}
The author would like to thank Nicolas Burq, Kiril Datchev, Colin Guillarmou, Jeremy
Marzuola, Jason Metcalfe, Fabrice Planchon, and Michael Taylor for
helpful and stimulating conversations during the preparation of this manuscript.

\section{Reduction in dimension}

In this section we use a series of known techniques and estimates to
reduce study of the Schr\"odiner equation on $M$ to the study of a
semiclassical Schr\"odiner equation on $\reals$ with potential.  The
potential has a degenerate critical point, and we use a technical
blow-up calculus to construct a sequence of parametrices near the
critical point.

We observe that we can conjugate
$\Delta$ by an isometry of metric spaces and separate variables so
that spectral analysis of $\Delta$ is equivalent to a one-variable
semiclassical problem with potential.  That is, let $T : L^2(X, d
\Vol) \to L^2(X, dx d \theta)$ be the isometry given by
\[
Tu(x, \theta) = A^{(n-1)/2}(x) u(x, \theta).
\]
Then $\tDelta = T \Delta T^{-1}$ is essentially self-adjoint on $L^2 (
X, dx d \sigma)$ for our choice of $A$.  A simple calculation gives
\[
-\tDelta f = (- \partial_x^2 - A^{-2}(x) \Delta_{\SS^{n-1}} + V_1(x) )
f,
\]
where the potential
\[
V_1(x) = \frac{n-1}{2} A'' A^{-1} + \frac{(n-1)(n-3)}{4} (A')^2 A^{-2}.
\]

Of course, conjugating the Laplacian by an $L^2$ isometry does not
necessarily preserve $H^s$ or $L^q$ spaces.

\begin{lemma}
\label{L:T-conj-lemma}
With the notation $A(x) = (1 + x^{2m})^{1/2m}$ from above, for $s \geq 0$,
\[
\| Tu \|_{H^s(dx d \sigma)} \leq C \| u \|_{H^s(d\Vol)},
\]
\[
\| \lll -\Delta_{\SS^{n-1}} \rrr^s Tu \|_{L^2(dx d \sigma )} = \| \lll -\Delta_{\SS^{n-1}} \rrr^s u
\|_{L^2(d\Vol)},
\]
and for $q \geq 2$,
\[
\| u \|_{L^q(d\Vol)} \leq C \| Tu \|_{L^q(dx d \sigma)}.
\]
\end{lemma}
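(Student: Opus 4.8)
The plan is to handle the three assertions separately, since only the first has any content. The middle identity is immediate: $A^{(n-1)/2}$ is a function of $x$ only, hence commutes with every operator acting in the $\theta$-variable, so $\langle -\Delta_{\SS^{n-1}}\rangle^s Tu = A^{(n-1)/2}\langle -\Delta_{\SS^{n-1}}\rangle^s u = T(\langle -\Delta_{\SS^{n-1}}\rangle^s u)$; since $T$ is by construction an isometry of $L^2(d\Vol)$ onto $L^2(dx\,d\sigma)$, the two $L^2$-norms agree. The third assertion is an elementary pointwise comparison of weights: with $d\Vol = A^{n-1}\,dx\,d\sigma$ one has $\|u\|_{L^q(d\Vol)}^q = \int |u|^q A^{n-1}\,dx\,d\sigma$ and $\|Tu\|_{L^q(dx\,d\sigma)}^q = \int |u|^q A^{q(n-1)/2}\,dx\,d\sigma$; since $A \geq 1$ and $q \geq 2$, so that $q(n-1)/2 \geq n-1 \geq 0$, we get $A^{n-1} \leq A^{q(n-1)/2}$ pointwise, and the inequality holds with $C=1$.

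For the first assertion the key input is a structural feature of $A(x) = (1+x^{2m})^{1/2m}$: although $A$ itself is unbounded, it is smooth, bounded below by $1$, and all of its derivatives of positive order are bounded on $\reals$ (indeed $A'\to\pm1$ while $A^{(j)}\to0$ for $j\geq2$, uniformly, as one checks directly from $A^{2m}=1+x^{2m}$). Consequently every ``logarithmic derivative'' of $A$ — the functions $A'/A$, $A''/A$, $(A'/A)^2$, and more generally $A^{-r}\partial_x^j A^{r}$ for any real $r$ and any $j$ — is bounded. I would then argue first for integer $s$: applying $\partial_x^j$ and tangential derivatives on $\SS^{n-1}$ of total order $\leq s$ to $Tu = A^{(n-1)/2}u$ and using Leibniz (the $\theta$-derivatives pass straight through $A^{(n-1)/2}$), every resulting term has the form $A^{(n-1)/2}\,b(x)\,(\partial^\alpha u)$ with $b$ bounded — namely $b = A^{-(n-1)/2}\partial_x^{j'}A^{(n-1)/2}$ for some $j'\leq j$, a polynomial in the logarithmic derivatives of $A$ — and $|\alpha| \leq s$. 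Hence $\|A^{(n-1)/2} b\, \partial^\alpha u\|_{L^2(dx\,d\sigma)} = \|b\,\partial^\alpha u\|_{L^2(d\Vol)} \leq \|b\|_{L^\infty}\|\partial^\alpha u\|_{L^2(d\Vol)} \leq C\|u\|_{H^s(d\Vol)}$, and summing the finitely many terms gives $\|Tu\|_{H^s(dx\,d\sigma)} \leq C\|u\|_{H^s(d\Vol)}$. For general $s\geq 0$ the estimate follows by interpolating between consecutive integers, multiplication by $A^{(n-1)/2}$ being bounded from $H^j(d\Vol)$ to $H^j(dx\,d\sigma)$ for each integer $j$; equivalently, one may conjugate $\langle D\rangle^s$ by $A^{(n-1)/2}$ directly in the calculus of $x$-pseudodifferential operators with symbols valued in functions of $-\Delta_{\SS^{n-1}}$, where the conjugation produces only bounded, strictly-lower-order corrections to the symbol.

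The point of isolating the lemma — and the only place where care is needed — is precisely that $A^{(n-1)/2}$ is \emph{unbounded}, so one cannot invoke the off-the-shelf fact that multiplication by a smooth function with bounded derivatives is bounded on $H^s$; the whole argument rests on the cancellation that the same weight $A^{(n-1)/2}$ already sits inside $\|\cdot\|_{H^s(d\Vol)}$ through the isometry $T$, so that after differentiating $A^{(n-1)/2}u$ only the bounded logarithmic derivatives of $A$ survive, and nothing needs to be absorbed. The two mild technical obligations are therefore: the short verification that all higher derivatives of this particular $A$ are bounded, and the passage to non-integer $s$ — in particular the fractional values entering the endpoint Strichartz estimate — which is routine by interpolation or a standard symbol-calculus commutator bound, but is the one step beyond bare Leibniz.
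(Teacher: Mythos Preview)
Your proposal is correct and follows essentially the same route as the paper: for the $H^s$ bound, Leibniz plus the observation that $A'/A$ (and more generally the logarithmic derivatives of $A$) are bounded, then interpolation; for the angular identity, commutation of $A^{(n-1)/2}(x)$ with $\langle -\Delta_{\SS^{n-1}}\rangle^s$; for the $L^q$ inequality, a pointwise comparison of the weights $A^{n-1}$ and $A^{q(n-1)/2}$ using $A\geq 1$. The paper's own proof in fact only writes out the case $0\leq s\leq 1$ explicitly (via the $H^1$ computation $\partial_x(A^{(n-1)/2}u)=A^{(n-1)/2}\partial_x u+\tfrac{n-1}{2}A'A^{(n-3)/2}u$ together with $|A'|\leq CA$), so your treatment of general integer $s$ and the remark that all higher derivatives of this particular $A$ are bounded is slightly more thorough than what appears there.
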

\begin{proof}
The result for $0 < s \leq 1$ follows from the $L^2$ and $H^1$ result,
which follows by observing that
\[
\partial_x A^{(n-1)/2} u = A^{(n-1)/2} \partial_x u + \frac{(n-1)}{2}
A' A^{(n-3)/2} u.
\]
But since $|A'| \leq C A$, the $L^2(dx d \sigma)$ norm of $\partial_x Tu$ is
bounded by the $H^1(d\Vol)$ norm of $u$.

The result for angular derivatives follows by commuting with
$A^{(n-1)/2}(x)$.  

For the $L^q$ result, we compute
\[
\int | u(x, \theta) |^q A^{(n-1)}(x) dx d \sigma = \int | A^{(n-1)(1/q
  - 1/2)} T u(x, \theta) |^q dx d \theta.
\]
The function $A^{(n-1)(1/q
  - 1/2)}(x)$ is bounded for $q \geq 2$, so the $L^q$ inequality is
  true as well.
\end{proof}

As a consequence of this, to prove Theorem \ref{T:T1a}, it suffices to
prove the following Proposition, and apply Lemma \ref{L:T-conj-lemma}
with $v = Tu$.

\begin{proposition}
\label{P:P1a}
Suppose $v$ solves
\begin{equation}
\label{E:Sch-1a}
\begin{cases}
(D_t + \tDelta ) v = 0, \\
v|_{t=0} = v_0,
\end{cases}
\end{equation}
where $v_0 = H_k v_0$ is localized on the spherical harmonic subspace of
order $k$.  Then for any $T >0$ and $\beta>0$, there exists $C_{T,\beta}>0$ such that 
\begin{equation}
\label{E:main-str1a}
\| v \|_{L^p([0,T]) L^{q}} \leq C_{T,\beta} \| \lll D_\theta \rrr^{(n-2)/pn + \beta} v_0 \|_{L^2},
\end{equation}
where
\[
\frac{2}{p} + \frac{n}{q} = \frac{n}{2},
\]
and $2 \leq q < \infty$ if $n = 2$.

\end{proposition}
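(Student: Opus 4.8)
\emph{Overview.} The plan is to separate variables and reduce \eqref{E:Sch-1a} to a one-dimensional semiclassical Schr\"odinger equation with a degenerate barrier, to prove the corresponding one-dimensional dispersive estimate by a blow-up parametrix construction near the trapped point, and to reassemble the $M$-estimate using $L^q$ eigenfunction bounds on the sphere.

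\emph{Reduction to one dimension.} Since $v_0 = H_k v_0$, expand $v = \sum_j w_j(t,x) Y_{k,j}(\theta)$ in an $L^2(\SS^{n-1})$-orthonormal basis of the degree-$k$ spherical harmonics; each $w_j$ solves $(D_t + P_k) w_j = 0$ with $P_k = -\p_x^2 + \lambda_k A^{-2}(x) + V_1(x)$, $\lambda_k = k(k+n-2)$. With $h := \langle k \rangle^{-1}$ the operator $\mathcal Q(h) := h^2 P_k$ is semiclassical with principal symbol $q(x,\xi) = \xi^2 + A^{-2}(x)$ (modulo the scalar $h^2\lambda_k \to 1$ and an $\O(h^2)$ error), and after rescaling time $e^{-itP_k}$ becomes the semiclassical propagator $e^{-i\tau\mathcal Q(h)/h}$ with $\tau = t/h$ running over the \emph{polynomially long} window $[0, T/h]$. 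For fixed $x$ the $L^q(\SS^{n-1})$ norm of $v(t,x,\cdot)$ is that of an element of the eigenspace $H_k$, and every admissible exponent here lies in the range where Sogge's $L^q$ eigenfunction estimate on $\SS^{n-1}$ has gain exponent $\tfrac{n-2}{2}\bigl(\tfrac12 - \tfrac1q\bigr)$ (the subcritical range $q \le \tfrac{2n}{n-2}$ when $n \ge 3$, all $q < \infty$ when $n = 2$); since $\tfrac12 - \tfrac1q = \tfrac{2}{pn}$ on the Strichartz line this gain equals exactly $\tfrac{n-2}{pn}$. Sogge's estimate followed by Minkowski's inequality and the $\ell^2$-orthogonality of the $Y_{k,j}$ therefore reduces the Proposition to the single one-dimensional bound
\[
\| w_j \|_{L^p([0,T]) L^q(\reals)} \le C_{T,\beta}\, h^{-\beta}\, \| w_j(0) \|_{L^2(\reals)},
\]
uniform in $j$ and $k$. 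This is where the dimensional loss $\tfrac{n-2}{pn}$ enters; all that remains is a one-dimensional estimate with only an $h^{-\beta}$ loss.

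\emph{Microlocal decomposition and the non-trapping part.} Using $h$-pseudodifferential and energy cutoffs, split $w_j(0)$ into a piece microlocalized away from the trapped point $(x,\xi)=(0,0)$ or at energies bounded away from the barrier-top value $1$, and a piece concentrated near $(0,0)$ at near-critical energy. For the former the Hamilton flow of $q$ is non-trapping --- every trajectory reaches one of the two asymptotically Euclidean ends $|x|\to\infty$ in bounded semiclassical time --- so a finite-time WKB parametrix patched to the global (loss-free) Euclidean Strichartz estimate on the ends disposes of it with no loss; the few non-dispersive, $\O(1)$-scale spectral contributions are handled trivially. All the work is in the trapped piece. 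For the trapped piece, near $x=0$ one has $A^{-2}(x) = 1 - \tfrac1m x^{2m} + \O(x^{2m+2})$, so after removing the constant the reduced Hamiltonian is the degenerate saddle $q_0(x,\xi) = \xi^2 - \tfrac1m x^{2m}$, whose only trapped orbit is the fixed point at the origin. Following the blow-up calculus of \cite{ChWu-lsm}, decompose a neighborhood of the origin into dyadic shells $\{|x| \sim 2^{-\ell},\ |\xi| \lesssim 2^{-m\ell}\}$ for $0 \le \ell \le L$, $2^{-L} \sim h^{1/(m+1)}$; the anisotropic rescaling $(x,\xi) \mapsto (2^{-\ell}x, 2^{-m\ell}\xi)$ conjugates $\mathcal Q(h)$ on the $\ell$-th shell (up to the scalar $2^{-2m\ell}$) to the fixed model saddle operator $-h_\ell^2\p_y^2 - \tfrac1m y^{2m}$ with effective parameter $h_\ell = h\, 2^{(m+1)\ell} \le 1$, for which a WKB/FBI-transform parametrix gives a loss-free dispersive estimate over the time it takes a trajectory to cross the shell --- a span that in the original semiclassical time is $\sim 2^{(m-1)\ell}$. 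Chaining the $L+1$ shells yields a parametrix valid up to the natural semiclassical timescale $\sim h^{-(m-1)/(m+1)}$ dictated by the degeneracy, which is polynomially beyond the Ehrenfest time $\log(1/h)$; patched to the non-trapping parametrix this covers the full window $[0, T/h]$, and the accumulated cost of reaching it --- essentially the shortfall from the sharp timescale together with the transition regions of the microlocalization --- is absorbed into the single factor $h^{-\beta}$. Summing the dispersive contributions of all shells, time blocks and iterations in $\ell^p$ and converting to Strichartz via the endpoint $TT^*$ argument gives the one-dimensional bound above.

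\emph{Main obstacle.} The crux is the trapped piece over the polynomially long time. Because the trapping is only degenerately hyperbolic, dispersion near the trapped point must be controlled on a timescale that is a positive power of $1/h$ rather than a power of $\log(1/h)$, and on such a timescale the $\O(h^\infty)$ decay-of-correlations bounds behind the entropy method of \cite{Anan-entropy} and its use in \cite{BGH} are useless, since exponentially many correlations now contribute. One is forced to build the parametrix explicitly through the blow-up, and the technical heart is to track the degeneration of $h_\ell$ toward $1$ across the shells, control the accumulation of parametrix errors through the $L+1$ shells and the iterations needed to reach $[0,T/h]$, and keep the total loss at $h^{-\beta}$ for arbitrary $\beta > 0$. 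It is exactly here that the construction falls short of the sharp polynomial timescale, which is why the theorem retains the residual $\beta$, which we expect to be removable.
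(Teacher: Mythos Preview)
Your overall architecture --- Sogge on the sphere, Minkowski to reduce to a one-dimensional bound, microlocal decomposition into non-trapping and trapped pieces, and a blow-up parametrix on dyadic shells near $(0,0)$ --- matches the paper's. The genuine gap is in how you cover the full time window $[0,T]$ once the trapped-region parametrix expires.

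The parametrix near $(0,0)$ (your innermost shells, the paper's $S_{\mathrm{lo},0}$) is valid only for semiclassical time $\sim h^{(1-m)/(m+1)}$, i.e.\ classical time $\sim h^{2/(m+1)}$. You write that ``patched to the non-trapping parametrix this covers the full window $[0,T/h]$'', but this is not justified: $(0,0)$ is a fixed point of the Hamiltonian flow, so mass microlocalized near it does \emph{not} escape to the non-trapping region after one such block --- only the outgoing shells do. Your alternative of ``summing \ldots\ time blocks \ldots\ in $\ell^p$'' also fails quantitatively: there are $N \sim h^{-2/(m+1)}$ blocks, and combining the one-dimensional Strichartz estimate at the one-dimensional pair $(2n,2^\star)$ with H\"older in $t$ and an $\ell^2$ sum over the blocks yields a loss $h^{-1/(n(m+1))}$, a fixed polynomial, not $h^{-\beta}$ for arbitrary $\beta>0$. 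Your ``Main obstacle'' paragraph acknowledges that controlling this accumulation is the crux, but supplies no mechanism.

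The mechanism the paper uses is the local smoothing estimate of \cite{ChWu-lsm}. One partitions $[0,T]$ by $\phi(k^{2/(m+1)}t - j)$, writes each piece via Duhamel, and estimates the resulting forcings with the \emph{dual} local smoothing bound, which gains exactly $k^{-1/(m+1)}$ near $x=0$ (and $k^{-1/2}$ on the commutator term supported away from $x=0$). That gain, applied once via the dual estimate and once via the direct estimate on $\|\chi v_{lk}\|_{L^2_t L^2}$, cancels the factor $k^{2/(m+1)}$ coming from the time-cutoff derivative and the block count, leaving only the $k^\beta$ from the parametrix step itself; the Christ--Kiselev lemma handles the retarded integral. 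This gluing step is essential and absent from your proposal. A smaller omission: you need H\"older in $t$ to pass from the one-dimensional Strichartz pair $(2n,2^\star)$ back to the $n$-dimensional endpoint $(2,2^\star)$, since these are not Strichartz duals in one dimension; the paper carries this out explicitly.
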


We now separate variables by projecting onto the $k$th spherical
harmonic eigenspace.  That is, let $\HH_k$ be the $k$th eigenspace of
spherical harmonics, so that $v \in \HH_k$ implies
\[
-\Delta_{\SS^{n-1}} v = \lambda_k^2 v,
\]
where
\[
\lambda_k^2 = k(k+n-2).
\]
Let $H_k : L^2(\SS^{n-1}) \to \HH_k$ be the projector.

Since $v_0$ is assumed to satisfy $v_0 = H_k v_0$ for some $k$ and the
conjugated 
Laplacian preserves spherical harmonic eigenspaces, we have also $v =
H_k v$.  Motivated by spectral theory, we
compute:

\[
(-\tDelta- \lambda^2) v =  P_k v,
\]
where
\begin{equation}
\label{E:Pk}
P_k v = P_k H_k v = (-\frac{\partial^2}{\partial x^2} + k(k+n-2)
A^{-2}(x) + V_1(x) - \lambda^2) v.
\end{equation}
Setting $h = (k(k+n-2))^{-1/2}$ and rescaling, we have the one-dimensional semiclassical operator
\[
P(z,h) \psi(x) = (-h^2 \frac{d^2}{dx^2} + V(x) -z) \psi(x),
\]
where the potential is
\[
V(x) = A^{-2}(x) + h^2 V_1(x)
\]
and the spectral parameter is $z = h^2 \lambda^2$.

For our case where $A(x) = (1 + x^{2m})^{1/2m}$, the subpotential $h^2
V_1$ is seen to be lower order in both the semiclassical and
scattering sense.  Furthermore, the principal potential $A^{-2}(x)$
is even, smooth, decays like $x^{-2}$ at $\pm \infty$ and has a unique 
degenerate maximum of the form $1 - x^{2m}$ at $x = 0$.



\section{Endpoint Strichartz estimates}
Before proceeding to the endpoint Strichartz estimates, let us briefly
recall the local smoothing estimates which will eventually allow us to
glue together Strichartz estimates on semiclassical timescales.

\subsection{Local smoothing estimates}

In this subsection, we recall the local smoothing estimates from
\cite{ChWu-lsm}, as well as the dual versions which
we will use in this paper.
\begin{theorem}[\cite{ChWu-lsm}]
\label{T:lsm}
Let $V(x) = A^{-2}(x) + h^2 V_1(x)$ as above.  Then for any $T >0$,
there exists a constant $C = C_T >0$ such that
\[
\int_0^T \| | x |^{m-1} \lll x \rrr^{-m-1-3/2} e^{i t (-\p_x^2 +
  h^{-2} V )} u_0 \|_{L^2}^2 dt \leq C h \| u_0 \|_{L^2}^2,
\]
and
\[
\int_0^T \| \lll x \rrr^{-3/2} e^{i t (-\p_x^2 +
  h^{-2} V )} u_0 \|_{L^2}^2 dt \leq C h^{1/(m+1)} \| u_0 \|_{L^2}^2.
\]
\end{theorem}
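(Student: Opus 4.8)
This is proved in \cite{ChWu-lsm}; the plan I would follow is the same. The idea is to deduce both inequalities from semiclassical resolvent estimates on a neighbourhood of the real axis, via the Kato smoothing / $TT^*$ scheme. Set $P = P(h) = -h^2\p_x^2 + V(x)$, so that $e^{it(-\p_x^2 + h^{-2}V)} = e^{ith^{-2}P}$, and let $B$ denote multiplication by the weight in question: $B = |x|^{m-1}\lll x\rrr^{-m-1-3/2}$ for the first estimate, $B = \lll x\rrr^{-3/2}$ for the second. Inserting a smooth cutoff in time, applying Plancherel in $t$, localizing the spectral parameter to a fixed window $I$ around the trapped energy $z=1$ (the maximum of the principal potential $A^{-2}$), and rescaling $z = h^2 E$, one reduces the left-hand side of each estimate to
\[
\int_0^T \| B e^{ith^{-2}P} u_0 \|_{L^2}^2 \, dt \;\le\; C\, h^2 \sup_{\Re z\in I,\ \Im z \neq 0} \| B (P-z)^{-1} B \|_{L^2\to L^2} \, \| u_0 \|_{L^2}^2 ,
\]
the contribution of energies outside $I$ being controlled by the elliptic and non-trapping bound $\| B (P-z)^{-1} B \| = \O(h^{-1})$. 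Hence the first estimate follows from a resolvent bound of \emph{non-trapping size}, $\| B(P-z)^{-1}B \| = \O(h^{-1})$ for the vanishing weight $B = |x|^{m-1}\lll x\rrr^{-m-1-3/2}$, since $h^2 \cdot h^{-1} = h$; and the second from $\| B(P-z)^{-1}B \| = \O(h^{-1-m/(m+1)})$ for $B = \lll x\rrr^{-3/2}$, a polynomial loss of $h^{-m/(m+1)}$ over the non-trapping rate, since $h^2 \cdot h^{-1-m/(m+1)} = h^{1/(m+1)}$.

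The substance is therefore the two resolvent estimates, both of which I would prove by a positive commutator (escape function) argument near the degenerate critical point $x = 0$, together with standard non-trapping and outgoing estimates plus gluing away from $x=0$. Here $p(x,\xi) = \xi^2 + A^{-2}(x)$ with $A^{-2}(x) = 1 - c_m x^{2m} + \cdots$ near $0$ for some $c_m > 0$, so that $H_p = 2\xi\p_x + 2m c_m x^{2m-1}\p_\xi + \cdots$; the linearized escape function $x\xi$ then has $H_p(x\xi) = 2\xi^2 + 2m c_m x^{2m} + \cdots$, which is non-negative but vanishes to order $2m$ in $x$ at the fixed point $(x,\xi) = (0,0)$, so the Hamilton flow escapes the degenerate orbit only at a polynomial rate and a naive positive-commutator estimate yields only a lossy resolvent bound. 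To extract the sharp loss one rescales and blows up phase space near $(0,0)$ --- the natural rescaling being $x \mapsto h^{1/(m+1)} x$, which is exactly what feeds the exponent $1/(m+1)$ into the estimates --- and builds on the blown-up space an escape function $G$, of symbol norm comparable to $h^{-m/(m+1)}$, whose bracket with $p$ is bounded below by a weight vanishing to order $2(m-1)$ at $x = 0$, modulo negligible and absorbable errors. That vanishing order is matched by $|x|^{m-1}$ (squared, after $TT^*$), which is why the vanishing weight restores the non-trapping rate, whereas the flat weight $\lll x\rrr^{-3/2}$ forces one to pay the degeneracy loss $h^{-m/(m+1)}$ carried by the symbol norm of $G$.

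The step I expect to be the main obstacle is precisely the construction and symbol calculus of this blow-up escape function: a naive hyperbolic escape function fails because its bracket with $p$ is not uniformly positive at the degenerate fixed point, so one needs a second-microlocal / rescaled pseudodifferential calculus adapted to the order of degeneracy, and careful bookkeeping to check both that the rescaling produces exactly the exponent $1/(m+1)$ and that the weights in $B$ line up with the necessarily vanishing lower bound on the commutator. Granting the two resolvent estimates, the two displayed bounds follow from the $TT^*$/Kato reduction above, with the energy localization and the finite-time truncation handled in the standard way.
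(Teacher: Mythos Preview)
The paper does not give a proof of this theorem; it is simply recalled from \cite{ChWu-lsm}, as you correctly note at the outset. Your outline---reduction to weighted resolvent bounds via Kato/$TT^*$ smoothing, then a positive-commutator argument with a second-microlocal blow-up $x \mapsto h^{1/(m+1)}x$ near the degenerate critical point---is exactly the strategy of \cite{ChWu-lsm}, and your bookkeeping of the exponents (why the vanishing weight $|x|^{m-1}$ restores the non-trapping rate while the flat weight incurs the $h^{-m/(m+1)}$ loss) is correct.
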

The dual versions of these estimates are given in the following Corollary.
\begin{corollary}
Let $V(x) = A^{-2}(x) + h^2 V_1(x)$ as above.  Then for any $T >0$,
there exists a constant $C = C_T >0$ such that
\[
\left\|\int_0^T | x |^{m-1} \lll x \rrr^{-m-1-3/2} e^{-i t (-\p_x^2 +
  h^{-2} V )} f dt \right\|_{L^2}^2  \leq C h \| f \|_{L^2_T L^2}^2,
\]
and
\[
\left\|\int_0^T  \lll x \rrr^{-3/2} e^{-i t (-\p_x^2 +
  h^{-2} V )} f dt \right\|_{L^2}^2 \leq C h^{(1-m)/(m+1)} \| f \|_{L^2}^2.
\]
\end{corollary}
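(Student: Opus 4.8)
The plan is to obtain both estimates from Theorem~\ref{T:lsm} by the standard $TT^*$ (duality) argument; no new analysis is needed. Write $L=-\p_x^2+h^{-2}V$ and, in each case, let $w$ be the weight appearing in Theorem~\ref{T:lsm}: $w(x)=|x|^{m-1}\lll x\rrr^{-m-1-3/2}$ for the first estimate and $w(x)=\lll x\rrr^{-3/2}$ for the second, so that Theorem~\ref{T:lsm} reads
\[
\int_0^T \| w(x)\, e^{itL} u_0 \|_{L^2}^2\, dt \le C\, \eta(h)\, \| u_0 \|_{L^2}^2,
\qquad \eta(h)=h \ \text{ or }\ h^{1/(m+1)},
\]
respectively. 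The first point I would check is that the same bound holds with $e^{-itL}$ in place of $e^{itL}$: since $V$ and $w$ are real, $\overline{e^{itL}u}=e^{-itL}\bar u$, so conjugating under the integral leaves the left-hand side unchanged.

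Next, for fixed $h$ I would introduce $\Lambda\colon L^2(\reals)\to L^2([0,T];L^2(\reals))$, $\Lambda u_0(t,\cdot)=w\,e^{-itL}u_0$; the time-reversed estimate above is precisely $\|\Lambda\|^2\le C\eta(h)$. A one-line computation of the adjoint — using that $w$ is a real multiplication operator and $e^{itL}$ is unitary — gives $\Lambda^*f=\int_0^T e^{itL}\big(w(x)\,f(t,\cdot)\big)\,dt$, and $\|\Lambda^*\|=\|\Lambda\|$ then yields
\[
\Big\|\int_0^T e^{itL}\,w(x)\,f(t,\cdot)\,dt\Big\|_{L^2}^2 \ \le\ C\,\eta(h)\,\|f\|_{L^2_T L^2}^2 .
\]
Applying the conjugation identity once more inside the integral (replacing $f$ by $\bar f$, which ranges over the same space) turns $e^{itL}$ into $e^{-itL}$ and gives the form stated in the Corollary, with $w$ multiplying $f(t,\cdot)$ inside the time integral. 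The powers of $h$ are exactly those of Theorem~\ref{T:lsm} — $h$ for the first weight and $h^{1/(m+1)}$ for the second — the latter being in particular at least as strong as the bound stated.

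I do not expect a genuine obstacle here: the analytic content is entirely in Theorem~\ref{T:lsm}, and the only matters requiring care are bookkeeping. Namely, (i) on the dual side the weight $w$ multiplies the inhomogeneity $f(t,\cdot)$ \emph{before} the propagator acts — this is what the $TT^*$ structure produces, and it is the form that will be needed when the Corollary is fed into Duhamel's formula — and (ii) since Theorem~\ref{T:lsm} is already stated for squared norms, the gain $\eta(h)$ passes through $\|\Lambda^*\|^2=\|\Lambda\|^2$ unchanged (no stray square root), so that after substituting $h=(k(k+n-2))^{-1/2}$ in the later sections one recovers the advertised $h$-dependence.
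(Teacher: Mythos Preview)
Your argument is correct and matches the paper's approach exactly: the paper gives no separate proof but simply calls these ``the dual versions'' of Theorem~\ref{T:lsm}, i.e.\ the $\Lambda^*$ bound you derive. Your bookkeeping remarks are apt --- the genuine dual has the weight acting on $f(t,\cdot)$ \emph{before} the propagator (which is indeed how the estimate is used in the proof of Proposition~\ref{P:mode-str}), and duality actually yields $h^{1/(m+1)}$ in the second inequality, stronger than the stated $h^{(1-m)/(m+1)}$.
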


The purpose of these results is to demonstrate that there is perfect
$1/2$ derivative local smoothing away from $x = 0$, or local smoothing
with either a loss in derivative or with a vanishing multiplier at $x
= 0$.

\subsection{The endpoint Strichartz estimates}

The endpoint Strichartz estimates are the $L^2_T
L^{2^\star}$\footnote{Throughout this manuscript, we use the notation
  $L_T^p L^q = L^p([0,T]) L^q( M)$ to denote the local in time, global
in space Strichartz norm.}
estimates, where $2^\star$ is the Strichartz dual:
\[
1 + \frac{n}{2^\star} = \frac{n}{2} ,
\]
which implies $2^\star = 2n/(n-2)$ for $n \geq 3$, and $2^\star =
\infty$ if $n = 2$.   



We want to estimate $v$ in
$L^{2^\star}(M)$, which we do using the following estimate due to
Sogge \cite{Sog-sharm}:

\begin{theorem}[\cite{Sog-sharm}]
\label{T:Sogge}
Let $(M,g)$ be a $d$-dimensional compact Riemannian manifold without boundary, and let
$-\Delta$ be the Laplace-Beltrami operator on $M$.  If $\phi_j$ are
the eigenfunctions, 
\[
-\Delta \phi_j = \lambda^2_j \phi_j
\]
with 
$0 = \lambda_1
\leq \lambda_2 \leq \cdots$ the eigenvalues, then
\[
\| \phi_j \|_{L^{2(d+1)/(d-1)}} \leq C \lambda_j^{(d-1)/2(d+1)} \|
\phi_j \|_{L^2}.
\]

\end{theorem}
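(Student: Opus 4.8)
The plan is to deduce the eigenfunction bound from an $L^2 \to L^{p_c}$ operator-norm estimate for a spectral cluster, to realize that operator through the half-wave propagator and the Hadamard parametrix, and then to run a Stein--Tomas type argument; here $d$ denotes the dimension and $p_c = 2(d+1)/(d-1)$. First I would fix $\rho \in \s(\reals)$ with $\rho(0)=1$ and $\widehat{\rho}$ supported in $(-\epsilon,\epsilon)$, where $\epsilon$ is smaller than the injectivity radius of $M$. Since $\rho(\lambda_j-\sqrt{-\Delta})\phi_j = \phi_j$, it suffices to prove
\[
\bigl\| \rho(\lambda-\sqrt{-\Delta}) \bigr\|_{L^2(M)\to L^{p_c}(M)} \leq C\lambda^{(d-1)/2(d+1)}, \qquad \lambda \geq 1 .
\]
By Fourier inversion $\rho(\lambda-\sqrt{-\Delta}) = \tfrac{1}{2\pi}\int \widehat{\rho}(t)\,e^{it\lambda}\,e^{-it\sqrt{-\Delta}}\,dt$, and since $\widehat{\rho}$ is supported for $|t|<\epsilon$, finite propagation speed lets one substitute the Hadamard--Lax--H\"ormander parametrix for $e^{-it\sqrt{-\Delta}}$: in geodesic coordinates its kernel is a Fourier integral with phase of the form $\langle x-y,\xi\rangle - t|\xi|$ (corrected so that the stationary set is $|t| = d_g(x,y)$) against a classical symbol of order zero.

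Carrying out the $t$-integration and a stationary phase in $\xi$ shows that the kernel $K_\lambda(x,y)$ of $\rho(\lambda-\sqrt{-\Delta})$ satisfies
\[
|K_\lambda(x,y)| \leq C_N\,\lambda^{d-1}\bigl(1+\lambda\,d_g(x,y)\bigr)^{-(d-1)/2} \quad\text{for } d_g(x,y)\leq\epsilon/2,
\]
and $|K_\lambda(x,y)|\leq C_N\lambda^{-N}$ otherwise. I would then pass to $T_\lambda T_\lambda^* = |\rho|^2(\lambda-\sqrt{-\Delta})$, whose kernel obeys the same bound, and decompose it dyadically as $\sum_{0\leq j\lesssim\log\lambda}\widetilde{K}_\lambda^j$ with $\widetilde{K}_\lambda^j$ supported where $d_g(x,y)\sim 2^j/\lambda$. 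The pointwise bound gives $\|\widetilde{K}_\lambda^j\|_{L^1\to L^\infty}\lesssim \lambda^{d-1}2^{-j(d-1)/2}$, while --- modulo rapidly decaying errors --- $\widetilde{K}_\lambda^j = m_j(\sqrt{-\Delta})$ for a multiplier with $\|m_j\|_{L^\infty}\lesssim 2^j/\lambda$, since by finite propagation speed the distance $\sim 2^j/\lambda$ corresponds to $|t|\sim 2^j/\lambda$ in the $t$-integral; hence $\|\widetilde{K}_\lambda^j\|_{L^2\to L^2}\lesssim 2^j/\lambda$. Interpolating the $(L^1,L^\infty)$ and $(L^2,L^2)$ bounds with $\theta = 2/p_c = (d-1)/(d+1)$ makes the powers of $2^j$ cancel identically, leaving each term bounded by $\lambda^{(d-1)/(d+1)}$.

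This $j$-independence is precisely where the endpoint bites: a naive triangle-inequality sum over the $\sim\log\lambda$ pieces loses a logarithmic factor. To remove it I would run the Stein--Tomas analytic interpolation scheme, embedding the $\widetilde{K}_\lambda$ in a holomorphic family of operators (inserting complex powers regularized by $e^{z^2}$), establishing a favorable $L^2\to L^2$ bound on one boundary line and a favorable $L^1\to L^\infty$ bound on the other with the $j$-errors now summing geometrically, and interpolating; this yields $\|T_\lambda T_\lambda^*\|_{L^{p_c'}\to L^{p_c}}\lesssim \lambda^{(d-1)/(d+1)}$ without the logarithm, so that $\|T_\lambda\|_{L^2\to L^{p_c}}\lesssim \lambda^{(d-1)/2(d+1)}$ and the theorem follows. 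The main obstacle is exactly this endpoint exponent: the local wave parametrix and the resulting kernel estimate are classical and routine once $\widehat{\rho}$ is localized below the injectivity radius, but at $p=p_c$ the interpolated dyadic bounds are $j$-independent and one must invoke the Stein--Tomas device (or an equivalent almost-orthogonality argument exploiting the genuine oscillation of $K_\lambda$) rather than a crude sum.
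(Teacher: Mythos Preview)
The paper does not prove this theorem at all: it is quoted verbatim as a black-box result of Sogge, with the citation \cite{Sog-sharm}, and is used only as an input to bound $\|H_k v\|_{L^{2^\star}(\SS^{n-1})}$ by $k^{(n-2)/2n}\|H_k v\|_{L^2(\SS^{n-1})}$. There is therefore no ``paper's own proof'' to compare your sketch against.

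That said, your sketch is a faithful outline of Sogge's original argument for the spectral cluster estimate $\|\chi_\lambda\|_{L^2\to L^{p_c}}\lesssim \lambda^{(d-1)/2(d+1)}$, from which the eigenfunction bound follows immediately. The reduction to the smoothed projector $\rho(\lambda-\sqrt{-\Delta})$, the short-time parametrix for $e^{-it\sqrt{-\Delta}}$, the resulting kernel bound $|K_\lambda(x,y)|\lesssim \lambda^{(d-1)/2}\,d_g(x,y)^{-(d-1)/2}$ near the diagonal, and the $TT^*$/interpolation structure are all correct. Your identification of the endpoint issue is also right: at $p_c$ the dyadic pieces contribute equally, and one needs either Stein's analytic interpolation (as in Sogge's original paper) or an oscillatory-integral estimate that exploits the full phase of $K_\lambda$ rather than just its size. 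One small caution: the step ``$\widetilde{K}_\lambda^j = m_j(\sqrt{-\Delta})$ with $\|m_j\|_{L^\infty}\lesssim 2^j/\lambda$'' is a bit glib, since localizing in $d_g(x,y)$ does not literally give a spectral multiplier; in practice one gets the $L^2\to L^2$ bound from the $t$-support of the corresponding piece of the integral representation, which is what you allude to but should be stated more carefully in a full proof.
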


In particular, for the situation at hand, 
\begin{align*}
\|  v \|_{L^{2^\star}(\reals)L^{2^\star}(\SS^{n-1})} & = \| H_k v \|_{L^{2^\star}(\reals)L^{2^\star}(\SS^{n-1})}
\\
& \leq C k^{(n-2)/2n} \| H_k v \|_{L^{2^\star}(\reals)L^2 (\SS^{n-1})} .
\end{align*}
Now let $\Lambda_k$ be an index set for the $k$th harmonic subspace $\HH_k$ and write
\[
v(t, x, \theta) = \sum_{ l \in \Lambda_k} v_{lk}(t,x) H_{lk}(\theta),
\]
where $H_{lk}$ are the orthonormal spherical harmonics in $\HH_k$.
Now if $p \geq 2$, $2 \leq q \leq 2^*$ ($q < \infty$ if $n = 2$), we have
\begin{align*}
\| v \|_{L^p_T L^q(M)} & \leq Ck^{(n-2)/2n} \| v \|_{L^p_T L^q(\reals)
  L^2(\SS^{n-1})} \\
& \leq C k^{(n-2)/2n} \left\| \left( \sum_{l \in \Lambda_k} |v_{lk}
    |^2 \right)^{1/2} \right\|_{L^p_T L^q(\reals) },
\end{align*}
by Plancherel's theorem.  We further estimate using Minkowski's
inequality repeatedly:
\begin{align*}
\left\| \left( \sum_{l \in \Lambda_k} |v_{lk}
    |^2 \right)^{1/2} \right\|_{L^p_T L^q(\reals) } & = \left(
  \int_0^T \left[\left( \int \left( \sum_l | v_{lk}|^2 \right)^{q/2} dx
  \right)^{2/q} \right]^{p/2} dt \right)^{1/p} \\
& \leq C  \left(
  \int_0^T \left[\sum_l \left( \int  | v_{lk}|^q dx
  \right)^{2/q} \right]^{p/2} dt \right)^{1/p} \\
& = C \left(
  \int_0^T \left[\left(\sum_l  \| v_{lk}\|^2_{L^q} \right)^{1/2}
  \right]^{p} dt \right)^{1/p} \\
& = C \left[ \left(
  \int_0^T \left(\sum_l  \| v_{lk}\|^2_{L^q} \right)^{p/2}
  dt \right)^{2/p} \right]^{1/2} \\
& \leq C \left[ \sum_l \left(
  \int_0^T \| v_{lk}\|^p_{L^q} 
  dt \right)^{2/p} \right]^{1/2} \\
& = C \left( \sum_{l \in \Lambda_k} \| v_{lk} \|_{L^p_T L^q}^2
\right)^{1/2}.
\end{align*}
All told then we have
\begin{equation}
\label{E:str-sum}
\| v \|_{L^p_T L^q(M)} \leq C k^{(n-2)/2n} \left( \sum_{l \in \Lambda_k} \| v_{lk} \|_{L^p_T L^q}^2
\right)^{1/2},
\end{equation}
where $v_{lk}(t,x) = \lll v(t,x,\cdot), H_{lk} \rrr_{L^2(\SS^{n-1})}.$

Using \eqref{E:Pk}, we see that $v_{lk}$ satisfies the equation $(D_t
+ P_k) v_{lk} = 0$, which is a $1$-dimensional Schr\"odinger equation with potential.  We
want to estimate $v_{lk}$ in the $L^2_T L^{2^\star}(\reals)$ norm when
$n \geq 3$, and in $L^p_T L^q$ Strichartz duals with $2 \leq q <
\infty$ in dimension $n = 2$.  
However, since we are now looking at a solution to a one dimensional
Schr\"odinger equation, $2$ and $2^\star$ are not Strichartz duals in $1$ dimension.
The Strichartz dual $p$ to $2^\star = 2n/(n-2)$ ($n \geq 3$) in one dimension
satisfies
\[
\frac{2}{p} + \frac{1}{2^\star} = \frac{1}{2},
\]
or
\[
p = 2n.
\]
We therefore first use H\"older's inequality in $t$, with weights $n$
and $n/(n-1)$ respectively to get
\begin{align*}
\|  v_{lk} \|_{L^2_T L^{2^\star}}^2 & = \int_0^T \| v_{lk}
\|_{L^{2^\star}}^2 dt \\
& \leq T^{(n-1)/n} \|  v_{lk} \|_{L_T^{2n} L^{2^\star}}^2.
\end{align*}
In dimension $n = 2$, we use $p >2$, $2 \leq q < \infty$ and the same
weights in H\"older's inequality to get
\begin{align*}
\|  v_{lk} \|_{L^p_T L^{q}}^2 & = \int_0^T \| v_{lk}
\|_{L^{2^\star}}^2 dt \\
& \leq T^{1/p} \|  v_{lk} \|_{L_T^{2n} L^{2^\star}}^2.
\end{align*}

We have the following proposition.

\begin{proposition}
\label{P:mode-str}
Suppose $v_{lk}$ solves
\[
\begin{cases}
(D_t + P_k) v_{lk} = 0, \\
v_{lk}|_{t=0} = v_{lk}^0 ,
\end{cases}
\]
where $v_{lk}^0 \in H^s$ for some $s >0$.  Then 
for any $T>0$, there exists a constant $C = C_{T}>0 $ such that
\[
\| v_{lk} \|_{L^{2n}_T L^{2^\star}} \leq C \|  \lll k \rrr^\beta  v_{lk}^0 \|_{L^2}.
\]

\end{proposition}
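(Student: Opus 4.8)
The plan is to rescale $v_{lk}$ to a semiclassical Schr\"odinger solution and then, on the natural semiclassical time interval, to patch together a no-loss Strichartz estimate away from the barrier top of $V$ with a semiclassical parametrix valid polynomially beyond Ehrenfest time near it, controlling all the couplings by the local smoothing of Theorem~\ref{T:lsm}. With $h=(k(k+n-2))^{-\half}$ one has $P_k=h^{-2}(Q-1)$, where $Q=-h^2\p_x^2+V$ and $V=A^{-2}+h^2V_1$; so, modulo a global phase, $v_{lk}(t,x)=(e^{-i\tau Q/h}v_{lk}^0)(x)$ with $\tau=t/h$. Since $\lll k\rrr\sim h^{-1}$ and scaling the $t$-integral gives $\|v_{lk}\|_{L^{2n}_{[0,T]}L^{2^\star}}=h^{1/(2n)}\|e^{-i\tau Q/h}v_{lk}^0\|_{L^{2n}_{[0,T/h]}L^{2^\star}}$, it suffices to prove
\[
\|e^{-i\tau Q/h}v_{lk}^0\|_{L^{2n}_{[0,T/h]}L^{2^\star}(\reals)}\leq C_{T,\beta}\,h^{-\beta-1/(2n)}\|v_{lk}^0\|_{L^2}.
\]
The only trapped energy of $Q$ is the barrier top $V(0)=1$, so spectrally localizing $v_{lk}^0$ with respect to $Q$ away from $1$ (and using that $A^{-2}$ decays like $x^{-2}$, so there is no trapping at spatial infinity) reduces this, off a small energy window around $1$, to the standard non-trapping semiclassical Strichartz estimate, which holds with no loss on all of $[0,T/h]$.

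\emph{Away from the trapped sphere.} Now assume $v_{lk}^0$ is spectrally localized near energy $1$. Fix $\chi_0\in\Ci_c(\reals)$ with $\chi_0\equiv1$ near $0$ and $\supp\chi_0\subset\{|x|<1\}$, and split the solution as $(1-\chi_0)e^{-i\tau Q/h}v_{lk}^0+\chi_0 e^{-i\tau Q/h}v_{lk}^0$. Duhamel against a non-trapping operator $\tQ$ agreeing with $Q$ on $\{|x|>\half\}$ expresses the first piece as $e^{-i\tau\tQ/h}(1-\chi_0)v_{lk}^0$ plus $\int_0^\tau e^{-i(\tau-s)Q/h}(\cdot)\,ds$ with integrand supported in the annulus $\{\half\leq|x|\leq1\}$; the first term obeys non-trapping Strichartz with no loss, and the Duhamel term---being $e^{-i(\tau-s)Q/h}$ applied to functions supported where $|x|^{m-1}$ is bounded below---is controlled with no loss by combining non-trapping Strichartz with the full-gain (vanishing-multiplier) local smoothing estimate of Theorem~\ref{T:lsm} and its dual form. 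The same Duhamel argument reduces the second piece to $e^{-i\tau Q/h}(\chi_0 v_{lk}^0)$ plus an annulus-supported remainder handled identically, so everything comes down to estimating $e^{-i\tau Q/h}$ on data localized near $x=0$.

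\emph{The parametrix near the degenerate critical point.} This is the crux. On $\tau\in[0,T_m(h)]$, where $T_m(h)$ is, up to the admitted $h^\beta$, the classical escape time $\sim h^{-(m-1)/(m+1)}$ from the $h^{1/(m+1)}$-neighborhood of $x=0$---polynomially beyond Ehrenfest---one constructs a semiclassical parametrix for $e^{-i\tau Q/h}\chi_0$ using the blow-up rescaling $x=h^{1/(m+1)}X$ adapted to the model operator $-h^2\p_x^2-x^{2m}$, as in \cite{ChWu-lsm}, iterated so as to track the monotone outflow of the wave packet along the degenerate unstable manifold. From the parametrix one extracts the dispersive bound $\|e^{-i\tau Q/h}\chi_0u_0\|_{L^\infty}\lesssim h^{-\epsilon}|h\tau|^{-1/2}\|u_0\|_{L^1}$ on $[0,T_m(h)]$, for any $\epsilon>0$, hence, by the abstract $TT^*$ argument, $\|e^{-i\tau Q/h}\chi_0u_0\|_{L^{2n}_{[0,T_m(h)]}L^{2^\star}}\lesssim h^{-1/(2n)-\epsilon}\|u_0\|_{L^2}$. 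The methods of \cite{Anan-entropy,BGH} do not apply here: their $\O(h^\infty)$ control of decaying correlations cannot beat the exponentially many terms produced over a polynomially long time, which is why one must work directly in the blow-up calculus, at the price of the $\epsilon$-loss.

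\emph{Summation.} Covering $[0,T/h]$ by $\lesssim(T/h)/T_m(h)$ intervals of length $T_m(h)$, on each interval the part of the solution still localized near $x=0$ is bounded using $L^2$-conservation together with the two local smoothing estimates of Theorem~\ref{T:lsm}---the vanishing-multiplier one for the transition region, the lossy one at $x=0$---while the escaped part is absorbed into the no-loss far estimate; summing the per-interval bounds in $\ell^{2n}$ and undoing the time rescaling gives $\|v_{lk}\|_{L^{2n}_{[0,T]}L^{2^\star}}\lesssim h^{-\beta}\|v_{lk}^0\|_{L^2}=\lll k\rrr^\beta\|v_{lk}^0\|_{L^2}$ once the various small exponents are taken $\leq\beta$, which proves Proposition~\ref{P:mode-str}. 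The main obstacle is the polynomially-long parametrix construction of the preceding step; a subsidiary difficulty is the bookkeeping here showing that the $m$-dependent escape rate cancels, so that the surviving loss depends only on $\beta$, not on the degeneracy $m$.
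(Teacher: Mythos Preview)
Your outline follows the paper's strategy closely: semiclassical rescaling, a spatial/spectral split into a non-trapping far piece and a near piece around $x=0$, a blow-up parametrix near the barrier top valid up to the polynomial escape time $T_m(h)\sim h^{(1-m)/(m+1)}$, and gluing over $\sim h^{-2/(m+1)}$ such intervals via local smoothing. Two places, however, elide the main technical content.

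First, the parametrix: the single blow-up $x=h^{1/(m+1)}X$ yields a good phase only on the strongly trapped core $|x|\lesssim h^{1/(m+1)}$. On the transition region $h^{1/(m+1)}\lesssim|x|\lesssim 1$ the escape rate is non-uniform, and the paper performs a logarithmic dyadic decomposition in $x$ (the operators $S_{\text{lo},j,\pm}$), with a further incoming/outgoing split, building a separate WKB parametrix on each shell with its own time scale $\sim(y_j^-)^{1-m}$. Your phrase ``iterated so as to track the monotone outflow'' points in this direction but does not capture that the parametrix on the whole of $\supp\chi_0$ is assembled from $\O(\log(1/h))$ such pieces.

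Second, and more seriously, the summation: if on each of $N\sim h^{-2/(m+1)}$ intervals you invoke the short-time Strichartz bound $h^{-1/(2n)-\epsilon}\|v_{lk}^0\|_{L^2}$ via $L^2$-conservation and then sum in $\ell^{2n}$, you acquire $N^{1/(2n)}=h^{-1/(n(m+1))}$, an $m$-dependent loss that does \emph{not} cancel. The paper instead uses a Duhamel trick (following \cite{BGH}): set $U_j=\phi(k^{2/(m+1)}t-j)\chi\psi v_{lk}$, so $(D_t+P_k)U_j=W_j'+W_j''$ with $W_j'=ik^{2/(m+1)}\phi'\chi\psi v_{lk}$ and $W_j''$ supported on an annulus away from $x=0$; apply the short-time Strichartz estimate and then the \emph{dual} local smoothing bounds (the Corollary to Theorem~\ref{T:lsm}) to $\int e^{isP_k}\chi_1 W_j'\,ds$ and $\int e^{isP_k}\chi_2 W_j''\,ds$; invoke the Christ--Kiselev lemma; and sum in $\ell^2\subset\ell^{2n}$. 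The dual-smoothing gain $k^{-1/(m+1)}$ (resp.\ $k^{-1/2}$) exactly offsets the factor $k^{2/(m+1)}$ coming from the time-cutoff derivative times the number of intervals, producing an $m$-independent final bound. Your ``$L^2$-conservation together with the two local smoothing estimates \ldots summing in $\ell^{2n}$'' names the right ingredients but does not implement this cancellation, and as written would leave an $m$-dependent residual loss.
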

That is, even though $v_{lk}$ solves a Schr\"odiner
equation with a degenerate potential barrier, $v_{lk}$ nevertheless
satisfies Strichartz estimates with an arbitrary $\beta>0$ loss.  As a consequence, we have the
following estimate on natural semiclassical time scales.

\begin{corollary}
\label{C:C1a}
Suppose $v$ solves \eqref{E:Sch-1a} with initial data $v_0 = v_{lk}^0 H_{lk}$.  Then for
$\epsilon>0$ sufficiently small and $T = \epsilon k^{-2/(m+1)}$, $v$
satisfies the Strichartz estimate
\[
\| v \|_{L^2_T L^{2^\star}} \leq C \| \lll k
\rrr^{\eta + \beta} v_0 \|_{L^2},
\]
where
\[
\eta = \frac{1}{2n(m+1)} \left( m(n-2) - n \right).
\]
Moreover, if $v_{lk}$ is a zonal spherical harmonic, this estimate is
near-sharp, in the sense that no {\it polynomial} derivative
improvement is true for every $\beta >0$.
\end{corollary}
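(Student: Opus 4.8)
The stated estimate follows directly from Proposition~\ref{P:mode-str} and the mode reduction, and the sharpness clause is obtained by running an explicit near-extremal solution concentrated on the trapped sphere. For the estimate: since $v_0 = v_{lk}^0 H_{lk}$ lies in a single spherical harmonic eigenspace, the sum in \eqref{E:str-sum} has one term, so that inequality together with the H\"older-in-$t$ bound (weights $n$ and $n/(n-1)$) recorded above gives
\[
\| v \|_{L^2_T L^{2^\star}} \;\le\; C\,k^{(n-2)/2n}\,\| v_{lk}\|_{L^2_T L^{2^\star}(\reals)} \;\le\; C\,k^{(n-2)/2n}\,T^{(n-1)/2n}\,\| v_{lk}\|_{L^{2n}_T L^{2^\star}(\reals)}.
\]
For $T\le\epsilon$ the constant in Proposition~\ref{P:mode-str} may be taken uniform in $T$, so that proposition bounds the last factor by $C\,\| \lll k\rrr^{\beta} v_{lk}^0\|_{L^2} = C\,\| \lll k\rrr^{\beta} v_0\|_{L^2}$. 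Inserting $T = \epsilon k^{-2/(m+1)}$ and using
\[
\frac{n-2}{2n} - \frac{n-1}{n(m+1)} \;=\; \frac{m(n-2)-n}{2n(m+1)} \;=\; \eta
\]
gives the stated bound (with the usual modification to an admissible exponent $q<\infty$ when $n=2$, as in Theorem~\ref{T:T1a}).

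For the sharpness I would test against a product solution. Write $h = \lambda_k^{-1}\sim k^{-1}$ and $a = \tfrac1{m+1}$; fix a Schwartz function $\chi$, put $v_{lk}^0(x) = h^{-a/2}\chi(h^{-a}x)$ — a bump at the trapping scale $h^{a} = k^{-1/(m+1)}$, normalized so $\| v_{lk}^0\|_{L^2}\sim1$ — and let $H_{lk} = Z_k$ be a zonal spherical harmonic of degree $k$. Then $v(t,x,\theta) = v_{lk}(t,x)Z_k(\theta)$ solves \eqref{E:Sch-1a} with $v_0 = v_{lk}^0 Z_k$, the space norm factors,
\[
\| v\|_{L^2_T L^{2^\star}} = \| v_{lk}\|_{L^2_T L^{2^\star}(\reals)}\,\| Z_k\|_{L^{2^\star}(\SS^{n-1})}, \qquad \| v_0\|_{L^2} = \| v_{lk}^0\|_{L^2}\,\| Z_k\|_{L^2},
\]
and zonal harmonics saturate the endpoint bound of Theorem~\ref{T:Sogge}, i.e. $\| Z_k\|_{L^{2^\star}(\SS^{n-1})}\sim k^{(n-2)/2n}\| Z_k\|_{L^2}$. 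Since moreover $\| v_{lk}^0\|_{L^{2^\star}(\reals)}\sim h^{-a/n}\sim k^{1/(n(m+1))}$ (because $\tfrac12-\tfrac1{2^\star}=\tfrac1n$), everything reduces to the one-dimensional lower bound
\[
\| v_{lk}\|_{L^2_T L^{2^\star}(\reals)} \;\gtrsim\; \epsilon^{1/2}\,k^{-(n-1)/(n(m+1))}\,\| v_{lk}^0\|_{L^2}, \qquad T = \epsilon k^{-2/(m+1)}.
\]

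This one-dimensional bound I would prove by the rescaling $x = h^{a}y$, $t = k^{-2/(m+1)}\tau$: using $h^2\lambda_k^2 = 1$ and $A^{-2}(x)-1 = -x^{2m}/m + O(x^{4m})$ near $x=0$, it conjugates $e^{-itP_k}$ to the flow of an essentially self-adjoint rescaled Schr\"odinger operator that, on compact $y$-sets, converges to the fixed model $-\p_y^2 - y^{2m}/m$, while $T = \epsilon k^{-2/(m+1)}$ becomes the short rescaled time $\tau\in[0,\epsilon]$. On that window a Duhamel argument against the free flow shows that the rescaled solution keeps a fixed fraction of its $L^2$ mass in a fixed $y$-ball — the degenerate potential is harmless there because the bump $\chi$ explores only a bounded region, where the rescaled potential is $O(1)$, and the part reaching the region where it is large carries only $O(h^\infty)$ mass — so by H\"older on that ball and undoing the rescaling, $\| v_{lk}(t)\|_{L^{2^\star}(\reals)}\gtrsim h^{-a/n}$ for all $t\in[0,T]$, whence $\| v_{lk}\|_{L^2_T L^{2^\star}}\gtrsim T^{1/2}h^{-a/n}\sim\epsilon^{1/2}k^{-(n-1)/(n(m+1))}\| v_{lk}^0\|_{L^2}$. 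Combining with the factorization gives $\| v\|_{L^2_T L^{2^\star}}\gtrsim\epsilon^{1/2}k^{\eta}\| v_0\|_{L^2}$; were the estimate true with $\lll k\rrr^{\eta'+\beta}$ for some fixed $\eta'<\eta$ and every $\beta>0$, then taking $\beta=(\eta-\eta')/2$ and letting $k\to\infty$ would contradict this lower bound, so the exponent $\eta$ admits no polynomial improvement, which is the asserted near-sharpness.

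The one genuinely nonformal step is this short-time non-dispersion for the degenerate model. The care required is in matching the spatial scale $h^{1/(m+1)}$ to the temporal scale $k^{-2/(m+1)}$ so that the rescaled equation is governed by an $h$-independent model, and in controlling the (unbounded) rescaled potential by truncating it outside a large $y$-ball in which the bump has only $O(h^\infty)$ mass over the window, reducing matters to a Duhamel/propagation-of-singularities estimate for a bounded-potential model on a short interval. This is precisely the step where the degeneracy $m$ enters; everything else is bookkeeping of powers of $k$ together with Proposition~\ref{P:mode-str} and the classical fact that zonal harmonics saturate Sogge's endpoint estimate.
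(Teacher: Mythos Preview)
Your proof of the estimate is essentially identical to the paper's: both use the single-term version of \eqref{E:str-sum}, the H\"older step in $t$, Proposition~\ref{P:mode-str}, and the same power-of-$k$ arithmetic to reach $\eta$.

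For the near-sharpness, your route is genuinely different. The paper constructs an explicit complex WKB quasimode: it takes $\tu(x)=(\phi')^{-1/2}e^{i\phi/h}$ cut off at scale $h^{1/(m+1)}$, with $\phi$ a complex primitive of $(E_0+m^{-1}y^{2m})^{1/2}$ and $E_0=(\alpha+i\mu)h^{2m/(m+1)}$. Multiplying by a Sogge-saturating harmonic $v_k$ and the phase $e^{it\tau}$ with $\tau=\lambda_k(1+E_0)$ gives an approximate solution whose $L^2_TL^{2^\star}$ norm is computed \emph{exactly} as $\big(\tfrac{1-e^{-2T\Im\tau}}{2\Im\tau}\big)^{1/2}\|\phi_0\|_{L^{2^\star}}$; the inhomogeneous Duhamel remainder is then shown to be lower order using the quasimode error bound $\|R\|_{L^2}=O(h^{2m/(m+1)})\|\tu\|_{L^2}$, and a contradiction argument finishes. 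You instead take a real bump at the trapping scale, rescale to the $h$-independent model $-\p_y^2-y^{2m}/m$ on rescaled time $[0,\epsilon]$, and argue short-time non-dispersion to keep $\|v_{lk}(t)\|_{L^{2^\star}}$ bounded below, then multiply by the zonal-harmonic factor. Your approach is lighter (no complex WKB, no explicit resonance) and conceptually transparent about why the timescale $k^{-2/(m+1)}$ is the right one; the paper's approach has the advantage that the time dependence is explicit, so no propagation-of-singularities or finite-speed-type argument is needed to control where the mass sits in the unbounded-potential model. The step you flag as ``nonformal''---showing the rescaled bump retains a fixed $L^{2^\star}$ fraction for $\tau\in[0,\epsilon]$ when the rescaled potential is unbounded in $y$---is real but routine once one notes that the \emph{actual} rescaled potential $h^{-2m/(m+1)}A^{-2}(h^{1/(m+1)}y)$ is globally bounded (so the group is unitary on $L^2$ and strongly continuous), and the initial bump lies in $H^1$, giving continuity in $L^{2^\star}$ at $\tau=0$.
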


\begin{remark}
This corollary shows that on natural semiclassical time scales the
Strichartz estimates are improved.  Indeed, in dimension $n = 2$,
there is a smoothing effect.  The
proof of the near-sharpness of this estimate is in Section
\ref{SS:saturation}.

\end{remark}

\subsection{Proof of Proposition \ref{P:P1a} and Corollary \ref{C:C1a}}

Assuming Proposition \ref{P:mode-str}, we have from \eqref{E:str-sum}
(in dimension $n \geq 3$):
\begin{align*}
\| v \|_{L^2_T L^{2^\star}(M)}^2 & \leq C k^{(n-2)/n}  \sum_{l \in \Lambda_k} \left\|   v_{lk} \right\|_{L^2_T
  L^{2^\star}(\reals)}^2 \\
& \leq C k^{(n-2)/n} T^{(n-1)/n} \sum_{l \in \Lambda_k} \left\| \lll k \rrr^\beta    v_{lk}^0 \right\|_{
  L^{2}(\reals)}^2 \\
& \leq C T^{(n-1)/n} \| \lll k \rrr^{(n-2)/2n + \beta } v_0
\|_{L^2(M)}^2,
\end{align*}
by orthonormality, which is Proposition \ref{P:P1a}, and hence also
Theorem 
\ref{T:T1a}.  A similar computation using \eqref{E:str-sum} holds when
$n = 2$, and $2 \leq q < \infty$.

For Corollary \ref{C:C1a}, the sum is over only one term, and $T \sim
k^{-2/(m+1)}$.  Then in this case, 
\begin{align*}
\| v \|_{L^2_T L^{2^\star}(M)}^2 & \leq C k^{-2(n-1)/n(m+1)} k^{(n-2)/n} \|  v_{lk} \|_{L^{2^\star}(\reals)}^2 \\
& \leq (1 + | k |)^{\frac{1}{n(m+1)} \left( m(n-2) - n \right) } \| \lll k \rrr^\beta  v_{lk}^0 \|_{L^2(M)}^2 \\
& \leq C \| \lll k \rrr^{\eta + \beta} v_0 \|_{L^2(M)}^2,
\end{align*}
where $\eta$ is as in Corollary \ref{C:C1a}.  A similar computation
holds for $q < \infty$ in the case $n = 2$.

\qed




\section{The parametrix}

It remains to prove Proposition \ref{P:mode-str}.  For that purpose, 
in this section we construct a parametrix for the 
separated Schr\"odinger equation:
\[
\begin{cases}
(D_t + (-\partial_x^2 + \lambda_k^2 A^{-2}(x) + V_1(x)) ) u = 0 , \\
u|_{t=0} = u_0.
\end{cases}
\]
We rescale $h^2 = \lambda_k^{-2}$ to get
\[
\begin{cases}
(D_t - (-\partial_x^2 + h^{-2} A^{-2}(x) + V_1(x)) ) u = 0 , \\
u|_{t=0} = u_0.
\end{cases}
\]
Let $v(t, x) = u(ht, x)$, so that
\[
\begin{cases}
(hD_t - (-h^2 \partial_x^2 + A^{-2}(x) + h^2 V_1(x)) ) v = 0 , \\
v|_{t=0} = u_0.
\end{cases}
\]

For the rest of this section, we consider the one-dimensional semiclassical
Schr\"odinger equation 
with barrier potential:
\begin{equation}
\label{E:v-sc-eqn}
\begin{cases}
(hD_t + (-h^2 \partial_x^2 + V(x))) v = 0, \\
v|_{t=0} = v_0.
\end{cases}
\end{equation}
The potential $V(x) = A^{-2}(x) + h^2 V_1(x)$ decays at $| x | =
\infty$, is even, and the principal part $A^{-2}(x)$ has a degenerate maximum at $x = 0$ with no other
critical points.  Denote $P = -h^2 \partial_x^2 + V(x)$.  

Let us give a brief summary of the steps involved in the
construction.  We will use a WKB type approximation, although, since
we are in dimension $1$, we do not need a particularly good
approximation.  The first step is to approximate the solution away
from the critical point at $(x, \xi) = (0,0)$.  Since this is a
non-trapping region, standard techniques can be used to construct a
parametrix and prove Strichartz estimates on a timescale $t \sim
h^{-1}$ for the semiclassical problem, or on a fixed timescale for the
classical problem.  A similar construction applies for energies away
from the trapped set.  

The remaining regions can be divided into an $h$-dependent strongly
trapped region and a ``transition region'', where wave packets
propagate, but not at a uniform rate.  By restricting attention to a
sufficiently small $h$-dependent neighbourhood of $(0,0)$, we can
extend a semiclassical parametrix to a timescale $t \sim
h^{(1-m)/(1+m)}$, which is a classical timescale of $h^{2/(m+1)}$.  We
divide the transition region into a logarithmic number of
$h$-dependent regions on which a similar parametrix construction
works.  Summing over all of these regions gives a parametrix
construction and corresponding Strichartz estimates in a compact
region in phase space with a logarithmic loss due to the number of
summands in the transition region.  These constructions and Strichartz
estimates hold for a frequency dependent timescale $\sim h^{2/(m+1) +
  \beta}$, $\beta>0$, or with a $\beta>0$ loss in derivative on
timescale $\sim h^{2/(m+1) }$.  We
then use the local smoothing estimate from \cite{ChWu-lsm} to glue
estimates on $\sim h^{-2/(m+1)}$ time intervals to get the Strichartz
estimates with a $\beta>0$ loss overall.

\subsection{WKB expansion}

We make the following WKB ansatz:
\[
v = h^{-1/2} \int e^{i \phi(t, x, \xi)/h} e^{-iy \xi} B(t, x, \xi) u_0(y) dy d \xi,
\]
and compute
\[
(hD_x)^2 v = \int e^{i \phi(t, x, \xi)/h} ((\phi_x)^2 B -i h \phi_{xx}
B -2ih \phi_x B_x - h^2 B_{xx} ) u_0 (y) dx d \xi,
\]
and
\[
hD_t v = \int e^{i \phi(t, x, \xi)/h} (\phi_t B -ih B_t) u_0(y) dy d
\xi.
\]
In order to approximately solve the semiclassical Schr\"odinger
equation for $v$, we use the WKB analysis.  We begin by trying to construct $\phi$ so
that
\[
\begin{cases}
\phi_t + (\phi_x)^2 +V(x) = 0, \\
\phi|_{t = 0} = x  \xi.
\end{cases}
\]
Given such $\phi$, we solve the transport equations for the amplitude
using a semiclassical expansion:
\[
B = \sum_{j \geq 0} h^j B_j(t, x, \xi),
\]
and
\[
-ihB_t -i h \phi_{xx}
B -2ih \phi_x B_x - h^2 B_{xx} = 0.
\]
This amounts to solving:
\begin{equation}
\label{E:WKB-amp-h}
\begin{cases}
- B_{0,t} -2 \phi_x B_{0,x} - \phi_{xx} B_0 = 0,
\\
-iB_{j,t} -i \phi_{xx} B_j -2 i  \phi_x B_{j,x} -  B_{j-1,xx} 
= 0, \,\,\, j \geq 1.
\end{cases}
\end{equation}

\subsection{The partition of unity}

In this subsection we construct the partition of unity which will be used
to glue together the parametrices constructed in the
following subsections.  Let $\epsilon>0$, $\delta>0$ be sufficiently
small, and $\omega>1$, all to be specified in the sequel.  Let $\chi
\in \Ci_c( \reals)$, $\chi(r) \equiv 1$ for $|r| \leq 1$, with support
in $\{ | r | \leq 2 \}$ and assume 
$\chi'(r) \leq 0$ for $r \geq 0$.  For $\sigma >0$, let
$\chi_\sigma(r) = \chi(r/\sigma)$.
Let $\chi^\pm
\in \Ci( \reals)$, $\chi^\pm(r) \equiv 1$ for $\pm r \gg 1$, $\chi^\pm
= 0$ for $\pm r \geq 0$, and choose $\chi^\pm$ so that $1 =
\chi(r) + \chi^+(r) + \chi^-(r)$, and denote also $\chi^\pm_\sigma(r) =
\chi^\pm(r/\sigma)$.  
  Choose
also $\psi_0, \psi \in \Ci_c( \reals_+)$ with $\psi_0(r) \equiv 1$
near $r = 0$, and $\psi(r) \equiv 1$ in a
neighbourhood of $r = \delta$ such that 
\[
 \sum_{0}^{N(h)} \psi(\omega^j x ) \equiv 1 \text{ for } x \in
[\delta, 2\epsilon h^{-1/(m+1)} ],
\]
and
\[
\psi_0(x) + \sum_{0}^{N(h)} \left( \psi(\omega^j x )  + \psi(-
   \omega^j x ) \right) \equiv 1 \text{ for } x \in
[-2 \epsilon h^{-1/(m+1)} , 2\epsilon h^{-1/(m+1)} ].
\]
We remark for later use that we take, for example
\[
\psi(\omega^j x) = \begin{cases} 1, \text{ for } \delta (\omega^j +
  \omega^{j-2}) \leq x \leq \delta (\omega^{j+1} - \omega^{j-1} ) \\
  0, \text{ for } x \in [ \delta (\omega^j -
  \omega^{j-2}) , \delta (\omega^{j+1} + \omega^{j-1} ) ]^\complement,
\end{cases}
\]
so that in particular
\[
| \p_x^k \psi(\omega^j x) | \leq C_k (\delta \omega)^k (\omega^{-jk}).
\]
We also observe this implies we need $N(h)$ sufficiently large that
$\omega^j \sim h^{-1/(m+1)}$, so that $N(h) = \O( \log(1/h))$, with
constants depending on $\delta, \omega$, and $m$.

We write
\[
e^{itP/h} = L(t) + S(t) := (1 - \chi_\epsilon(x)) e^{itP/h} +
\chi_\epsilon(x) e^{itP/h}
\]
for the propagator cut off to large and small values of $x$
respectively.  The set where the symbol $p = 1$ contains the critical
point $(0,0)$, so we further decompose into frequencies $\xi$ which
lie above (respectively below) the set where $p = 1$, and frequencies
which are bounded:
\[
S(t) = \Shi(t) + \Slo(t) ,
\]
where
\[
\Shi(t) = \11_{\{\pm hD_x \geq 1-V(x)\} } (1-\chi_{\epsilon^2} ((P-1))) S(t) ,
\]
and $\Slo(t) = S(t) - \Shi(t)$.
We
decompose yet again to
\begin{align*}
\Slo(t) & = \Sloo(t) + \sum_{j=0}^{N(h)} (\Slojp(t) + \Slojm(t) ) ,
\end{align*}
where
\[
\Sloo(t) = \psi_0(x/h^{1/(m+1)}) \Slo(t),  
\]
and
\[
\Slojpm(t) = \psi( \pm  \omega^j x/h^{1/(m+1)}
 )\Slo(t).
\]

The operators $\Slojpm(t)$ are localized to bounded frequencies, and
dyadic strips of size $h^{1/(m+1)} \omega^j$.  We require one further
localization, which is to assume that the operators are also
outgoing/incoming.  Choose $\tchi \in \Ci( \reals)$ so that $\tchi(r)
= 1$ for $r \geq 1$ and $\tchi(r) = 0$ for $r \leq 0$.  
For $a, \gamma >0$ to be determined, let 
\[
\Slojp^\pm(t) = \tchi((\pm hD_x + ax^m)/\gamma x^m) \Slojp(t),
\]
and
\[
\Slojm^\pm(t) = \tchi((\mp hD_x + ax^m)/\gamma x^m) \Slojm(t).
\]
This has the effect of localizing in phase space to the sets where 
\[
\pm \xi \geq -ax^m
\]
for $\Slojp^\pm(t)$ and similarly for $\Slojm^\pm(t)$.  By the
properties of $\tchi$, we have
\[
\Slojp^\pm(t) = \Slojp(t)
\]
microlocally on the set
\[
\{ \pm \xi \geq (\gamma -a) x^m \}.
\]
If $a > \gamma$, these two sets clearly cover the remaining phase
space, so if we can estimate each one of the operators above, we have
estimated the entire propagator.



It is clear then that if we can prove, say, $\beta/2>0$ loss Strichartz estimates for
$\Sloo(t)$, and for each $\Slojp^+$ and $\Slojm^-(t)$ for $t \geq 0$, the Strichartz estimates follow
for $\Slojp^-$ and $\Slojm^+(t)$ by time reversal.  We thus have to prove Strichartz
estimates for each of these operators, as well as for $\Shi(t)$ and $L(t)$, at which
point we can sum up and take a loss of $\log(1/h) + h^{-\beta/2} < C h^{-\beta}$.

\subsection{The parametrix for $L(t)$}
We recall that the operator $L(t)$ is the propagator localized to
large $| x |$.  
Then the operator $L(t)$ can be decomposed into $L^+(t) + L^-(t)$,
supported where $\pm x >0$ respectively.  Thus
\[
L^+(t) = \chi^+_\epsilon(x) e^{itP/h}.
\]
By a $T T^*$ argument (see \cite{KT}), in order to show $L^+ : L^2
\to L^p_T L^q$, it suffices to estimate 
\[
L^+(t) (L^+)^*(s) : L^1 \to L^\infty,
\]
but
\[
L^+(t) (L^+)^*(s) = \chi^+_\epsilon(x) e^{i(t-s)P/h}
\chi^+_\epsilon(x).
\]
That is, we need only construct a parametrix supported for $x \geq
\epsilon$, and for initial data supported for $x \geq \epsilon$.  

\begin{lemma}
\label{L:L}
There exist constants $C>0$ and $\alpha>0$ such that for any $u_0 \in
L^1 \cap L^2$, we have
\[
\| L^+(t) (L^+)^*(s) u_0 \|_{L^\infty_x} \leq C (|t-s|h)^{-1/2} \| u_0
\|_{L^1},
\]
for $| t | , | s | \leq \alpha h^{-1}$.  
As a consequence, 
\[
\| L(t) u_0 \|_{L^{p}_{\alpha h^{-1} } L^{q}} \leq h^{-1/p} \| u_0 \|_{L^2}
\]
for 
\[
\frac{2}{p}+ \frac{1}{q} = \frac{1}{2}, \,\,\, 2 \leq q < \infty.
\]

\end{lemma}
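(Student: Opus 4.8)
The plan is the standard $TT^*$-plus-parametrix scheme for a non-trapping energy region, as in \cite{KT}. Since $L^+(t)(L^+)^*(s)=\chi^+_\epsilon(x)\,e^{i(t-s)P/h}\,\chi^+_\epsilon(x)$, both displayed inequalities follow from a single semiclassical dispersive estimate
\[
\bigl\|\chi^+_\epsilon(x)\,e^{i\tau P/h}\,\chi^+_\epsilon(x)\bigr\|_{L^1\to L^\infty}\leq C(|\tau|h)^{-1/2},\qquad |\tau|\leq 2\alpha h^{-1},
\]
together with the trivial energy bound $\|\chi^+_\epsilon e^{i\tau P/h}\chi^+_\epsilon\|_{L^2\to L^2}\leq 1$: the first displayed inequality is immediate, and for the second one interpolates to get $\|\chi^+_\epsilon e^{i\tau P/h}\chi^+_\epsilon\|_{L^{q'}\to L^q}\lesssim (|\tau|h)^{-(1/2-1/q)}$ and invokes the abstract Keel--Tao estimate with dispersion exponent $1/2$. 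Since that exponent is below the endpoint threshold, the admissible range is exactly $\tfrac{2}{p}+\tfrac{1}{q}=\tfrac12$ with $2\leq q<\infty$, and the Hardy--Littlewood--Sobolev step converts the prefactor $h^{-(1/2-1/q)}$ into the claimed $h^{-1/p}$. The mirror operator $L^-(t)$ is handled identically after the change of variables $x\mapsto -x$ (legitimate since $V$ is even), and summing gives the estimate for $L(t)$.

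It therefore suffices to prove the displayed dispersive estimate, for which I would construct a WKB parametrix using the ansatz of the previous subsection. The key structural input is that $\chi^+_\epsilon$ is supported in $\{x\geq\epsilon\}$, where the potential $V=A^{-2}+h^2V_1$ has no critical point (the only one of $A^{-2}$ being at $x=0$, and $h^2V_1'$ a small correction), so the region is non-trapping: the Hamilton--Jacobi equation $\phi_t+(\phi_x)^2+V=0$, $\phi|_{t=0}=x\xi$, then admits a smooth solution without conjugate points on the time interval in question, with $\partial^2_{x\xi}\phi$ nonvanishing and $|\partial_\xi^2\phi|\gtrsim|t|$ at the relevant stationary points. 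Solving the transport equations \eqref{E:WKB-amp-h} along the characteristics and truncating the amplitude $B=\sum_{j=0}^N h^jB_j$ produces
\[
U(t)u_0=h^{-1/2}\!\int e^{i(\phi(t,x,\xi)-y\xi)/h}\,B(t,x,\xi)\,u_0(y)\,dy\,d\xi,
\]
with $B$ a bounded symbol of order $0$, $\|U(t)\|_{L^2\to L^2}=1+\O(h^N)$, and $(hD_t+P)U(t)=E(t)$, where $E(t)$ has kernel $h^{N+1}\!\int e^{i(\phi-y\xi)/h}B_{N,xx}\,d\xi$.

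The dispersive bound for $U(t)$ itself is then stationary phase in $\xi$ applied to its kernel $K_U(t,x,y)=h^{-1/2}\int e^{i(\phi(t,x,\xi)-y\xi)/h}B\,d\xi$: the $\xi$-critical point is nondegenerate with Hessian $\partial_\xi^2\phi$ of size $\gtrsim|t|$ and $B$ is uniformly bounded, so $|K_U(t,x,y)|\leq C(|t|h)^{-1/2}$ uniformly in $x,y$ (with the trivial bound $\lesssim h^{-1/2}$ covering $|t|\lesssim h$), and composing with the bounded multipliers $\chi^+_\epsilon$ preserves this. To pass from $U(t)$ to $e^{itP/h}$ I would use Duhamel, $e^{i\tau P/h}-U(\tau)=-\tfrac{i}{h}\int_0^\tau e^{i(\tau-s)P/h}E(s)\,ds$, and control the error in $L^1\to L^\infty$ via three facts: (i) $E(s)$ is $\O(h^N)$ in every relevant operator norm; (ii) the Schr\"odinger flow $e^{i\theta P/h}$ preserves semiclassical Sobolev norms; and (iii) $H^1_h(\reals)\hookrightarrow L^\infty$ with an $h^{-1/2}$ loss. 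Since $E(s)u_0$ is frequency-localized, these give $\|\chi^+_\epsilon(e^{i\tau P/h}-U(\tau))\chi^+_\epsilon\|_{L^1\to L^\infty}\lesssim h^{-1}\,|\tau|\,h^N\lesssim \alpha\,h^{N-2}$, which for $N$ chosen large is negligible against $(|\tau|h)^{-1/2}\gtrsim\alpha^{-1/2}$. Combining the three pieces yields the dispersive estimate, hence the Lemma.

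The main obstacle is the nondegeneracy of the phase $\phi$ --- the absence of conjugate points --- on the time scale $|t|\lesssim h^{-1}$: this is exactly what the non-trapping of $\{x\geq\epsilon\}$ buys, and precisely the property that fails near $x=0$ and forces the much more delicate, $h$-dependently localized constructions of the following subsections. A secondary technical point, as noted, is that the parametrix remainder must be controlled in the $L^1\to L^\infty$ topology even though $e^{itP/h}$ is not $L^\infty$-bounded; this is circumvented using semiclassical Sobolev embedding together with the $h^N$ smallness of $E$, rather than any propagation estimate for the full flow.
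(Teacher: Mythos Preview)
Your $TT^*$ reduction and the Keel--Tao step are fine, but the parametrix construction has a genuine gap. You assert that the Hamilton--Jacobi equation admits a smooth, caustic-free solution on $|t|\lesssim h^{-1}$ because ``the potential $V$ has no critical point'' on $\{x\geq\epsilon\}$. That is not sufficient: the bicharacteristic flow for $\xi^2+V(x)$ is not confined to $\{x\geq\epsilon\}$. A trajectory starting at $(y,\eta)$ with $y\geq\epsilon$ and $\eta<0$ enters $\{0<x<\epsilon\}$ and, if its energy $E=\eta^2+V(y)$ lies just below the barrier height $V(0)=1$, lingers near the degenerate critical point for a time $\sim(1-E)^{(1-m)/2m}$ before bouncing back into $\{x\geq\epsilon\}$; if $E$ is just above $1$, it crosses to $x<0$ and never returns. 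Your phase $\phi(t,x,\xi)$ must be built along all of these trajectories, and near the critical energy the uniform bounds on $\partial_y x$, on $\partial_\xi^2\phi$, and on the transport amplitudes $B_j$ that you need up to time $\alpha h^{-1}$ simply do not follow from the stated hypothesis. More broadly, on the scale $|t|\sim h^{-1}$ a naive global WKB phase is not expected to exist even in purely non-trapping long-range situations; one needs an Isozaki--Kitada-type construction, and your sketch does not supply one.

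The paper's argument is different and sidesteps this entirely. Rather than attempting a parametrix for $P$, the author modifies the profile to $\tilde A^{-2}=\chi(x/\epsilon)x^{-2}+(1-\chi(x/\epsilon))A^{-2}$, checks via the geodesic equations that the resulting $n$-dimensional metric $\tilde g=dx^2+\tilde A^2 G_\theta$ is a short-range, \emph{non-trapping} asymptotically Euclidean perturbation (the degenerate barrier at $x=0$ is replaced by a conic repulsive potential $x^{-2}$, so every trajectory escapes uniformly), and then invokes the black-box Strichartz theorem of Bouclet--Tzvetkov \cite{BoTz-gstr} for such manifolds. Since $\tilde g$ agrees with $g$ on the support of $\chi^+_\epsilon$, this gives the bound for $L^+$. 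The long-time parametrix underlying \cite{BoTz-gstr} is precisely the Isozaki--Kitada machinery; reproducing it by hand for the original $P$ would require substantially more than you have written, and would in any case force you to deal with the near-critical trajectories passing through $x=0$---exactly what the metric modification is designed to eliminate.
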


\begin{proof}

The proof is simply to observe that $L^+(t) (L^+)^*(s)$ is equal to a
non-trapping cut-off propagator, and hence obeys a strong dispersion
and perfect Strichartz estimates according to \cite{BoTz-gstr}.

To see this, let 
\[
\tA(x)^{-2} = \chi(x/\epsilon) x^{-2} + (1 - \chi(x/\epsilon)) A^{-2}.
\]
The function $\tA$ agrees with $A$ for large $x$ and agrees with $x$
for small $x$.  Then
\[
\tg = dx^2 + \tA^2(x) d \theta^2, \,\,\, x \geq 0
\]
is an asymptotically Euclidean metric, which agrees with the Euclidean
metric near $x = 0$.  In fact, since $g$ was a short-range
perturbation of the Euclidean metric as $x \to + \infty$, so is
$\tg$.  In addition, we claim that for $\epsilon>0$ sufficiently
small, $\tg$ is a {\it non-trapping} perturbation of the Euclidean
metric.  To see this, we examine the geodesic equations.  Let $\tp =
\xi^2 + \tA^{-2}(x) \eta^2$, and compute the geodesic equations:
\[
\begin{cases}
\dot{x} = 2 \xi, \\
\dot{\xi} = 2 \tA' \tA^{-3} \eta^2, \\
\dot{\theta} = 2 \tA \eta, \\
\dot{\eta} = 0.
\end{cases}
\]
Consider a unit speed geodesic with $\tp \equiv 1$.  Since $\eta$
remains constant, then either $\eta = 0$, in which case $\xi = \pm 1$
and $x \to \pm \infty$ uniformly, or $\eta \neq 0$.  If $\xi \equiv
0$, then necessarily $(\tA^{-2}(x))' = 0$ and $x$ is stationary, but
\[
(\tA^{-2}(x))'  = -2 \chi(x/\epsilon) x^{-3} -2 (1 - \chi(x/\epsilon))
A' A^{-3} + \epsilon^{-1} \chi'(x\epsilon) ( x^{-2} - A^{-2}).
\]
But $A' >0$ away from $x = 0$, $x^{-2} \gg A^{-2}$ for $x>0$
sufficiently small, and $\chi' \leq 0$ for $x>0$ implies
$(\tA^{-2}(x))' <0$ for $x >0$.  Hence there are no parallel periodic
geodesics.  

It remains to show that every other trajectory escapes to infinity.
But since $\dot{\xi} \geq c^{-1} x^{-3} \eta^2$, comparing to the
non-trapping conic metric with 
\[
\begin{cases}
\dot{x} = 2 \xi, \\
\dot{\xi} = c^{-1} x^{-3} \eta^2
\end{cases}
\]
implies that every other trajectory is non-trapped.
Then following Bouclet-Tzvetkov \cite{BoTz-gstr}, we get that 
\[
L^+ : L^2 \to L^p_{\alpha h^{-1}} L^q , \,\,\, \alpha >0,
\]
is a bounded operator for $(p,q)$ in the specified range.  
A similar estimate holds for $L^-$, and hence $L$, and hence for any $\epsilon>0$ sufficiently small, we can construct a
parametrix to get perfect Strichartz estimates for $| x | \geq \epsilon$.

\end{proof}


\subsection{The parametrix for $\Shi(t)$}
The operator $\Shi(t)$ is the propagator localized to small $| x |
\leq 2\epsilon$ and high frequencies $| P - 1 | \geq \epsilon^2$, and
$\pm \xi \geq 1-V(x)$.  
In order to estimate $\Shi(t)$, we employ a similar argument.  We
first decompose $\Shi(t) = \Shi^+(t) + \Shi^-(t)$ into a part supported in $\pm \xi >0$.   The point of the next lemma is that
singularities propagate out of this region quickly, depending on the
initial frequency.

\begin{lemma}
\label{L:Shi}
There exist  constants $\alpha, \kappa>0$ such that 
\[
\chi^+(|t-s| hD_x / \kappa \epsilon )  \Shi^+(t) (\Shi^+)^*(s) =
\O(h^\infty)
\]
in any seminorm, provided $| t | , | s | \leq \alpha h^{-1}$

There exist constants $C>0$ and $\alpha>0$ such that for any $u_0 \in
L^1 \cap L^2$, we have
\[
\| \Shi^+(t) (\Shi^+)^*(s) u_0 \|_{L^\infty_x} \leq C (|t-s|h)^{-1/2} \| u_0
\|_{L^1},
\]
for $| t|, |s | \leq \alpha h^{-1}$.  
As a consequence, 
\[
\| \Shi(t) u_0 \|_{L^p_{\alpha h^{-1} } L^q} \leq C h^{-1/p} \| u_0 \|_{L^2}
\]
for 
\[
\frac{2}{p}+ \frac{1}{q} = \frac{1}{2}, \,\,\, 2 \leq q < \infty.
\]


\end{lemma}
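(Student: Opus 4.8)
The plan is to follow the template of Lemma~\ref{L:L}: to show that on the region of phase space where $\Shi^+(t)$ is supported the Hamilton flow of the symbol $p=\xi^2+V(x)$ is non-trapping, to build a WKB parametrix valid for $0\le t\le\alpha h^{-1}$, and to extract the two stated estimates from it. First I would pin down the microsupport. On $\supp\chi_\epsilon$ one has $V(x)=1+\O(\epsilon^{2m}+h^2)$ and $|V'(x)|\lesssim|x|^{2m-1}+h^2$, so the cutoff $1-\chi_{\epsilon^2}((P-1))$ forces $\xi^2=(p-1)+(1-V)\gtrsim\epsilon^2$, and combined with $\pm hD_x\ge 1-V(x)=\O(\epsilon^{2m})$ this places $\Shi^+(t)$ and $(\Shi^+)^*(s)$, modulo $\O(h^\infty)$, in the region $\Gamma=\{|x|\le 2\epsilon,\ \xi\ge c\epsilon\}$ for some $c>0$. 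On $\Gamma$ the flow $\dot x=2\xi$, $\dot\xi=-V'(x)$ is non-trapping with a fixed escape time: since $|V'(x)|\lesssim\epsilon^{2m-1}$ for $|x|\le 2\epsilon$, the momentum stays above $\tfrac12 c\epsilon$ for a time $\gtrsim\epsilon^{2-2m}\gg 1$, during which $\dot x=2\xi\ge c\epsilon$ drives the trajectory --- monotonically in $x$, since $\dot x$ keeps its sign --- out of $\{|x|\le 2\epsilon\}$ within a fixed time $C_0=C_0(\epsilon,V)$; a packet of initial frequency $\xi_0$ in fact escapes in time $\lesssim\epsilon/\xi_0$.

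With this in hand I would split on the size of $|t-s|$. For $|t-s|\ge C_0$, $e^{i(t-s)P/h}$ carries the wavefront set of $(\Shi^+)^*(s)u_0\subset\Gamma$ outside $\{|x|\le 2\epsilon\}$, where the factor $\chi_\epsilon(x)$ in $\Shi^+(t)$ annihilates it, so $\Shi^+(t)(\Shi^+)^*(s)=\O(h^\infty)$; since $(|t-s|h)^{-1/2}\ge\alpha^{-1/2}$ this already yields both the $\chi^+$ statement and the dispersive bound there. For $|t-s|<C_0$ the time-$(t-s)$ flowout of $\Gamma$ projects to the base without caustics ($y\mapsto x(t-s,y,\xi_0)$ is increasing because $\dot x$ keeps its sign), so the WKB ansatz of the preceding subsections --- $\phi$ solving $\phi_t+\phi_x^2+V=0$, $\phi|_{t=0}=x\xi$, amplitude $B$ cut to $\Gamma$ --- produces $\Shi^+(t)=K(t)+\O(h^\infty)$ with $K(t)$ a semiclassical Fourier integral operator whose amplitude, for each $t$, lives where $\xi\lesssim\epsilon/t$ (only those packets are still over $\{|x|\le 2\epsilon\}$). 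The first assertion is then non-stationary phase: the frequency changes by $\O(\epsilon^{2m-1}|t-s|)$ along the flow, so composing with $\chi^+(|t-s|hD_x/\kappa\epsilon)$ --- a cutoff to output frequencies $\ge\kappa\epsilon/|t-s|$ --- is $\O(h^\infty)$ once $\kappa$ is chosen large, uniformly for $|t|,|s|\le\alpha h^{-1}$. The dispersive estimate comes from stationary phase in $\xi$ in the kernel of $K(t)K(s)^*$: exactly as for the free propagator $e^{i\tau h\partial_x^2}$, the $\xi$-Hessian of the phase is comparable to $|t-s|$, giving $\|\Shi^+(t)(\Shi^+)^*(s)u_0\|_{L^\infty}\lesssim(|t-s|h)^{-1/2}\|u_0\|_{L^1}$ for all $|t|,|s|\le\alpha h^{-1}$; the same holds for $\Shi^-$ via the reflection $x\mapsto -x$ (the potential is even), and summing gives the bound for $\Shi(t)\Shi(s)^*$. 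Feeding this and the uniform bound $\|\Shi(t)\|_{L^2\to L^2}\lesssim 1$ into the $TT^*$ method of Keel--Tao \cite{KT} on the interval $[0,\alpha h^{-1}]$ then produces $\|\Shi(t)u_0\|_{L^p_{\alpha h^{-1}}L^q}\le Ch^{-1/p}\|u_0\|_{L^2}$ for every admissible pair $\tfrac2p+\tfrac1q=\tfrac12$, $2\le q<\infty$, the power $h^{-1/p}$ being precisely the one generated by the $h^{-1/2}$ in the dispersive estimate.

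I expect the main obstacle to be the uniformity in frequency: the cutoff $1-\chi_{\epsilon^2}((P-1))$ does not bound $\xi$ from above, so one must either carry the parametrix and the stationary-phase estimate out on dyadic frequency blocks $\xi\in[c\epsilon,\infty)$ --- using that the escape time $\lesssim\epsilon/\xi$ shrinks as $\xi$ grows, so that only boundedly many blocks meet the output cutoff $\chi_\epsilon(x)$ for a given $|t-s|$ --- or, as in Lemma~\ref{L:L}, replace $P$ on the microsupport $\Gamma$ by a globally non-trapping Schr\"odinger operator and quote the dispersive and Strichartz estimates of Bouclet--Tzvetkov \cite{BoTz-gstr}. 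Either way the mechanism is the same: cutting to $\{\xi\ge c\epsilon\}$ excises the degenerate trapped point $(0,0)$, leaving a non-trapping region, so that the only genuine work is the bookkeeping for the frequency-dependent escape time encoded in the first assertion.
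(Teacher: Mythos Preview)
Your proposal is correct and follows essentially the same route as the paper: identify the microsupport of $\Shi^+$ as $\{|x|\le 2\epsilon,\ \xi\ge c\epsilon\}$, observe that on this set $|V'|=\O(\epsilon^{2m-1})$ so $\xi$ stays essentially constant and $x$ grows linearly, giving a frequency-dependent escape time $\sim\epsilon/\eta$; build a WKB parametrix valid up to that time (the paper checks the diffeomorphism bound $\partial x/\partial y=1+\O(\epsilon^{2m-2})$ directly, which is your ``no caustics'' assertion), and note that afterwards the wavefront set has left $\supp\chi_\epsilon$ so the remainder is $\O(h^\infty)$. The paper then says one ``sums over $\O(h^{-1})$ such parametrices'' to reach $|t|\le\alpha h^{-1}$, which is exactly the bookkeeping you flag as the main obstacle; your suggestion of a dyadic frequency decomposition (or a Bouclet--Tzvetkov comparison, as in Lemma~\ref{L:L}) is a perfectly good way to make this precise, and arguably more transparent than the paper's terse phrasing. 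One small inaccuracy: the symmetry taking $\Shi^+$ to $\Shi^-$ is $\xi\mapsto-\xi$ (or time reversal), not $x\mapsto-x$; but the argument for $\Shi^-$ is of course identical with the sign of $\xi$ flipped.
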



\begin{proof}

As usual, we consider the Hamiltonian system associated to $p$:
\[
\begin{cases}
\dot{x} = 2 \xi, \\
\dot{\xi} = -V'(x), \\
x(0) = y, \\
\xi(0) = \eta,
\end{cases}
\]
where now $|x |, | y | \leq 2\epsilon$ and $\eta \geq \epsilon$.  Then
a simple computation shows that in this region $|V'(x) | =
\O(\epsilon^{2m-1})$ and $| V''(x) | = \O( \epsilon^{2m-2})$.  Hence
if $t = \O(1)$, we have
\[
\xi = \eta + \O( \epsilon^{2m-1}) = \eta(1 + \O(\epsilon^{2m-2})),
\]
since $\eta \geq \epsilon$.  Hence
\[
\dot{x} = 2\eta(1 + \O(\epsilon^{2m-2})),
\]
so that
\[
x = y + 2 t \eta(1 + \O(\epsilon^{2m-2})),
\]
provided $t = \O(1)$.  This implies in particular, that for any $| t |
\geq C \epsilon / \eta$, we will have $| x | \geq 2\epsilon$, so that
we have propagated out of the region of interest.  Again, by virtue of
a $T T^*$ argument, we are interested in both initial data and
parametrix localized in $| x | \leq 2\epsilon, \xi \geq \epsilon$, so
we need only check the estimates on the phase function for $| t | \leq
C \epsilon / \eta$.  

We check the invertibility of the map $y \mapsto x(t)$:
\begin{align*}
\sup_{| t | \leq C \epsilon / \eta } \left| \frac{\partial
    x}{\partial y} (t) \right| & \leq 1 + 2 \int_0^{C \epsilon / \eta
} (C \epsilon / \eta - s) | V''(x)| \left| \frac{\partial
    x}{\partial y} (s) \right| ds \\
& \leq 1 + \O( \epsilon^2 / \eta^2 ) \O( \epsilon^{2m-2} ) \sup_{| s |
  \leq C \epsilon / \eta }\left| \frac{\partial
    x}{\partial y} (s) \right| ,
\end{align*}
which implies
\begin{align*}
\sup_{| t | \leq C \epsilon / \eta } \left| \frac{\partial
    x}{\partial y} (t) \right| & \leq 1 +\O( \epsilon^{2m-2} ) .
\end{align*}
Similarly we compute the lower bound:
\begin{align*}
\inf_{| t | \leq C \epsilon / \eta } \left| \frac{\partial
    x}{\partial y} (t) \right| & \geq 1 - 2 \int_0^{C \epsilon / \eta
} (C \epsilon / \eta - s) | V''(x)| \left| \frac{\partial
    x}{\partial y} (s) \right| ds \\
& \geq 1 - \O( \epsilon^2 / \eta^2 ) \O( \epsilon^{2m-2} ) \sup_{| s |
  \leq C \epsilon / \eta }\left| \frac{\partial
    x}{\partial y} (s) \right| \\
& \geq 1 - \O( \epsilon^{2m-2}),
\end{align*}
using our previously computed upper bound.  Hence in the range in
which we are interested, $\partial x / \partial y$ is uniformly
bounded above and below by a constant, provided $\epsilon>0$ is chosen
sufficiently small.

It is now a routine computation to construct the WKB amplitude and
compute the dispersive estimate for $| t | \leq C \epsilon/ \eta$.
After that time, the $h$-wavefront set of a solution is outside the
support of the cutoffs in $\Shi(t)$, so that any parametrix
approximation is $\O(h^\infty)$.  Summing over $\O(h^{-1})$ such
parametrices yields the dispersive estimate for $| t | \leq \alpha
h^{-1}$, and the associated Strichartz estimates.

A similar computation works for $\Shi(t)$ localized to $\xi \leq -
\epsilon$, which proves the lemma for $\Shi(t)$.

\end{proof}

\subsection{The parametrix for $\Sloo(t)$}
\label{SS:sloo}
The operator $\Sloo(t)$ is the propagator localized to small
frequencies $| P -1| \leq \epsilon^2$ or $|P-1| \geq \epsilon^2$ with
$| \xi | \leq 1 - V(x)$, as well as localized to a small $h$-dependent
spatial neighbourhood $| x | \leq \delta h^{1/(m+1)}$.  This is the
region which contains the trapping.  We observe that all of the $\Slo$
operators have $| x | \leq \epsilon$, which implies in addition that $| \xi | \leq 2
\epsilon$, say.

 We are now interested in
constructing a parametrix in the set $\{ | x | \leq \delta h^{1/(m+1)}, |
\xi | \leq \epsilon \}$.  
For this, we use the following
$h$-dependent scaling operator:
\[
T_h u(t,x) = h^{-1/(m+1)} u(h^{(m-1)/(m+1)} t , h^{-1/(m+1)}x).
\]
The purpose of the prefactor of $h^{-1/(m+1)}$, different from the
usual scaling prefactor, is to ensure that $\| T_h u \|_{L^1_x} =
\| u \|_{L^1_x}$, since in our final dispersion estimate, this is how
the initial data will be measured.  We compute:
\begin{align*}
T_h^{-1} (hD_t -h^2 \partial_x^2 + V(x) ) T_h & = (h^{(m-1)/(m+1)} hD_t
- h^{-2/(m+1)} h^2 \partial_x^2 + V(h^{1/(m+1)} x) ) \\
& = h^{2m/(m+1)} ( D_t - \partial_x^2 + \tV(x;h)),
\end{align*}
where
\[
\tV(x;h) = h^{-2m/(m+1)} V(h^{1/(m+1)} x).
\]

\begin{remark}
Similar to the the techniques in the paper \cite{ChWu-lsm},
conjugation by the scaling operator $T_h$ is an inhomogeneous
``blowup'' procedure.  However, the blowdown map $\B$ is now {\it
  time-dependent} and takes the form
\[
\B(t, \tau, x, \xi) = ( h^{(1-m)/(m+1)} t, h^{2m/(m+1)} \tau,
  h^{1/(m+1)} x, h^{m/(m+1)} \xi ).
\]
That is, we are blowing up the $(\tau, x, \xi)$ coordinates and
blowing down the $t$ coordinate at the same time.  Observe that the
blowdown in $t$ does not
cause a problem with the calculus since the operator $P$ is
independent of $t$.  Then indeed, according
to the calculus developed in \cite{ChWu-lsm}, $\sigma_h(P) = \tau +
\xi^2 + V(x)$ in the $h$ calculus, while $T_h^{-1} P T_h$ has symbol
\[
\tp_1 = (h^{2m/(m+1)} \tau ) + ( h^{m/(m+1)} \xi )^2 + V(h^{1/(m+1)}x)
\]
in the $1$-calculus, or scale-invariant calculus.  
Factoring out the
$h^{2m/(m+1)}$ as above results in a singular symbol in the
scale-invariant calculus (see Figure \ref{fig:phase-blowup}).  However, the special structure of $V$
allows us to construct a reasonable parametrix where $V'$ is extremely
small, and where $V'$ is large, wave packets propagate away in a
controlled fashion.  This is made
rigorous in the following constructions.  
\end{remark}

Denote $\tP = D_t - \partial_x^2 + \tV(x;h)$, where 
\[
\tV(x;h) = h^{-2m/(m+1)} V(h^{1/(m+1)} x).  
\]
We break the parametrix
construction into two sets, where $\tV'$ is small (and hence this
region contains the trapping), and where $\tV'$ is
large, which we reserve for the next subsections where we estimate $\Slojpm^\pm(t)$.

We want to now
construct a parametrix for $\tP$ on the set 
\[
\{ | x | \leq \delta, | \xi |
\leq 2 h^{-m/(m+1)}, | t | \leq 1 \},
\]
but in the $1$-calculus (scale-invariant).  Then if $w(t,x)$ is such
a parametrix, $v(t,x) = T_h w(t,x)$ is a parametrix for $P$ on the set
$\{ | x | \leq \delta h^{1/(m+1)}, | \xi | \leq \epsilon, | t | \leq
h^{(1-m)/(1+m)} \}$, as required.

\begin{figure}
\hfill
\centerline{\input{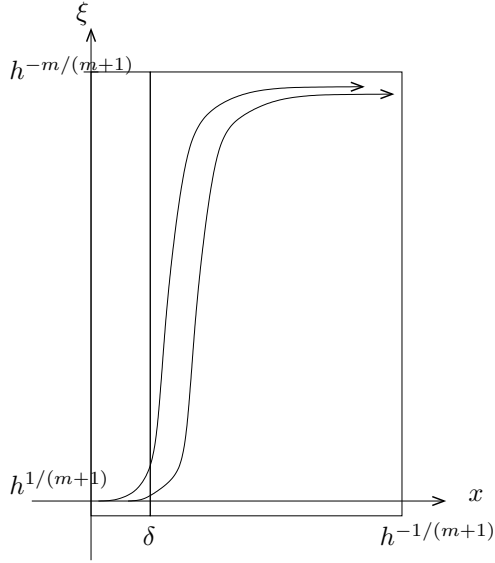}}
\caption{\label{fig:phase-blowup} The phase plane in the blown up
  coordinates.  The invariant curves are given by
  $L^\zeta = \{\xi  = \sqrt{\zeta^2 - \tV(x) }$, with $\zeta^2 \geq
  h^{-2m/(m+1)}$.  The boxes represent the $h$-wavefront set of a wave
packet after rescaling $(x, \xi) \mapsto (h^{-1/(m+1)}x, h^{-m/(m+1)}
\xi)$, but in the $1$-calculus.}
\hfill
\end{figure}

\begin{lemma}
\label{L:Slooo}
There exists $\alpha>0$ and a phase function $\phi(t, x, \eta)$ satisfying
\[
\begin{cases}
\phi_t + \phi_x^2 + \tV(x;h) = 0, \\
\phi(0, x, \eta) = x \eta
\end{cases}
\]
for $| x | \leq \delta$, $| \eta | \leq 2\epsilon h^{-m/(m+1)}$, and $|
t | \leq \alpha$.

We further have 
\[
\phi_{\eta \eta} \sim 2 t(1 + \O(t)),
\]
and
\[
\phi_{xx} = \O(tx^{2m-2})
\]
for $| t | \leq \alpha$.
\end{lemma}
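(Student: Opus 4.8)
The plan is to solve the Hamilton--Jacobi equation $\phi_t + \phi_x^2 + \tV(x;h) = 0$ with data $\phi(0,x,\eta) = x\eta$ by the method of characteristics, and then read off the two second-derivative estimates from the behaviour of the bicharacteristic flow. First I would write down the Hamiltonian system for $p(x,\xi) = \xi^2 + \tV(x;h)$, namely $\dot x = 2\xi$, $\dot\xi = -\tV'(x;h)$, with initial data $x(0) = y$, $\xi(0) = \eta$. The key structural input is the scaling: since $\tV(x;h) = h^{-2m/(m+1)}V(h^{1/(m+1)}x)$ and $V$ has a degenerate maximum of the form $1 - x^{2m}$ at $0$ (plus lower-order terms), on $|x| \le \delta$ we get $\tV(x;h) = h^{-2m/(m+1)}(1 - h^{2m/(m+1)}x^{2m} + \cdots) = h^{-2m/(m+1)} - x^{2m} + \cdots$, so that $\tV'(x;h) = -2m x^{2m-1} + (\text{lower order})$ and $\tV''(x;h) = -2m(2m-1)x^{2m-2} + \cdots$. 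In particular $\tV'$ and $\tV''$ are $\O(\delta^{2m-1})$ and $\O(\delta^{2m-2})$ respectively on the box, uniformly in $h$; this is exactly the "singular symbol but harmless derivative" phenomenon flagged in the remark preceding the lemma.

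Next I would run a Picard/Gr\"onwall argument on the integrated characteristic equations $x(t) = y + 2t\eta + \O(\int_0^t(t-s)|\tV'|\,ds)$ to show that for $|t| \le \alpha$ with $\alpha$ small (depending only on $\delta, m$), the map $y \mapsto x(t,y,\eta)$ is a diffeomorphism onto its image with $\partial x/\partial y$ bounded above and below near $1$ — this is the same computation as in the proof of Lemma \ref{L:Shi}, only now with the smallness coming from $\delta$ rather than $\epsilon$. Solving for $y$ in terms of $(t,x,\eta)$ and setting $\phi(t,x,\eta) = y\eta + \int_0^t (\xi^2 - \tV)\,ds$ along the characteristic through $(x,\eta)$ gives the desired solution of the eikonal equation; smoothness and the domain of validity follow from the implicit function theorem together with the uniform bounds. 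For the second-derivative estimates: differentiating the characteristic equations in $\eta$ shows $\partial_\eta x = 2t + \O(t\int_0^t |\tV''| |\partial_\eta x|\,ds) = 2t(1 + \O(t))$ since $\tV'' = \O(\delta^{2m-2})$, and since $\phi_\eta = y$ along characteristics one gets $\phi_{\eta\eta} = \partial_\eta y$, which inverting the Jacobian yields $\phi_{\eta\eta} \sim 2t(1+\O(t))$. For $\phi_{xx}$: differentiating the eikonal equation twice in $x$ gives a transport equation for $\phi_{xx}$ along the flow, $\partial_t \phi_{xx} + 2\phi_x \partial_x \phi_{xx} = -2\phi_{xx}^2 - \tV''$, with $\phi_{xx}|_{t=0} = 0$; since the forcing term $\tV'' = \O(x^{2m-2})$ and $\phi_{xx}$ starts at zero, a Gr\"onwall estimate gives $\phi_{xx} = \O(t x^{2m-2})$ on $|t| \le \alpha$ (the nonlinear $-2\phi_{xx}^2$ term only helps, and can be absorbed for $\alpha$ small).

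The main obstacle I expect is keeping the estimates \emph{uniform in $h$} despite the apparent blowup: $\eta$ ranges up to $2\epsilon h^{-m/(m+1)}$, so the linear term $2t\eta$ in $x(t)$ is large, and one must be careful that $\alpha$ does not shrink with $h$. The resolution is that the potential term $\tV$ never enters the estimates with its large additive constant $h^{-2m/(m+1)}$ (which drops out of $\tV'$ and $\tV''$), and that $x$ is constrained to $|x| \le \delta$ — so even though a characteristic with large $\eta$ leaves the box almost immediately, on the time it \emph{stays} in the box the deviation of $\partial x/\partial y$ and of $\partial_\eta x$ from their free values is governed only by $\tV''$, which is $h$-uniformly small. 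Making this precise — i.e.\ distinguishing "time to exit the box" from "time for which the phase estimates hold," exactly as in Lemma \ref{L:Shi} — is the one place where care is needed; everything else is the standard WKB characteristic analysis.
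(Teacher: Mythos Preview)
Your proposal is correct and follows essentially the same route as the paper: both solve the eikonal equation by running the Hamiltonian flow for $p = \xi^2 + \tV$, use the bounds $\tV', \tV'' = \O(\delta^{2m-1}), \O(\delta^{2m-2})$ on the box to get $\partial x/\partial y \sim 1$ and $\partial x/\partial \eta = 2t(1+\O(t))$ via Gr\"onwall, and then read off $\phi_{\eta\eta}$ from $\phi_\eta = y$. The only cosmetic difference is in the $\phi_{xx}$ step: the paper uses the intertwining $\phi_x = \xi$ and computes $\partial_y \xi = -\int_0^t \tV''(x(s))\,\partial_y x(s)\,ds = \O(tx^{2m-2})$ directly, whereas you derive the equivalent Riccati transport equation for $\phi_{xx}$ --- these are the same computation (your Riccati variable is exactly $(\partial_y\xi)/(\partial_y x)$), so the arguments coincide.
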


\begin{proof}
The proof is by the usual Hamiltonian method.  We consider $q = \xi^2
+ \tV(x;h)$ and the Hamiltonian system associated to $q$:
\begin{equation}
\label{E:Ham-q}
\begin{cases}
\dot{x} = 2 \xi, \\
\dot{\xi} = - \partial_x \tV(x;h), \\
x(0) = y, \\
\xi(0) = \eta.
\end{cases}
\end{equation}
Now the potential $\tV(x;h)$ has been computed above, and satisfies
\begin{align*}
-\partial_x \tV(x;h) & = -\partial_x \left(  h^{-2m/(m+1)}(1 +
(h^{1/(m+1)}x)^{2m} )^{-1/m} + h^{2/(m+1)} V_1(h^{1/(m+1)} x) \right) \\
& = 2h^{-2m/(m+1)} h^{1/(m+1)} (h^{1/(m+1)}x)^{2m-1} (1 +
(h^{1/(m+1)}x)^{2m} )^{-1/m -1} \\
& \quad + \O( h^{3/(m+1)}
((h^{1/(m+1)}x)^{2m-3}   ).
\end{align*}
For $| x | \leq \delta$ this derivative is bounded, and has the same
sign as $x$.  Let us denote $B(x) = - \partial_x \tV(x;h)$ to avoid
cumbersome notation.  

In order to apply the usual Hamilton-Jacobi theory, we need to show
that $\partial x/ \partial y$ is uniformly bounded above and below by
positive constants on some interval $| t | \leq \alpha$, so that we
can invert the transformation $y \mapsto x(t)$ to get $y = y(t,x)$.
Then using $(x, \eta)$ as coordinates instead of $(y, \eta)$ proves
the first part of the Lemma.  We write
\[
x(t) = y + 2 t \eta + \int_0^t (t-s) B(x(s)) ds,
\]
and compute
\[
\frac{\partial x}{\partial y}(t) = 1 + \int_0^t(t-s) B'(x(s))
\frac{\partial x}{\partial y}(s)ds.
\]
We know 
\[
\frac{\partial x}{\partial y}(0) = 1,
\]
and $B'(x)$ is non-negative in a neighbourhood of $x = 0$, so the
integral in the above expression is positive for $|x| \leq \delta$ and
$| t | \leq \alpha$ sufficiently small.  Further, $B'$ is bounded for
$|x | \leq \delta$, so the integral expression is also bounded above
for $| t | \leq \alpha$.  Hence by restricting $|x|$ and $| t |$ to
fixed, bounded ranges, we conclude the map sending $y \mapsto x(t)$ is
invertible, and this completes the proof of the first assertion.

We observe that, by construction, $\phi_\eta (t, x, \eta) = y$, so
that to compute $\phi_{\eta \eta}$, we need to compute
\[
\frac{\partial y}{\partial \eta} = \frac{ \partial y}{\partial x}
\frac{ \partial x}{\partial \eta}.
\]
We have already shown that $\partial y / \partial x$ is bounded above
and below for $| t | \leq \alpha$, so we compute
\begin{align*}
\frac{ \partial x}{\partial \eta} & = 2t + \int_0^t(t-s) B'(x(s))
\frac{\partial x}{\partial \eta}(s)ds \\
& = 2t + \O(t^2) \sup \frac{\partial x}{\partial \eta}.
\end{align*}
This implies 
\[
\sup_{|t| \leq \alpha} \frac{\partial x}{\partial \eta}(t) \leq 2t(1 +
\O(t)).
\]
Plugging this into the integral expression above yields
\[
\inf_{|t| \leq \alpha} \frac{\partial x}{\partial \eta} \geq 2t(1 +
\O(t^3)).
\]

Finally, since the intertwining relation gives $\phi_x (t, x, \eta) =
\xi$, we have
\[
\phi_{xx} = \p_y \xi \p_x y
\]
in the notation above.  We have already shown that $\p_x y$ is bounded
above and below by a positive constant for $| t | \leq \alpha$, so we
just need to compute
\begin{align*}
\p_y \xi & = \p_y \left( \eta - \int_0^t \tV'(x(s))  ds \right) \\
& = -\int_0^t \tV''(x(s)) \p_y x(s) ds \\
& = \O( t x^{2m-2} ).
\end{align*}
This is the last assertion in the Lemma.

\end{proof}

We now construct the amplitude for the parametrix for
the operator $\Sloo(t)$.  This, combined with
Lemma \ref{L:Slooo}, will be used to compute a
dispersion estimate, resulting in a
Strichartz estimate.  The problem is that, since we are working in a
marginal calculus, the error terms in our parametrix are just too
large.  For example, the error term $\phi_{xx} \sim t x^{2m-2}$
computed in Lemma \ref{L:Slooo} rescales as
\[
T_h \phi_{xx} \sim h^{(m-1)/(m+1)} t h^{-(2m-2)/(m+1)} x^{2m-2}
\sim h^{(1-m)/(1+m)} t x^{2m-2}.
\]
This operator, when composed with the appropriate oscillatory
integral, yields an $L^2$ bounded operator for each $t$, $| t | \leq h^{(1-m)/(m+1)}$.  However, to apply an
energy estimate or a local smoothing estimate, we either have to
integrate in time (now an interval of length $\sim h^{(1-m)/(1+m)}$),
or pull out a factor of $x^{m-1}$ to apply Theorem \ref{T:lsm}.  In
either case, we lose a factor of $h^{(1-m)/2(m+1)}$.  Hence at this
point we must accept an $\beta >0$ loss in regularity by restricting
our attention to a slightly smaller time interval.  Then the ``lower
order'' terms in the amplitude construction will actually gain powers
of $h$.

We are interested in constructing a parametrix for
the operator $\Sloo(t) \Sloo^*(s)$.  We have constructed a phase
function $\phi(t, x, \xi)$ in rescaled coordinates, assuming
appropriate microlocal cutoffs.  That is, we have constructed the
appropriate phase functions to approximate the operators
\[
T_h^{-1} \Sloo(t) \Sloo^*(s) = T_h^{-1} \Sloo(t-s) \chi_\star,
\]
where $\chi_\star$ is the appropriate microlocal cutoff.  
We have not yet computed the amplitude.  
Recalling the transport equations in the $h$-calculus \eqref{E:WKB-amp-h}, the transport
equations for the amplitude $B$ in the rescaled $1$-calculus
coordinates become
\[
D_t B + 2 \phi_x D_x B -i \phi_{xx} B - \partial_x^2 B = 0.
\]
The standard technique here is to guess an asymptotic series, however,
there is no small parameter, so we instead modify our ansatz to take
advantage of the Frobenius theorem.  

That is, the Frobenius theorem guarantees the existence of a function
$\Gamma(t,x),$ depending implicitly on the frequency $\xi$, satisfying
\[
\begin{cases}
\partial_t \Gamma + 2 \phi_x \partial_x \Gamma = 0, \\
\Gamma(0,x) = x.
\end{cases}
\]
We then construct $B = \sum_{j = 0}^KB_j$ for sufficiently large $K$ to
be determined (independent of $h$) with 
\[
\begin{cases}
B_0 \equiv 1, \\
B_j = -\int_0^t \phi_{xx} B_{j-1} |_{(s, \Gamma(t-s,x))} + i
B_{j-1,xx} |_{(s, \Gamma(t-s,x))}.
\end{cases}
\]
An induction argument shows that $B_j = \O(t^j)$ for each $j$, since
we are in the scale invariant calculus.



Then 
\[
w(t, x) = (2 \pi)^{-1} \int e^{i \phi(t, x, \xi) - i y \xi }
B(t, x, \xi) \chi_\star (y, D_y)^* w_0(y) dy d \xi
\]
solves
\[
\begin{cases}
\tP w = \tE, \\
w(0, x) = \chi_\star (x, D_x)^* w_0(x),
\end{cases}
\]
where 
\[
\chi_\star = T^{-1}_h \psi_0(x/h^{1/(m+1)}) ( 1 - \11_{\{\pm hD_x \geq
   1-V(x)\} } (1-\chi_{\epsilon^2} ((P-1))) ) \chi_\epsilon(x) T_h
\]
is the appropriate microlocal cutoff, and the equation is
understood to make sense for $|t| \leq \alpha$.  
Here, the error $\tE$ is given by
\[
\tE  = (2 \pi)^{-1} \int e^{i \phi(t, x, \xi) - i y
  \xi} (-\partial_x^2 B_K -i \phi_{xx} B_K )\chi^*(y,
D_y) w_0(y) dy d \xi .
\]
That is, $\tE$ is an oscillatory integral operator with the same phase
function as $w$, and amplitude $A(t,x,\xi)$ satisfying 
\[
| \partial_x^k \partial_\xi^l A| \leq C_{kl} t^K,
\]
and hence, according to the next Lemma, satisfies
\[
\| \tE \|_{L^2_x} = \O (t^K) \| \chi^* w_0 \|_{L^2}.
\]

\begin{lemma}
Suppose $\Gamma(t, x, \xi) \in \Ci_{b} \s_{0,0}$ is a smooth family of symbols with bounded derivatives,
and let $F(t)$, $0 \leq t \leq \alpha$ be the operator defined by
\[
F(t) g(x) = \int e^{i \phi(t, x, \xi) - i y \xi } \Gamma(t, x, \xi)
\chi_\star(y, D_y)
g(y) dy d \xi,
\]
where $\phi$ is the phase function constructed above and
$\chi_\star$ is the appropriate microlocal cutoff.  Then 
\[
\sup_{0 \leq t \leq \alpha} \| F(t) g \|_{L^2} \leq C \| g \|_{L^2}.
\]
\end{lemma}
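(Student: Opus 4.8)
The plan is to prove the $L^2 \to L^2$ boundedness of the oscillatory integral operator $F(t)$, uniformly for $0 \le t \le \alpha$, by a standard almost-orthogonality/stationary-phase argument. First I would write $F(t) = G(t) \circ \chi_\star(y, D_y)$, where $G(t) g(x) = \int e^{i\phi(t,x,\xi) - iy\xi} \Gamma(t,x,\xi) g(y)\, dy\, d\xi$. Since $\chi_\star$ is (up to conjugation by $T_h$) a composition of spatial cutoffs and microlocal frequency cutoffs, it is bounded on $L^2$ with norm $O(1)$; this is immediate from the fact that the cutoffs defining $\Shi$, $\Sloo$, etc.\ are $h$-dependent pseudodifferential cutoffs with symbols that, after the $T_h$ rescaling, live in a fixed symbol class (as already noted in the excerpt). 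So it suffices to bound $G(t)$.

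The heart of the matter is then the Schur-test estimate for the kernel of $G(t) G(t)^*$. Its Schwartz kernel is
\[
K(x, x') = \int e^{i(\phi(t,x,\xi) - \phi(t,x',\xi))} \Gamma(t,x,\xi) \overline{\Gamma(t,x',\xi)}\, d\xi,
\]
and I would estimate it by non-stationary phase in $\xi$: since $\phi_\xi(t,x,\xi) = y(t,x,\xi)$ and by Lemma \ref{L:Slooo} together with the invertibility of $y \mapsto x$ established there, we have $\partial_\xi(\phi(t,x,\xi) - \phi(t,x',\xi)) = y(t,x,\xi) - y(t,x',\xi)$, and this is comparable to $(x - x')$ uniformly in $\xi$ for $|t| \le \alpha$ (because $\partial y/\partial x$ is bounded above and below). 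Integrating by parts repeatedly in $\xi$ — the amplitude $\Gamma$ and its $\xi$-derivatives are bounded since $\Gamma \in \Ci_b \s_{0,0}$, and the cutoff $\chi_\star$ restricts $\xi$ to a bounded set after rescaling, keeping the $\xi$-integral over a fixed compact range — yields
\[
|K(x,x')| \le C_N \lll x - x' \rrr^{-N}
\]
for any $N$, with constants uniform in $t \in [0,\alpha]$. The Schur test then gives $\|G(t) G(t)^*\|_{L^2 \to L^2} \le C$, hence $\|G(t)\|_{L^2 \to L^2} \le C$, and composing with $\chi_\star(y,D_y)$ completes the proof.

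The main obstacle, and the point requiring care rather than routine work, is verifying that the nonstationary-phase gain is genuinely uniform in $t$ down to $t = 0$: as $t \to 0$ the phase $\phi(t,x,\xi) - \phi(t,x',\xi)$ degenerates (at $t=0$ it is exactly $(x-x')\xi$, which is fine, but the Hessian bounds like $\phi_{\eta\eta} \sim 2t$ vanish). The resolution is that the relevant quantity for the Schur estimate is $\partial_\xi \phi = y(t,x,\xi)$, whose $x$-derivative $\partial y/\partial x$ stays bounded below by a positive constant for all $|t| \le \alpha$ including $t = 0$ (where it equals $1$), so the oscillation in $\xi$ never degenerates in $x - x'$ and the integration by parts is legitimate throughout. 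One must also check that each integration by parts in $\xi$ does not lose powers of $t^{-1}$: the boundary terms vanish since $\chi_\star$ cuts off $\xi$ smoothly, and each step produces factors of $1/(y(t,x,\xi) - y(t,x',\xi)) \sim 1/(x-x')$ together with $\xi$-derivatives of $\Gamma$ and of the reciprocal of $\partial_\xi\phi$ — all of which are $O(1)$ uniformly in $t$ by the regularity statements in Lemma \ref{L:Slooo}. Thus no negative powers of $t$ appear, and the uniform $L^2$ bound follows.
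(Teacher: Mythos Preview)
Your $TT^*$ setup and the identification of the kernel
\[
K(x,x') = \int e^{i(\phi(t,x,\xi)-\phi(t,x',\xi))}\,\Gamma(t,x,\xi)\overline{\Gamma(t,x',\xi)}\,d\xi
\]
together with $\partial_\xi\bigl(\phi(t,x,\xi)-\phi(t,x',\xi)\bigr)=y(t,x,\xi)-y(t,x',\xi)\sim c\,(x-x')$ (with $c$ bounded above and below uniformly in $t\in[0,\alpha]$, including $t=0$) is exactly what the paper does. The gap is in the last step.

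You assert that ``the cutoff $\chi_\star$ restricts $\xi$ to a bounded set after rescaling, keeping the $\xi$-integral over a fixed compact range.'' This is backwards. In the original $h$-coordinates $|\xi|\le 2\epsilon$ is fixed, but the lemma lives in the \emph{rescaled} (scale-invariant) coordinates, where the blowup $\xi\mapsto h^{-m/(m+1)}\xi$ sends the frequency support to $|\xi|\lesssim \epsilon\,h^{-m/(m+1)}$. Hence the $\xi$-integral in $K$ is over a set of measure $R\sim h^{-m/(m+1)}$. Your integration-by-parts bound then reads $|K(x,x')|\le C_N\,R\,\langle x-x'\rangle^{-N}$, and Schur gives only $\|FF^*\|\le CR$, i.e.\ $\|F\|\le C\,h^{-m/2(m+1)}$, which is not uniform in $h$ and is useless for the subsequent error estimate.

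The paper avoids this by not using Schur. After writing $\phi(t,x,\xi)-\phi(t,x',\xi)=(x-x')\Phi(t,x,x',\xi)$ and checking (via the estimates on $\partial_y x$, $\partial_y^2 x$) that $\xi\mapsto\Phi$ is a diffeomorphism with all derivatives of the resulting amplitude bounded uniformly in $t$ and $h$, one recognizes $K$ as the kernel of a pseudodifferential operator with symbol in $\s_{0,0}$. Calder\'on--Vaillancourt then gives $L^2$-boundedness with a constant depending only on finitely many $\sup$-norms of derivatives of the symbol, \emph{not} on the size of its $\xi$-support. That is precisely what absorbs the large frequency window. Your argument is salvageable by making this substitution: keep the kernel computation, drop the Schur test, and invoke Calder\'on--Vaillancourt instead.
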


\begin{proof}

Let us work microlocally to avoid continually using microlocal
cutoffs, and therefore assume the appropriate microlocal
concentration.  The $L^2$ boundedness of $F(t)$ is equivalent to the
$L^2$ boundedness of $F(t)^*$, which follows from the $L^2$
boundedness of $F(t) F(t)^*$.  The operator $F(t) F(t)^*$ is easily
seen to have integral kernel
\[
K = \int e^{i \phi(t, x, \xi) - i \phi(t, x', \xi)} \Gamma(t, x, \xi)
\bar{\Gamma}(t, x', \xi) d \xi,
\]
where again we are implicitly assuming appropriate microlocal cutoffs.

By stationary phase, this integral kernel has singularities when
\[
\partial_\xi (\phi(t, x, \xi) -  \phi(t, x', \xi)) = 0,
\]
which is when (using the notations from the phase construction)
\[
y(t, x, \xi) - y(t, x', \xi) = 0.
\]
Let us assume that $x \geq x'$, so that we want to compute where
\[
(x - x') \left( \partial_x y |_x + \O( \partial_x^2 y (x - x') )
\right).
\]
Now due to the microlocal cutoffs $\chi_\star$, we have that $x$
and $x'$ are both small.  By the
inverse function theorem and the boundedness of $\partial_x y$, we
need to estimate $\partial_y^2 x$ in the Hamiltonian systems used to
construct the phase functions.  We compute
\[
\partial_y^2 x = - \int_{0}^t (\tV'''(x) (\partial_y x)^2 +
\tV''(x) \partial_y^2 x) ds,
\]
and estimating the first term by $c$ for a small constant $c$
and solving for $\sup \partial_y^2 x$ shows that
\[
|\O(\partial_x^2 y (x - x') ) | \leq c',
\]
where $c'>0$ is a small constant depending on our previous choices of
$\epsilon$, $\delta$, and $\omega$.

Iterating this argument for other powers of $(x-x')$ shows that the singularities of the integral kernel lie on the diagonal $|
x - x' | = 0$, so the integral kernel defines a $0$ order
pseudodifferential operator with symbol in the class $\s_{0,0}$.  By
the Calder\'on-Vaillancourt theorem, the $L^2$ boundedness is
established.

\end{proof}


If we now take $v = T_h w$, we see
\begin{align*}
P v & = T_h T_h^{-1} P T_h w \\
& = h^{2m/(m+1)} T_h \tP w \\
& = E,
\end{align*}
with initial conditions
\[
v(0, x) = T_h w(0,x),
\]
and where
\[
E = h^{2m/(m+1)} T_h \tE.
\]
A simple computation shows that $\| T_h f \|_{L^2} = h^{-1/2(m+1)} \|
f \|_{L^2}$, so that if we now restrict attention to the smaller (rescaled)
time interval
\[
0 \leq t \leq \alpha h^{(1-m)/(m+1) + \beta}
\]
for some small fixed $\beta >0$, we have
\begin{align*}
\sup_{0 \leq t \leq \alpha h^{(1-m)/(m+1) + \beta}  }\| E \|_{L^2} & =
h^{(4m-1)/2(m+1)}  \sup_{0 \leq t \leq \alpha h^\beta } \| \tE \|_{L^2} \\
& \leq C h^{(4m-1)/2(m+1)} h^{\beta K} \| \chi_\star^*w_0 \|_{L^2} \\
& \leq C h^{2m/(m+1)} h^{\beta K}\| \chi_\star^* v_0 \|_{L^2}.
\end{align*}
Here, in the above computations, we have suppressed the variables of
the microlocal cutoffs $\chi_\star$, which are understood to be evaluated
in the phase space variables of the appropriate scale.

The following lemma contains the dispersion and Strichartz estimates
for the operators $\Sloo(t)$.

\begin{lemma}
\label{L:disp-sloo}

The parametrix $v(t, x)$ satisfies the dispersion estimate
\[
\| \chi_\star v \|_{L^\infty} \leq C (ht )^{-1/2}
\| \tchi v_{0} \|_{L^1},
\]
where $0< t \leq \alpha h^{(1-m)/(1+m)} $, as well as the
corresponding Strichartz estimate
\[
\| v \|_{L^p_{\alpha h^{(1-m)/(1+m)} } L^q} \leq C
h^{-1/p} \|
\chi v_{0} \|_{L^2},
\]
for 
\[
\frac{2}{p} + \frac{1}{q} = \frac{1}{2}, \,\,\, q < \infty,
\]
and constants independent of $h$.

The cutoff propagator $\Sloo$ satisfies 
\[
\| \Sloo \|_{L^2 \to L^{p}_{\alpha h^{(1-m)/(1+m) + \beta}} L^{q}
}\leq C h^{-1/p} ,
\]
and
\[
\| \Sloo \|_{L^2 \to L^{p}_{\alpha h^{(1-m)/(1+m) }} L^{q}
}\leq C h^{-(1+\beta)/p} ,
\]
for $(p,q)$ in the same range and constants independent of $h$.

\end{lemma}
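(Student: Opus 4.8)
The plan is to treat three things in turn: the dispersion estimate for the parametrix $v$, the fixed--time Strichartz estimate for $v$, and the passage from $v$ to the cut--off propagator $\Sloo$.

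For the dispersion estimate I would start from $v=T_h w$, where $w$ is the oscillatory integral with phase $\phi$ from Lemma~\ref{L:Slooo} and amplitude $B=\sum_{j\leq K}B_j$, each $B_j$ being $\O(t^j)$ with bounded $x,\xi$--derivatives for $|t|\leq\alpha$. The Schwartz kernel of $w(t)$ is
\[
K_w(t,x,y)=(2\pi)^{-1}\int e^{i(\phi(t,x,\xi)-y\xi)}B(t,x,\xi)\,d\xi,
\]
the microlocal cutoffs suppressed (they are uniformly bounded on $L^1$ and on $L^\infty$). The phase in $\xi$ has Hessian $\phi_{\xi\xi}(t,x,\xi)=2t(1+\O(t))$ by Lemma~\ref{L:Slooo}, hence nondegenerate and comparable to $t$ for $0<|t|\leq\alpha$, so one--dimensional stationary phase gives $\|w(t)\|_{L^1\to L^\infty}\leq C|t|^{-1/2}$ uniformly for $|t|\leq\alpha$. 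I would then undo the scaling: the prefactor in $T_h$ is chosen precisely so that $\|T_h\|_{L^1\to L^1}=1$, while the dilation $t\mapsto h^{(m-1)/(m+1)}t$ turns this into a bound valid for $0<t\leq\alpha h^{(1-m)/(1+m)}$; multiplying by the spatial factor $h^{-1/(m+1)}$ and using $\frac{1}{m+1}+\frac{m-1}{2(m+1)}=\frac12$ collapses $h^{-1/(m+1)}(h^{(m-1)/(m+1)}t)^{-1/2}$ to exactly $(ht)^{-1/2}$, which is the claimed dispersion estimate.

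For the Strichartz estimate for $v$ I would run a $TT^*$ argument on $w$. The kernel of $w(t)w(s)^*$ is $\int e^{i(\phi(t,x,\xi)-\phi(s,x',\xi))}B(t,x,\xi)\overline{B(s,x',\xi)}\,d\xi$, the intermediate integration producing a delta in the dual frequency, and its phase in $\xi$ has Hessian $\phi_{\xi\xi}(t,\cdot)-\phi_{\xi\xi}(s,\cdot)=2(t-s)(1+\O(\cdot))$ by Lemma~\ref{L:Slooo}; stationary phase again gives $\|w(t)w(s)^*\|_{L^1\to L^\infty}\leq C|t-s|^{-1/2}$, while $\|w(t)\|_{L^2\to L^2}\leq C$ by the preceding lemma. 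The Keel--Tao theorem---or, in the non-endpoint range $q<\infty$, just the elementary $TT^*$ bound and Hardy--Littlewood--Sobolev---then yields $\|wv_0\|_{L^p_{[0,\alpha]}L^q}\leq C\|v_0\|_{L^2}$ for $\frac2p+\frac1q=\frac12$, $q<\infty$. Undoing the scaling with $\|T_hf\|_{L^2}=h^{-1/2(m+1)}\|f\|_{L^2}$ and the dilation $[0,\alpha]\mapsto[0,\alpha h^{(1-m)/(1+m)}]$, I expect the powers of $h$ to be bookkept so that the admissibility relation $\frac pq=\frac p2-2$ collapses the total exponent to $-1$, producing the loss $h^{-1/p}$.

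For the passage to $\Sloo$, write $U(t)=e^{itP/h}$ for the cut--off propagator. Since $v$ solves $(hD_t+P)v=E$ with $v|_{t=0}=\chi_\star^*v_0$, Duhamel gives $\Sloo(t)v_0=v(t)-\frac ih\int_0^t U(t-s)E(s)\,ds+\O(h^\infty)$, the remainder collecting commutators of the cutoffs $\psi_0,\chi_\epsilon$ and of the $\Shi$--projection with $U$, negligible on these time scales by the semiclassical calculus. On the short interval $T_1=\alpha h^{(1-m)/(1+m)+\beta}$ I would use the bound $\sup_{[0,T_1]}\|E\|_{L^2}\leq Ch^{2m/(m+1)}h^{\beta K}\|\chi_\star^*v_0\|_{L^2}$ established above: $L^2$--unitarity of $U$ and the factor $T_1/h$ give $\|\Sloo v_0-v\|_{L^\infty_{T_1}L^2}\leq Ch^{\beta(K+1)}\|v_0\|_{L^2}$, and, everything being localized to $|hD_x|\lesssim\epsilon$, the one--dimensional embedding $L^q\hookleftarrow H^{1/2-1/q}$ costs only a fixed power of $h^{-1}$, so after H\"older in $t$ on $[0,T_1]$ the Duhamel correction is $\ll h^{-1/p}\|v_0\|_{L^2}$ once $K$ is taken large; combining with the second estimate restricted to $[0,T_1]$ gives the third bound. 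For the full interval $T_2=\alpha h^{(1-m)/(1+m)}$, where $E$ no longer gains $h^{\beta K}$, I would instead partition $[0,T_2]$ into $N=\O(h^{-\beta})$ subintervals $I_j=[jT_1,(j+1)T_1]$; using that $V$ has a single degenerate maximum and is strictly monotone on each side, the part of $U(jT_1)v_0$ outside $\supp\psi_0$ contributes only $\O(h^\infty)$ back inside, so $\Sloo(jT_1+\tau)v_0=\Sloo(\tau)[\psi_0 U(jT_1)v_0]+\O(h^\infty)$ with $\|\psi_0 U(jT_1)v_0\|_{L^2}\leq\|v_0\|_{L^2}$; applying the third bound on each $I_j$ and summing in $\ell^p$ over the $N$ intervals turns $h^{-\beta}$ copies of $h^{-1/p}$ into $(h^{-\beta}h^{-1})^{1/p}=h^{-(1+\beta)/p}$, the fourth bound. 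The step I expect to be the main obstacle is this last one: off the short interval $T_1$ the parametrix error $E$ is only $\O(h^{2m/(m+1)})$ in $L^2$---exactly the threshold at which a direct energy estimate for the Duhamel term, or equivalently the dual local smoothing estimate of the Corollary to Theorem~\ref{T:lsm} (which gains $h^{1/2}$ but, after extracting the weight $|x|^{m-1}\langle x\rangle^{-m-1-3/2}$, loses $h^{(1-m)/2(m+1)}$), just fails to beat $h^{-1/p}$; recovering $h^{-(1+\beta)/p}$ is what forces the restriction to the shorter scale $T_1$, on which the ``lower order'' amplitude terms genuinely gain powers of $h$, followed by the subdivision, and this is exactly where the arbitrary $\beta>0$ loss enters the theorem, with the bookkeeping of how mass leaves and re--enters the core region $\{|x|\lesssim\delta h^{1/(m+1)}\}$ across the $I_j$ being where the monotonicity of $V$ is genuinely used.
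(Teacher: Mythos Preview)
Your treatment of the dispersion estimate for the parametrix $v$ is essentially the paper's argument: stationary phase in $\xi$ at the blown--up scale, then undo $T_h$ and check that the powers of $h$ collapse to $(ht)^{-1/2}$. The Strichartz estimate for $v$ then follows from Keel--Tao / Hardy--Littlewood--Sobolev in either approach.

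For the third estimate (the short interval $T_1=\alpha h^{(1-m)/(m+1)+\beta}$) you and the paper diverge slightly, though both routes work. The paper does not bound the Duhamel correction in the Strichartz norm; instead it runs a Gronwall energy estimate on $\mathcal E(t)=\|v-u\|_{L^2}^2$ (here $u=\Sloo(t)v_0$), combines this with the endpoint Sobolev embedding coming from the frequency localization, and concludes that $\|v-u\|_{L^\infty_x}\leq C|ht|^{-1/2}\|\chi_\star^* w_0\|_{L^1}$ on $[0,T_1]$. That is, the paper upgrades the \emph{dispersion} estimate from the parametrix to $\Sloo\Sloo^*$ itself, and only then invokes Keel--Tao. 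Your route---Duhamel, $L^\infty_t L^2_x$ bound on the correction, then Sobolev to $L^q$ and H\"older to $L^p$---also closes once $K$ is large enough, and buys you the Strichartz bound without first proving dispersion for $\Sloo$; the paper's route buys the slightly stronger pointwise dispersion for $\Sloo\Sloo^*$.

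Where you overcomplicate things is the fourth estimate. You insert an extra $\psi_0$ and then appeal to a ``mass does not return'' propagation statement to justify
\[
\Sloo(jT_1+\tau)v_0=\Sloo(\tau)\bigl[\psi_0\,U(jT_1)v_0\bigr]+\O(h^\infty).
\]
This is both unnecessary and, as stated, not justified: for generic $L^2$ data $v_0$ (and the operator norm in the lemma is over all of $L^2$), $U(jT_1)v_0$ can perfectly well have incoming mass just outside $\supp\psi_0$ that re--enters in time $\leq T_1$, so the claimed $\O(h^\infty)$ remainder does not follow from monotonicity of $V$ alone. The point is that no such argument is needed. Since $\Sloo(t)=C\,e^{itP/h}$ with $C$ a \emph{fixed} (time--independent) pseudodifferential cutoff, one has the exact identity $\Sloo(jT_1+\tau)=\Sloo(\tau)\circ e^{ijT_1P/h}$. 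Applying the third estimate on each subinterval and using only unitarity of the full group gives
\[
\|\Sloo(\cdot)v_0\|_{L^p_{I_j}L^q}\leq C h^{-1/p}\|e^{ijT_1P/h}v_0\|_{L^2}=Ch^{-1/p}\|v_0\|_{L^2},
\]
and summing the $N=\O(h^{-\beta})$ subintervals in $\ell^p$ yields $h^{-(1+\beta)/p}$ exactly as you say. This is the paper's (implicit) argument; drop the $\psi_0$ insertion and the propagation claim.
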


\begin{remark}
Observe that the parametrix satisfies good Strichartz estimates all
the way up to the critical time scale $t \sim h^{(1-m)/(m+1)}$, but we
are only able to conclude that the propagator obeys perfect Strichartz
estimates on a slightly shorter time scale, or obeys Strichartz
estimates with a small loss on the critical time scale.  This is an artifact of
working in the marginal calculus and trying to make error terms small
in $h$.
\end{remark}


\begin{proof}
We have
\begin{align*}
v & (t, x) \\
& = T_h w(t, x) \\
& = T_h (2 \pi )^{-1} \int e^{i \phi(t, x, \xi)
  - i y \xi } B(t, x, \xi) \chi_\star^* (y, D_y, h) w_0(y) dy d \xi \\
& =  h^{-1/(m+1)} (2
\pi )^{-1}  \int e^{i \phi(h^{(m-1)/(m+1)}t, h^{-1/(m+1)} x, \xi)
  } \\
& \quad \cdot e^{-iy\xi} B( h^{(m-1)/(m+1)}t, h^{-1/(m+1)} x, \xi
  )\chi_\star^* (y, D_y, h) w_0(y) dy d \xi \\
& = (2
\pi h )^{-1} \int e^{i \phi_\star (t, x, \xi)
  - i y \xi /h } B_{\star}(t, x, \xi ) T_h \chi_\star^* (y, D_y, h) w_0(y) dy d \xi,
\end{align*}
where we use the notation
\[
\phi_\star(t, x, \xi) = \phi(h^{(m-1)/(m+1)}t, h^{-1/(m+1)} x,
    h^{-m/(m+1)} \xi),
\]
and similarly for $B$.
We rewrite this expression as
\[
v_\star(t, x) = \int_y K_\star(t, x, y; h) \chi_\star v_{\star,0}(y)
dy,
\]
where
\[
K_\star(t, x, y;h) = (2 \pi h)^{-1} \int e^{i \phi_\star(t, x, \xi) - iy \xi/h}
B_{\star,0}(t, x, \xi) \tchi_\star(y, \xi; h) d \xi,
\]
 and
\[
\chi_\star  v_{\star,0}(y) = T_h \chi_\star^* (y, D_y, h) w_0(y).
\]
We have already computed the derivative properties of the functions
$\phi$ and $B$ in order to apply the lemma of stationary phase (with
$h$ as small parameter).  The unique critical point is at 
\[
\partial_\xi ( h \phi_\star(t, x, \xi) - y \xi) = 0,
\]
so the
leading asymptotic is
\begin{align*}
(2 \pi h)^{-1/2} & | \partial_\xi^2 (h \phi_\star (t, x, \xi) -y \xi )
|^{-1/2} \\
& = (2 \pi h)^{-1/2} | h
h^{-2m/(m+1)} \phi_{\xi \xi}|_{ (h^{(m-1)/(m+1)}t, h^{-1/(m+1)} x,
    h^{-m/(m+1)} \xi)} \\
& \sim h^{-1/2} | h
h^{-2m/(m+1)} h^{(m-1)/(m+1)}t |^{-1/2} \\
& = | h t|^{-1/2},
\end{align*}
as claimed.  The Strichartz estimates follow immediately.

We now estimate the difference between the propagator and the
parametrix in the $L^\infty_x$ norm to prove that the actual
propagator has the correct dispersion, at least on a slightly shorter
time scale.  Let $u(t,x) = \Sloo(t) v_0(x)$, so that
\[
\begin{cases}
(hD_t + P ) (v-u) = E, \\
(v-u)|_{t = 0 } = 0.
\end{cases}
\]
Since the propagator and the parametrix are compactly
essentially supported in frequency on scale $h^{-1}$ we have the
endpoint Sobolev embeddings:
\[
\sup_{|t| \leq \alpha h^{(1-m)/(m+1) + \beta}}\| v-u \|_{L^\infty_x}
\leq h^{-1/2} \sup_{|t| \leq \alpha h^{(1-m)/(m+1) + \beta}}\| v-u
\|_{L^2_x}.
\]
Let the energy $\EE(t) = \| v-u\|_{L^2}^2$, and compute
\begin{align*}
\EE'& =2 \Re \frac{i}{h} \int E \overline{ (v-u)} dx \\
& \leq h^{-1} h^{(1-m)/(m+1) + \beta} \| E \|_{L^2_x}^2 + h^{(m-1)/(m+1)
  + \beta} \EE,
\end{align*}
and hence by Gronwall's inequality,
\begin{align*}
\EE(t) & \leq C h^{-2m/(m+1) + \beta} \| E \|_{L^2_t L^2_x}^2 \\
& \leq C h^{(1-3m)/(m+1) + 2\beta } \| E \|_{L^\infty_{h^{(1-m)/(m+1)
      + \beta}} L^2_x }^2 \\
& \leq C h^{1 +2(K+1) \beta} \| \chi^*_\star w_0 \|_{L^2}^2 \\
& \leq C h^{2(K+1)\beta} \| \chi^*_\star w_0 \|_{L^1_x}^2.
\end{align*}
We finally conclude
\begin{align*}
\sup_{|t| \leq \alpha h^{(1-m)/(m+1) + \beta}}\| v-u \|_{L^\infty_x}
& \leq C h^{-1/2 + (K+1) \beta} \| \chi^*_\star w_0 \|_{L^1_x} \\
& \leq C |ht|^{-1/2} \| \chi^*_\star w_0 \|_{L^1_x} ,
\end{align*}
provided $| t | \leq \alpha h^{(1-m)/(m+1) + \beta}$ and $K$ is
sufficiently large that
\[
-\frac{1}{2} + (K+1) \beta \geq -\frac{1}{m+1} - \frac{\beta}{2}.
\]
The Strichartz estimates for $\Sloo(t)$ follow immediately.

\end{proof}

\subsection{The parametrix for $\Slojp^+(t)$}
The operators $\Slojp^+(t)$ are the propagator localized to outgoing 
frequencies $-ax^m \leq \xi \leq 2 \epsilon$ in the
spatial interaction region $\{ \delta h^{1/(m+1)} /2 \leq \pm x \leq
2\epsilon \}$.  We have divided the spatial interaction region into
$h$-dependent geometric regions; $\Slojp^+(t)$ is localized to 
\[
x \in h^{1/(m+1)} I_j := [h^{1/(m+1)}\delta (\omega^j -
  \omega^{j-2}) , h^{1/(m+1)}\delta (\omega^{j+1} + \omega^{j-1} )] .
\]
The symbol $\tchi((\xi + ax^m)/\gamma x^m)$ is invariant under the
rescaling operation, so after applying the rescaling operators, we are
interested in constructing a parametrix in the regions
\[
-ax^m \leq \xi \leq 2 \epsilon h^{-m/(m+1)}, \,\,  x \in I_j .
\] 

When the derivative of the effective potential $\tV'$ is large, singularities propagate
away quickly, however not uniformly so.  We introduce a loss by
constructing $\log(1/h)$ parametrices, and by eventually restricting
our construction to subcritical time scales.


We now compute how long it takes a wave packet to exit the interval
$I_j$.  Write 
\[
I_j = [y_j^-, y_j^+]:=  [\delta (\omega^j -
  \omega^{j-2}) , \delta (\omega^{j+1} + \omega^{j-1} )] , 
\]
and fix an initial point $(y, \eta)$
with $y \in I_j$, $\eta \geq -a(y_j^+)^m$.  Then recalling the
Hamiltonian system \eqref{E:Ham-q}, we have  
\[
x (t) \geq y - 2t a (y^+_{j})^m \geq \frac{1}{2} y_j^-
\]
as long as
\[
0 \leq t \leq \frac{y_j^- (y_{j}^+)^{-m}}{4a}.
\]
We have
\[
y_j^+ =  y_j^-(\omega + \O(\omega^{-1})),
\]
so that $x(t) \geq y_j^-/2$ provided
\[
0 \leq t \leq \frac{(y_j^-)^{1-m}}{4 a \omega^m} (1 + \O(\omega^{-1})).
\]
In this case, 
\[
-\partial_x \tV \geq (y_j^-/2)^{2m-1},
\]
which in turn implies
\[
\xi \geq -a (y_{j}^+)^m + t(y_j^-/2)^{2m-1} \geq b ( y_{j}^-)^m,
\]
provided
\[
t \geq 2^{2m-1}( a\omega^{m}(1 + \O(\omega^{-1})) + b) (y_j^-)^{1-m}.
\]
Choosing $a, b>0$ sufficiently small means we can assume $\eta \geq
b (y_j^-)^{m}$ after a time comparable to at most $(y_j^-)^{1-m}$.

We now compute how long it takes to leave $I_j$ assuming $y \in I_j$
and $\eta \geq b (y_j^-)^m$.  We have
\begin{align*}
x & = y + 2 t \eta + \int_0^t (t -s) B(x(s)) ds \\
& \geq y_j^- + 2 t b (y_j^-)^m + \int_0^t (t-s) B(y_j^-) ds \\
& \geq  y_j^- + 2tb (y_j^-)^m + \frac{1}{2} t^2 (y_j^-)^{2m-1} \\
& \geq y_{j}^+ 
\end{align*}
provided
\[
t \geq (y_j^-)^{1-m} \left( -2b + \sqrt{4 b^2 + 2 (y_{j}^+/y_j^- - 1)}
\right),
\]
which is again comparable to $(y_j^-)^{1-m}$.



We now estimate for $t = \alpha (y_j^-)^{1-m}$, for $\alpha>0$ to be determined:
\begin{align*}
\left| \frac{\partial x}{\partial y}(t)  \right| & \leq 1 + \int_0^t
(t-s) (4m-2) (y_{j}^+)^{2m-2} ds \left| \frac{\partial x}{\partial y}(t)
\right| \\
& \leq 1 +  (2m-1)t^2 (y_{j}^+)^{2m-2} \left| \frac{\partial x}{\partial y}(t)
\right|  \\
& \leq 1 + C_{\omega,m, a, b} \alpha^2 \left| \frac{\partial x}{\partial
    y}(t)  \right|.
\end{align*}
Choosing $\alpha>0$ sufficiently small (but independent of $h$) shows
that 
\[
\left| \frac{\partial x}{\partial y}(t)  \right|  \leq C
\]
uniformly for $t$ in this range.

With this estimate in hand, we can compute $\partial x / \partial \eta
= 2t(1 + \O(t))$ as usual, which results in the following Lemma.  In
practice, we need to gain some powers of $h$ in our parametrix
construction, so we only construct the parametrix up to time $t \sim
h^{\epsilon/2} (y_j^-)^{1-m}$ for a small $\epsilon>0$, and then iterate $C h^{-\epsilon}$
times.  After time $t \sim h^{-\epsilon/2} (y_j^-)^{1-m}$ then the
wavefront set will be outside the interval $I_j$.  Let us state the
following lemma for the short $h$-independent time scale $0 \leq t \leq \alpha
(y_j^-)^{1-m}$; we will worry about summing over the $h$-dependent
number of time intervals after
constructing the amplitude.

\begin{lemma}
\label{L:Sloojpm}
There exists $\alpha, a >0,$ and $\omega>1$ independent of $h$ and $j$
such that for each $0 \leq j \leq \O( \log(1/h))$, there is a phase function $\phi(t, x, \xi)$ satisfying
\[
\begin{cases}
\phi_t + \phi_x^2 + \tV(x;h) = 0, \\
\phi(0, x, \eta) = x \eta
\end{cases}
\]
for $x \in I_j$, $-a (y_j^+)^m \leq
\xi \leq 2 \epsilon h^{-m/(m+1)}$, and $|
t | \leq \alpha (y_j^-)^{1-m}$.

We further have 
\[
\phi_{\eta \eta} \sim 2 t(1 + \O(t)),
\]
for $| t | \leq \alpha (y_j^-)^{1-m}$.
\end{lemma}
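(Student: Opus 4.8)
The plan is to reproduce the Hamilton--Jacobi construction of Lemma~\ref{L:Slooo}, with the fixed interval $|t|\leq\alpha$ replaced by the scale-dependent interval $|t|\leq\alpha(y_j^-)^{1-m}$, and to use the approximate dilation symmetry of the rescaled dynamics to make every constant uniform in $j$. As before, set $q=\xi^2+\tV(x;h)$, write $B(x)=-\partial_x\tV(x;h)$, and consider the Hamiltonian system \eqref{E:Ham-q} with data $x(0)=y\in I_j$, $\xi(0)=\eta$, $-a(y_j^+)^m\leq\eta\leq2\epsilon h^{-m/(m+1)}$. The computation already carried out in the proof of Lemma~\ref{L:Slooo} gives, in the rescaled variable,
\[
B(x)=2x^{2m-1}\bigl(1+(h^{1/(m+1)}x)^{2m}\bigr)^{-1/m-1}+\O\bigl(h^{2m/(m+1)}x^{2m-3}\bigr),
\]
so that on $\bigcup_j I_j$, where $h^{1/(m+1)}x\leq2\epsilon$, the derivative $B(x)$ has the same sign as $x$ and $B'(x)$ is strictly positive and comparable to $x^{2m-2}$, provided $\epsilon$ (and $\delta$) are fixed small enough at the outset; the corrections coming from $A^{-2}$ not being exactly $1-x^{2m}$ and from the subpotential $h^2V_1$ are then uniformly negligible across all scales.

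First I would invoke the a priori control on the flow already established above: for $y\in I_j$ and $\eta\geq-a(y_j^+)^m$ one has $x(t)\geq\tfrac12 y_j^-$ on a time interval comparable to $(y_j^-)^{1-m}$ (in fact $\xi(t)\geq b(y_j^-)^m$ after such a time), and the upper bound $|\partial x/\partial y(t)|\leq C$ holds uniformly for $|t|\leq\alpha(y_j^-)^{1-m}$ once $\alpha$ is small. It remains to produce the matching lower bound $\partial x/\partial y(t)\geq1$ on the same interval. From
\[
\frac{\partial x}{\partial y}(t)=1+\int_0^t(t-s)B'(x(s))\frac{\partial x}{\partial y}(s)\,ds,
\]
the positivity of $B'$ along the trajectory, which stays in $\{x\geq\tfrac12 y_j^->0\}$, together with a first-exit/continuity argument forces $\partial x/\partial y$ to remain $\geq1$. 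Hence $y\mapsto x(t)$ is invertible for $|t|\leq\alpha(y_j^-)^{1-m}$, and passing to $(x,\eta)$ as coordinates produces the generating function $\phi(t,x,\eta)$ with $\phi_\eta=y$, $\phi_x=\xi$, solving $\phi_t+\phi_x^2+\tV=0$ with $\phi(0,x,\eta)=x\eta$, exactly as in Lemma~\ref{L:Slooo}.

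For the second assertion I would write $\phi_{\eta\eta}=\partial y/\partial\eta=(\partial y/\partial x)(\partial x/\partial\eta)$; since $\partial y/\partial x$ is pinched between positive constants it suffices to estimate $\partial x/\partial\eta$, which satisfies $\partial x/\partial\eta=2t+\int_0^t(t-s)B'(x(s))(\partial x/\partial\eta)(s)\,ds$. The crucial point is that $t^2\sup_{s\leq t}|B'(x(s))|$ is scale invariant: on $I_j$ it is of size $\bigl((y_j^-)^{1-m}\bigr)^2(y_j^-)^{2m-2}\sim1$, hence $\O(\alpha^2)$ when $|t|\leq\alpha(y_j^-)^{1-m}$, so a Neumann series (or Gronwall) gives $\partial x/\partial\eta=2t(1+\O(t))$ with constants independent of $j$ and $h$, and therefore $\phi_{\eta\eta}\sim2t(1+\O(t))$.

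The main obstacle, and the reason one must be careful, is the uniformity in $j$: one has to choose $\alpha$, $a$, $\omega$ (and earlier $\epsilon$, $\delta$) independent of $h$ and of the scale index $0\leq j\leq\O(\log(1/h))$. This rests on the fact that to leading order the rescaled equation $\ddot x=B(x)\approx2x^{2m-1}$ is invariant under $(x,t)\mapsto(\lambda x,\lambda^{1-m}t)$, so the dynamics on $I_j$ is, up to uniformly small errors, a dilate of the dynamics on $I_0$; what must be verified is that the non-leading part of $B$ --- the factor $(1+(h^{1/(m+1)}x)^{2m})^{-1/m-1}$ and the $h^2V_1$ term --- is a uniformly small perturbation on \emph{every} scale, including the largest one $j=N(h)$ where $h^{1/(m+1)}x\sim\epsilon$, which is exactly what the smallness of $\epsilon$ buys. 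Once this is secured, the argument of Lemma~\ref{L:Slooo} applies verbatim on each $I_j$ with $t$ rescaled by $(y_j^-)^{1-m}$; the negative-time range is handled by the same argument run backwards, and in any case only $t\geq0$ is needed here, $\Slojp^-$ and $\Slojm^+$ being recovered by time reversal in the partition of unity.
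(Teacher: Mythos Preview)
Your proposal is correct and follows essentially the same approach as the paper. The paper's own proof is extremely terse---it consists of the estimate $|\partial x/\partial y|\leq 1+C_{\omega,m,a,b}\alpha^2|\partial x/\partial y|$ carried out just before the lemma (which is the scale-invariance observation you highlight), followed by ``With this estimate in hand, we can compute $\partial x/\partial\eta=2t(1+\O(t))$ as usual''; you have simply filled in the details the paper leaves implicit, in particular the lower bound on $\partial x/\partial y$ from positivity of $B'$ and the explicit verification that the non-leading terms in $B$ are uniformly negligible across all scales $j$.
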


We now construct the amplitude for the parametrix for
the operator $\Slojp^+ (t)$.  This, combined with
Lemma \ref{L:Sloojpm}, will be used to compute a
dispersion estimate, resulting in a
Strichartz estimate.  The problem is that, just as in Subsection
\ref{SS:sloo}, we are working in a marginal calculus, so to construct
the amplitude as an asymptotic series, we must restrict the range of
$t$ to depend mildly on $h$.


We again appeal to the Frobenius theorem to get a function 
$\Gamma(t,x)$ (again implicitly depending on the frequency $\xi$) satisfying
\[
\begin{cases}
\partial_t \Gamma + 2 \phi_x \partial_x \Gamma = 0, \\
\Gamma(0,x) = x.
\end{cases}
\]
We then construct $B = \sum_{j = 0}^KB_j$ for sufficiently large $K$ to
be determined (independent of $h$) with 
\[
\begin{cases}
B_0 \equiv 1, \\
B_j = -\int_0^t \phi_{xx} B_{j-1} |_{(s, \Gamma(t-s,x))} + i
B_{j-1,xx} |_{(s, \Gamma(t-s,x))}.
\end{cases}
\]
A tedious induction argument shows that $B_j$ satisfies
\[
| \partial_x^l B_j | = \O \left( \sum_{k = 1}^j \left| t^{k+j}
    x^{2km-2j-l} \right| \right).
\]



Then 
\[
w(t, x) = (2 \pi)^{-1} \int e^{i \phi(t, x, \xi) - i y \xi }
B(t, x, \xi) \chi(y, D_y)^* w_0(y) dy d \xi
\]
solves
\[
\begin{cases}
\tP w = \tE, \\
w(0, x) = \chi_\star (x, D_x)^* w_0(x),
\end{cases}
\]
where 
\[
\chi_\star = T^{-1}_h \psi( \pm  \omega^j x/h^{1/(m+1)}
 )
 ( 1 - \11_{\{\pm hD_x \geq
   1-V(x)\} } (1-\chi_{\epsilon^2} ((P-1))) ) \chi_\epsilon(x) T_h
\]
is the appropriate microlocal cutoff, and the equation is
understood to make sense for $|t| \leq \alpha (y_j^-)^{1-m}$.  
Here, the error $\tE$ is given by
\[
\tE  = (2 \pi)^{-1} \int e^{i \phi(t, x, \xi) - i y
  \xi} (-\partial_x^2 B_K -i \phi_{xx} B_K )\chi^*(y,
D_y) w_0(y) dy d \xi .
\]
That is, $\tE$ is an oscillatory integral operator with the same phase
function as $w$.  Having computed the symbol of the error term $\tE$ to be $-\p_x^2 B_K
- i \phi_{xx} B_K$, in the rescaled coordinates we have for $|t | \leq
h^{\beta/2} |x|^{1-m}$, 
\begin{align*}
 -\p_x^2 B_K
- i \phi_{xx} B_K  & = \O \left( \sum_{l = 1}^{K+1} | 
  t|^{l + K} | x|^{2ml - 2K -2} \right) \\
& = \O\left(\sum_{l = 1}^{K+1} h^{(l+K)\beta/2} | x |^{(m+1)(l-K) -2 }
\right) \\
& = \O ( h^{(1+K) \beta/2} |x |^{m-1} )
\end{align*} 
in the worst case when $l = K+1$.  Now since $| x | \leq
h^{-1/(m+1)}$, this error term is of order $\O(h^{(1 + K ) \beta /2 +
  (1-m)/(1+m) } )$, which is small as $K$ gets large.

If we now take $v = T_h w$, we see
\begin{align*}
P v & = T_h T_h^{-1} P T_h w \\
& = h^{2m/(m+1)} T_h \tP w \\
& = E,
\end{align*}
with initial conditions
\[
v(0, x) = T_h w(0,x),
\]
and where
\[
E = h^{2m/(m+1)} T_h \tE.
\]

A similar computation to Subsection \ref{SS:sloo} shows 
\begin{align*}
\sup_{0 \leq t \leq \alpha h^{ \beta/2} |y_j^-|^{1-m} }\| E
\|_{L^2} \leq C h^{2m/(m+1)} h^{\beta (1+K)/2 +(1-m)/(1+m)}\| \chi_\star^* v_0 \|_{L^2}.
\end{align*}

The following lemma contains the dispersion and Strichartz estimates
for the operators $\Slojp^+(t)$.

\begin{lemma}
\label{L:disp-slojp}

The parametrix $v(t, x)$ satisfies the dispersion estimate
\[
\| \chi_\star v \|_{L^\infty} \leq C (ht )^{-1/2}
\| \tchi v_{0} \|_{L^1},
\]
where $0< t \leq  \alpha h^{ \beta/2} |y_j^-|^{1-m} $, as well as the
corresponding Strichartz estimate
\[
\| v \|_{L^p_{ \alpha h^{ \beta/2} |y_j^-|^{1-m}  } L^q} \leq C
h^{-1/p} \|
\chi v_{0} \|_{L^2},
\]
for 
\[
\frac{2}{p} + \frac{1}{q} = \frac{1}{2}, \,\,\, q < \infty,
\]
and constants independent of $h$.

The cutoff propagator $\Slojp^+$ satisfies 
\[
\| \Slojp^+ \|_{L^2 \to L^{p}_{ \alpha h^{ \beta/2} |y_j^-|^{1-m} } L^{q}
}\leq C h^{-1/p} ,
\]
and
\[
\| \Slojp^+ \|_{L^2 \to L^{p}_{\alpha h^{(1-m)/(1+m) }} L^{q}
}\leq C h^{-(1+\beta)/p} ,
\]
for $(p,q)$ in the same range and constants independent of $h$.

\end{lemma}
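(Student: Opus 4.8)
The plan is to repeat, essentially line for line, the argument that proved Lemma~\ref{L:disp-sloo} for $\Sloo(t)$, feeding in the phase function of Lemma~\ref{L:Sloojpm} and the amplitude expansion $B = \sum_{j=0}^K B_j$ constructed above in place of the data used near the trapped set in Subsection~\ref{SS:sloo}. First I would write $v = T_h w$ and unwind the rescaling to put the parametrix in the form $v_\star(t,x) = \int K_\star(t,x,y;h)\,\chi_\star v_{\star,0}(y)\,dy$, a semiclassical oscillatory integral whose phase $\phi_\star$ and amplitude $B_\star$ are the $T_h$-rescalings of $\phi$ and $B$. Using the amplitude bounds $|\p_x^l B_j| = \O(\sum_k |t^{k+j}x^{2km-2j-l}|)$ together with the rescaled-time restriction $|t| \leq h^{\beta/2}|x|^{1-m}$, the symbol $B_\star$ and its $x$- and $\xi$-derivatives are bounded --- uniformly in $h$ and, crucially, in $j$, up to harmless powers of $h$ that are absorbed into $\beta$ --- so stationary phase in $\xi$ applies with $h$ as the small parameter. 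The stationary point is where $\p_\xi(h\phi_\star - y\xi) = 0$, and since $\phi_{\eta\eta} \sim 2t(1+\O(t))$ by Lemma~\ref{L:Sloojpm}, the leading term is $h^{-1/2}|\p_\xi^2(h\phi_\star - y\xi)|^{-1/2} \sim h^{-1/2}|h\cdot h^{-2m/(m+1)}\cdot h^{(m-1)/(m+1)}t|^{-1/2} = |ht|^{-1/2}$, which is the claimed dispersion bound for the parametrix; the accompanying Strichartz estimate for the parametrix then follows from the $TT^*$ machinery of Keel--Tao \cite{KT}, exactly as for $\Sloo(t)$.

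Next I would pass from the parametrix to the cutoff propagator $u(t,x) = \Slojp^+(t)v_0(x)$. The difference $v-u$ solves $(hD_t + P)(v-u) = E$ with zero initial data, where $E = h^{2m/(m+1)}T_h\tE$ satisfies the $L^2$ bound recorded just before the lemma. Since $v$ and $u$ are essentially frequency-localized at scale $h^{-1}$, an $h$-Sobolev embedding reduces the required $L^\infty_x$ control of $v-u$ to an $L^2_x$ bound, which I would obtain from a Gronwall argument on $\EE(t) = \|v-u\|_{L^2}^2$ exactly as in the proof of Lemma~\ref{L:disp-sloo}; choosing the length $K$ of the amplitude expansion large enough, depending only on $m$ and $\beta$, forces this error to be dominated by $|ht|^{-1/2}$. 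This yields the dispersion estimate, hence the perfect Strichartz estimate, for $\Slojp^+$ on the short time interval on which the amplitude construction is valid.

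To reach the critical time scale $\alpha h^{(1-m)/(1+m)}$ I would iterate the construction over $\O(h^{-\beta})$ consecutive intervals of the short length: at the start of each interval one restarts the parametrix from the current, still frequency-localized, state, and the bounded travel-time computations preceding Lemma~\ref{L:Sloojpm} guarantee that after a controlled number of steps the $h$-wavefront set of the solution has left $I_j$, beyond which the microlocal cutoffs defining $\Slojp^+$ make the remaining contribution $\O(h^\infty)$, so the iteration terminates. Because the cutoff propagator is $L^2$-bounded, the $L^2$ norm of the restarted data at each step is $\leq C\|v_0\|_{L^2}$, and summing the $L^p_T L^q$ norms over the $\O(h^{-\beta})$ pieces costs a factor $h^{-\beta/p}$ in the constant, giving $\|\Slojp^+\|_{L^2 \to L^p_{\alpha h^{(1-m)/(1+m)}}L^q} \leq C h^{-(1+\beta)/p}$ as claimed.

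The step I expect to be the main obstacle is not any single estimate --- each is a minor variant of the $\Sloo(t)$ case --- but the demand that every constant above be independent of $j$, so that the eventual summation over the $\O(\log(1/h))$ pieces $I_j$ of the transition region costs only the budgeted logarithmic factor rather than a genuine power of $h$. This forces one to track carefully how the rescaling $T_h$ and the $x$-weights in the amplitude bounds $|\p_x^l B_j| = \O(\sum_k |t^{k+j}x^{2km-2j-l}|)$ interact across the interval $I_j$, and it is precisely here --- working in the marginal, scale-invariant calculus, where the rescaled operator $\tP$ carries no genuine small parameter --- that the bookkeeping becomes delicate and the construction must be cut off at the subcritical time $|t| \leq h^{\beta/2}|x|^{1-m}$ to make the error terms gain the powers of $h$ needed to close the estimate.
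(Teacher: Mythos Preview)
Your proposal is correct and follows exactly the paper's approach: the paper's proof of Lemma~\ref{L:disp-slojp} consists of the single remark that the argument is the same as for Lemma~\ref{L:disp-sloo} with the different time interval, together with the observation that summing over $h^{-\beta}$ intervals of length $h^{\beta/2}|y_j^-|^{1-m}$ yields total length $h^{-\beta/2}|y_j^-|^{1-m}$, after which Egorov's theorem makes the contribution $\O(h^\infty)$. Your write-up expands this into precisely the stationary-phase, Gronwall, and iterate-then-terminate steps the paper has in mind, and your emphasis on $j$-uniformity of the constants is the right bookkeeping concern (handled implicitly in the paper via the uniform estimates on $\phi$ and $B_j$ established before the lemma).
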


The proof is exactly the same as the proof of Lemma \ref{L:disp-sloo},
with the exception of the different time interval.  If we sum over
$h^{-\beta}$ intervals of length $h^{\beta/2} | y_j^- |^{1-m}$
results in an interval of length  $h^{-\beta/2} | y_j^- |^{1-m}$.
According to Lemma \ref{L:Sloojpm} (combined with the Egorov theorem in the
$h^{-1/2+\beta}$ calculus), after this time the parametrix and the
error are both $\O(h^\infty)$.

\subsection{Proof of Proposition \ref{P:mode-str}}

In this subsection, we see how to use the computed Strichartz estimates plus the
local smoothing from \cite{ChWu-lsm} to prove Proposition \ref{P:mode-str}.

From the semiclassical Strichartz estimates, if we let $v(t, x) =
v_{lk}(th,x)$ as in Proposition \ref{P:mode-str} and rescale appropriately, we get
\[
\| \chi v_{lk} \|_{L^{2n}_T L^{2^\star} } \leq C_\beta \| \lll k \rrr^{\beta}
v_{lk}^0 \|_{L^2},
\]
for $T \leq \epsilon k^{-2/(m+1)}$, and where $\chi \in \Ci_c$ is any
smooth, compactly supported function.  Recall that according to Lemmas
\ref{L:L}, we already have perfect Strichartz
estimates for $(1-\chi) 
v_{lk}$ if $\chi \equiv 1$ near $x = 0$.  Further, by Lemma
\ref{L:Shi}, we have perfect Strichartz estimates for large
frequencies and small $x$: if $\psi(\xi) \equiv 1$ near $0$, $\chi
(1-\psi(-h^2 \Delta)) v_{lk}$ obeys perfect Strichartz estimates.

Let $\chi$ and $\psi$ be such cutoffs.  In order to estimate $\chi \psi v_{lk}$, we
employ a duality trick (see \cite{BGH}) together with the local smoothing estimates
from \cite{ChWu-lsm}.  Let $\phi(s) \in \Ci_c$ be a compactly
supported function such that
\[
\sum_{j = 0}^{k^{2/(m+1)}} \phi ( k^{2/(m+1)} t - j ) \equiv 1, \,\,\,
0 \leq t \leq \epsilon.
\]
Set $U_j = \phi ( k^{2/(m+1)} t - j ) \chi \psi v_{jk}$.  We have
\[
(D_t + P_k) U_j = W_j' + W_j'',
\]
where
\[
W_j' = i k^{2/(m+1)} \phi' ( k^{2/(m+1)} t - j ) \chi \psi v_{jk},
\]
and
\[
W_j'' = \phi ( \chi''  + 2 \chi' \p_x) \psi v_{lk}.
\]
The important thing to observe is that $W_j''$ is supported away from
$x = 0$, so the standard $1/2$ derivative local smoothing estimates
hold (see Theorem \ref{T:lsm}).  Let $\chi_1 \in \Ci_c$ satisfy
$\chi_1 \equiv 1$ on $\supp \chi$, and $\chi_2 \in \Ci_c$ satisfy
$\chi_2 \equiv 1$ on $\supp \chi'$, $\supp \chi_2$ away from $x = 0$.
We have $\chi_1 U_j = U_j$, $\chi_1 W_j' = W_j'$, and $\chi_2 W_j'' =
W_j''$.  Using the Duhamel formula, set
\[
U_j' = \chi_1 \int_{(j-1) \epsilon k^{-2/(m+1)} }^t e^{-i(t-s) P_k }
\chi_1 W_j'(s) ds,
\]
and
\[
U_j'' = \chi_1 \int_{(j-1) \epsilon k^{-2/(m+1)} }^t e^{-i(t-s) P_k }
\chi_2 W_j''(s) ds,
\]
so that $U_j' + U_j'' = U_j$.  By the Christ-Kiselev lemma \cite{CK},
it suffices to consider 
\[
\overline{U}_j' = \chi_1 \int_{(j-1) \epsilon k^{-2/(m+1)} }^{(j+1) \epsilon k^{-2/(m+1)} } e^{-i(t-s) P_k }
\chi_1 W_j'(s) ds,
\]
and similarly for $W_j''$.  Let $I = [(j-1) \epsilon k^{-2/(m+1)} ,
(j+1) \epsilon k^{-2/(m+1)} ]$ be the time interval in the integral
above.  We apply the Strichartz estimates to get
\[
\| \overline{U}_j' \|_{L^{2n}_I L^{2^\star}} \leq C k^{\beta} \left\| \int_{(j-1) \epsilon k^{-2/(m+1)} }^{(j+1) \epsilon k^{-2/(m+1)} } e^{is P_k }
\chi_1 W_j'(s) ds \right\|_{L^2}, 
\]
and similarly for $W_j''$.  The dual estimates to Theorem \ref{T:lsm}
then 
yield
\[
\| \overline{U}_j' \|_{L^{2n}_I L^{2^\star}} \leq C k^{\beta-1/(m+1)} \| 
W_j'  \|_{L^2 L^2}, 
\]
and (again because $\chi_2$ is supported away from $x = 0$)
\[
\| \overline{U}_j'' \|_{L^{2n}_I L^{2^\star}} \leq C k^{\beta-1/2} \| 
W_j''  \|_{L^2 L^2}.
\]
By the Christ-Kiselev lemma \cite{CK}, the same estimates hold for
$U_j'$ and $U_j''$.  Squaring and summing in $j$, using $\ell^2
\subset \ell^{2n}$, yields
\begin{align*}
\| v_{lk} \|_{L^{2n}_{\epsilon} L^{2^\star}}^2 & \leq C \sum_{j =
  0}^{k^{2/(m+1)}} ( \| U_j' \|_{L^{2n}_{\epsilon} L^{2^\star}}^2 + \|
U_j'' \|_{L^{2n}_{\epsilon} L^{2^\star}}^2) \\
& \leq C \sum_{j =
  0}^{k^{2/(m+1)}} (k^{2 \beta - 2/(m+1)} \| 
W_j'  \|_{L^2 L^2}^2 +  k^{2\beta-1} \| 
W_j''  \|_{L^2 L^2}^2 ) \\
& \leq C ( k^{2 \beta + 2/(m+1)}  \| \chi v_{jk} \|_{L^2_{\epsilon}
  L^2}^2 +  k^{2\beta-1} \| \chi_2 \lll D_x \rrr v_{lk}
\|_{L^2_\epsilon L^2}^2 ) \\
& \leq C k^{2\beta} \| v_{lk}^0 \|_{L^2}^2.
\end{align*}
This proves Proposition \ref{P:mode-str}.


\section{Quasimodes}

In this section we construct quasimodes for the model operator near
$(0,0)$ in the transversal phase space, and then use these quasimodes
to show the Strichartz estimates are near-sharp, in the sense
described in Corollary \ref{C:C1a}.

Consider the model operator
\[
P = -h^2 \partial_x^2 - m^{-1} x^{2m}
\]
locally near $x = 0$.  We will construct quasimodes which are
localized very close to $x = 0$, so this should be a decent
approximation.  It is well-known (see \cite{ChWu-lsm}) that the operator
\[
\tilde{Q} = -\partial_x^2 + x^{2m}
\]
has a unique ground state $\tilde{Q} v_0 = \lambda_0 v_0$, with
$\lambda_0>0$, and $v_0$ is a Schwartz class function.  Then, by rescaling, we find the function $v(x) = v_0 ( x
h^{-1/(m+1)})$ is an un-normalized eigenfunction for the equation
\[
(-h^2 \partial_x^2 + x^{2m} ) v = h^{2m/(m+1)} \lambda_0 v.
\]
Complex scaling then suggests there are resonances with imaginary part $c_0
h^{2m/(m+1)}$.  We use a complex WKB approximation to get an explicit
formula for a localized approximate resonant state, however, as we
shall see, it is not a very good approximation.  Nevertheless, since
we will eventually be averaging in time, it is sufficient for our applications.

Let $E_0 = (\alpha + i \mu)h^{2m/(m+1)} $, $\alpha, \mu>0$
independent of $h$.  Let the phase function 
\[
\phi(x) = \int_0^x (E + m^{-1} y^{2m})^{1/2} dy,
\]
where the branch of the square root is chosen to have positive
imaginary part.  Let 
\[
u(x) = (\phi')^{-1/2} e^{i \phi  / h},
\]
so that
\[
(hD)^2 u = (\phi')^2 u + f u,
\]
where
\begin{align*}
f & = (\phi')^{1/2} (hD)^2 (\phi')^{-1/2} \\
& = -h^2 \left( \frac{3}{4} (\phi')^{-2} (\phi'')^2 - \frac{1}{2}
  (\phi')^{-1} \phi ''' \right).
\end{align*}

\begin{lemma}
The phase function $\phi$ satisfies the following properties:
\begin{description}

\item[(i)]  There exists $C>0$ independent of $h$ such that 
\[
| \Im \phi | \leq C\begin{cases} h(1 + \log(x/h^{1/2} )), \quad m =
  1, \\
 h, \quad m \geq 2.
\end{cases}
\]
In particular, if $| x | \leq C h^{1/(m+1)}$, $| \Im \phi| \leq C'$
for some $C'>0$ independent of $h$.

\item[(ii)]  There exists $C>0$ independent of $h$ such that 
\[
C^{-1} \sqrt{ h^{2m/(m+1)} + x^{2m} } \leq | \phi'(x) | \leq C \sqrt{
  h^{2m/(m+1)} + x^{2m} }
\]

\item[(iii)]
\[
\begin{cases}
\phi' = (E + m^{-1} x^{2m})^{1/2}, \\
\phi'' =  x^{2m-1} (\phi')^{-1}, \\
\phi''' = \left( (1 - m^{-1} ) x^{4m-2} + E  (2m-1) x^{2m-2}
\right) ( \phi')^{-3}, 
\end{cases}
\]
In particular,
\[
f = -h^2 x^{2m-2} \left( \left( \frac{1}{4}  + \frac{1}{2 m}
  \right) x^{2m} - \left( m - \frac{1}{2} \right) E \right) (\phi'
)^{-4}.
\]

\end{description}

\end{lemma}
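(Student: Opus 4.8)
The plan is to establish the three assertions in reverse order, since (iii) is a direct differentiation, (ii) a short comparison of real part to modulus, and (i) the only one requiring real work. For (iii) I would differentiate $\phi(x) = \int_0^x (E + m^{-1}y^{2m})^{1/2}\,dy$ directly: the fundamental theorem of calculus gives $\phi' = (E + m^{-1}x^{2m})^{1/2}$; differentiating once more, using $\frac{d}{dx}(m^{-1}x^{2m}) = 2x^{2m-1}$, gives $\phi'' = x^{2m-1}(\phi')^{-1}$; a further differentiation by the quotient rule, followed by clearing denominators with $(\phi')^2 = E + m^{-1}x^{2m}$, produces the stated formula for $\phi'''$. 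Substituting $(\phi'')^2 = x^{4m-2}(\phi')^{-2}$ and the expression for $\phi'''$ into $f = -h^2\bigl(\tfrac34(\phi')^{-2}(\phi'')^2 - \half(\phi')^{-1}\phi'''\bigr)$ and collecting the coefficients of $x^{2m}$ and of $E$ yields the closed form for $f$; this is routine.

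For (ii), set $w = E + m^{-1}x^{2m}$. Since $E = (\alpha + i\mu)h^{2m/(m+1)}$ with $\alpha,\mu>0$ and $m^{-1}x^{2m}\ge 0$, we have $\Re w = \alpha h^{2m/(m+1)} + m^{-1}x^{2m}$, so
\[
c\,(h^{2m/(m+1)} + x^{2m}) \le \Re w \le |w| \le |E| + m^{-1}x^{2m} \le C\,(h^{2m/(m+1)} + x^{2m})
\]
with $c,C$ depending only on $\alpha,\mu,m$; taking square roots and using $|\phi'| = |w|^{1/2}$ gives (ii).

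For (i) — the substantive part — observe first that the branch point $y^{2m} = -mE$ lies off the real axis because $\mu>0$, so the integrand is analytic near $\reals$ and even in $y$, whence $|\Im\phi(x)| = \int_0^{|x|} \Im\,(E + m^{-1}y^{2m})^{1/2}\,dy$ with the integrand nonnegative along the path. For $w$ in the open right half-plane, writing $\sqrt w = a+ib$ with $a>0$ gives $a^2 = \Re w + b^2 \ge \Re w$ and $2ab = \Im w$, hence $\Im\sqrt w = \Im w/(2a) \le \Im w/(2\sqrt{\Re w})$; applying this with $\Re w \ge \alpha h^{2m/(m+1)} + m^{-1}y^{2m}$ gives
\[
|\Im\phi(x)| \le \frac{\mu\,h^{2m/(m+1)}}{2}\int_0^{|x|}\frac{dy}{\sqrt{\alpha h^{2m/(m+1)} + m^{-1}y^{2m}}}.
\]
Then I would substitute $y = (\alpha m\,h^{2m/(m+1)})^{1/(2m)}s$, which turns the integral into $c_{m,\alpha}\,h^{(1-m)/(m+1)}\int_0^S (1+s^{2m})^{-1/2}\,ds$ with $S = |x|\,(\alpha m h^{2m/(m+1)})^{-1/(2m)}$. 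The integrand is $\O(s^{-m})$ as $s\to\infty$, so for $m\ge 2$ the $s$-integral is bounded uniformly in $S$, and since $h^{2m/(m+1)}\cdot h^{(1-m)/(m+1)} = h$ we obtain $|\Im\phi|\le Ch$. For $m=1$ the $s$-integral equals $\operatorname{arcsinh} S \le C(1+\log_+ S)$, and now $h^{2m/(m+1)} = h$, $h^{(1-m)/(m+1)} = 1$, giving $|\Im\phi|\le Ch(1+\log_+(|x|/h^{1/2}))$. The last assertion is then immediate: $|x|\le Ch^{1/(m+1)}$ forces $S = \O(1)$, so in both cases $|\Im\phi| = \O(h)\le C'$.

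The one genuinely delicate point is the exponent bookkeeping in (i): the cancellation $h^{2m/(m+1)}\cdot h^{(1-m)/(m+1)} = h$ is exactly what makes the estimate clean, and the dichotomy between a convergent $s$-integral ($m\ge 2$) and a logarithmically divergent one ($m=1$) is the source of the case split in the statement. Everything else is bookkeeping with the Hamilton–Jacobi formulas already set up above.
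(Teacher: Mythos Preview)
Your proposal is correct and follows essentially the same approach as the paper: for (i) you derive the identical pointwise bound $\Im\phi' \le \tfrac{\mu h^{2m/(m+1)}}{2}(\alpha h^{2m/(m+1)} + m^{-1}y^{2m})^{-1/2}$ (the paper writes $\phi'=s+it$ and solves for $t$, which is your $\Im\sqrt w = \Im w/(2a)$), and then integrate---you via the scaling substitution $y\mapsto h^{1/(m+1)}s$, the paper by splitting the integral at $y=h^{1/(m+1)}$; these are equivalent bookkeeping for the same estimate. Parts (ii) and (iii) the paper dismisses as ``simple computations,'' which is exactly what you carry out.
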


\begin{proof}
For (i) we write $\phi' = s + it$ for $s$ and $t$ real valued, and then
\[
E + m^{-1} x^{2m} = s^2 - t^2 + 2 i st.
\]
Hence
\[
s^2 \geq s^2 - t^2 = \alpha h^{2m/(m+1)} + m^{-1}x^{2m} ,
\]
so that
\[
t = \frac{\mu h^{2m/(m+1)}}{2s} \leq \frac{\mu
  h^{2m/(m+1)}}{2\sqrt{h^{2m/(m+1)} \alpha + m^{-1} x^{2m}}}.
\]
Then
\begin{align*}
| \Im \phi (x) | & \leq \int_0^{|x|} \phi'(y) dy \\
& \leq C \int_0^{h^{1/(m+1)}} h^{m/(m+1)} dy + C \int_{h^{1/(m+1)}}^x
h^{2m/(m+1)} y^{-m} dy \\
& = \begin{cases} \O ( h(1 + \log (x/h^{1/2}))), \quad m = 1, \\
\O(h), \quad m >1. \end{cases}
\end{align*}

Parts (ii) and (iii) are simple computations.

\end{proof}

In light of this lemma, $| u (x) |$ is comparable to $| \phi'
|^{-1/2}$, provided $| x | \leq C h^{1/2}$ when $m=1$.  We are only
interested in sharply localized quasimodes and in the case $m \geq 2$, so
let $\gamma = h^{1/(m+1)}$, choose $\chi(s) \in \Ci_c( \reals)$ such
that $\chi \equiv 1$ for $| s | \leq 1$ and $\supp \chi \subset
[-2,2]$.  Let
\[
\tu(x) = \chi(x/\gamma) u(x),
\]
and compute for $q \geq 2$: 
\begin{align*}
\| \tu \|_{L^q}^q & = \int_{| x | \leq 2 \gamma} \chi(x/\gamma)^q | u
|^q dx \\
& \sim \int_{|x| \leq 2\gamma} \chi(x/\gamma)^q | \phi' |^{-q/2} dx \\
& \sim h^{1/(m+1)} h^{ -qm/2(m+1)} \\
& \sim h^{(2-qm)/2(1+m)}.
\end{align*}
In particular,
\[
\| \tu \|_{L^2} \sim h^{(1-m)/2(1+m)},
\]
and so
\[
\| \tu \|_{L^q} \sim h^{(2/q -1)/2(m+1)} \| \tu \|_{L^2}.
\]

Further, $\tu$ satisfies the following equation:
\begin{align*}
(hD)^2 \tu & = \chi(x/\gamma) (hD)^2 u + [(hD)^2, \chi(x/\gamma)] u \\
& = (\phi')^2 \tu + f \tu + [(hD)^2, \chi(x/\gamma)] u \\
& = (\phi')^2 \tu + R,
\end{align*}
where 
\[
R = f \tu + [(hD)^2, \chi(x/\gamma)] u.
\]

\begin{lemma}
The remainder $R$ satisfies
\begin{equation}
\label{E:R-remainder}
\| R \|_{L^2} = \O (h^{2m/(m+1)}) \| \tu \|_{L^2}.
\end{equation}

\end{lemma}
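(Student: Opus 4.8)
The plan is to bound the two constituents of $R = f\tu + [(hD)^2,\chi(x/\gamma)]u$ separately, in each case reducing matters to a pointwise estimate on the (small) support of the relevant cutoff.

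First I would handle $f\tu$. Since $\tu$ is supported in $\{|x|\leq 2\gamma\}=\{|x|\leq 2h^{1/(m+1)}\}$, on that set $x^{2m}=\O(h^{2m/(m+1)})$, $|E|=\O(h^{2m/(m+1)})$, and part (ii) of the previous lemma gives $|\phi'|\geq c\,h^{m/(m+1)}$. Feeding these into the explicit formula for $f$ from part (iii) should give
\[
|f| \lesssim h^2\cdot h^{(2m-2)/(m+1)}\cdot h^{2m/(m+1)}\cdot h^{-4m/(m+1)} = h^{\,2-2/(m+1)} = h^{2m/(m+1)}
\]
uniformly on $\supp\tu$, whence $\|f\tu\|_{L^2}\lesssim h^{2m/(m+1)}\|\tu\|_{L^2}$.

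Next I would treat the commutator. With $(hD)^2=-h^2\p_x^2$ one has
\[
[(hD)^2,\chi(x/\gamma)]u = -h^2\bigl(\gamma^{-2}\chi''(x/\gamma)\,u + 2\gamma^{-1}\chi'(x/\gamma)\,\p_x u\bigr),
\]
and both summands are supported in the annulus $\gamma\leq|x|\leq 2\gamma$, of measure $\O(\gamma)=\O(h^{1/(m+1)})$, on which $|\phi'|\sim h^{m/(m+1)}$. The crucial input here is part (i) of the previous lemma: for $m\geq2$ one has $|\Im\phi|=\O(h)$ on that set, so $|e^{i\phi/h}|=e^{-\Im\phi/h}$ is bounded above and below by positive constants, giving $|u|\sim|\phi'|^{-1/2}\sim h^{-m/2(m+1)}$ and, after differentiating $u=(\phi')^{-1/2}e^{i\phi/h}$ and invoking part (iii),
\[
|\p_x u|\lesssim h^{-1}|\phi'|^{1/2}+|\phi'|^{-3/2}|\phi''|\lesssim h^{-1}h^{m/2(m+1)}
\]
on the annulus. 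I would then multiply these pointwise bounds by the prefactors $h^2\gamma^{-2}$, resp.\ $h^2\gamma^{-1}$, and by $(\mathrm{measure})^{1/2}=\gamma^{1/2}$, obtaining in each case an $L^2$ bound $\lesssim h^{2m/(m+1)}\cdot h^{(1-m)/2(1+m)}$; this equals $h^{2m/(m+1)}\|\tu\|_{L^2}$ by the previously recorded $\|\tu\|_{L^2}\sim h^{(1-m)/2(1+m)}$. Adding the two pieces gives \eqref{E:R-remainder}.

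The argument is entirely computational, so the main obstacle is really the bookkeeping of exponents---every term, slightly miraculously, lands on the same power $h^{(3m+1)/2(m+1)}$---together with the one conceptual point one must not skip: part (i) of the preceding lemma is what guarantees that the oscillatory factor $e^{i\phi/h}$ neither decays nor grows on the support of $\chi(\cdot/\gamma)$, so that $|u|$ and $|\p_x u|$ are genuinely of the sizes claimed there. Beyond this I foresee no real difficulty.
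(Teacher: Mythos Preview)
Your proposal is correct and follows essentially the same approach as the paper's proof: both split $R$ into the $f\tu$ piece and the commutator piece, bound $f$ pointwise on $\supp\tu$ using the formula from part (iii), and then estimate the commutator term by term on the annulus $\{\gamma\leq|x|\leq 2\gamma\}$ using $|\phi'|\sim h^{m/(m+1)}$ and the comparability $|u|\sim|\phi'|^{-1/2}$ coming from part (i). The only cosmetic difference is that the paper writes $hDu=\bigl(-\tfrac{h}{2i}\tfrac{\phi''}{\phi'}+\phi'\bigr)u$ before estimating, whereas you bound $|\partial_x u|$ directly; the resulting exponents are identical.
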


\begin{proof}
We have already computed the function $f$, which is readily seen to
satisfy 
\[
\| f \|_{L^\infty(\supp (\tu ))} = \O(h^{2m/(m+1)}),
\]
since $\supp (\tu) \subset \{ | x | \leq 2 h^{1/(m+1)} \}$.

On the other hand, since $\| \tu \|_{L^2} \sim h^{(1-m)/2(1+m)}$, we need only show that
\[
\| [(hD)^2, \chi(x/\gamma)] u\|_{L^2} \leq C h^{(3m+1)/2(m+1)}.
\]
We compute:
\begin{align*}
[(hD)^2, \chi(x/\gamma)] u & = -h^2 \gamma^{-2} \chi'' u + 2\frac{h}{i}
\gamma^{-1} \chi' hD u \\
& = -h^2 \gamma^{-2} \chi'' u + 2\frac{h}{i}
\gamma^{-1} \chi'  \left(-\frac{h}{2i} \frac{\phi''}{\phi'} + \phi'
\right) u \\
& = -h^2 \gamma^{-2} \chi'' u + 2\frac{h}{i}
\gamma^{-1} \chi' \left( -\frac{h}{2i} \frac{ x^{2m-1}}{(\phi')^2} +
  \phi' \right) u.
\end{align*}
The first term is estimated:
\[
\| h^2 \gamma^{-2} \chi'' u \|_{L^2} = \O(h^{2m/(m+1)}) \| u
\|_{L^2(\supp ( \tu))} =\O(h^{(3m+1)/2(m+1)}).
\]
Similarly, the remaining two terms are estimated:
\begin{align*}
\Bigg\| 2\frac{h}{i} &
\gamma^{-1} \chi' \left( -\frac{h}{2i} \frac{ x^{2m-1}}{(\phi')^2} +
  \phi' \right) u \Bigg\|_{L^2} \\
& = \O(h^{m/(m+1)} h^1 h^{(2m-1)/(m+1)}
h^{-2m/(m+1)}) \| u
\|_{L^2(\supp ( \tu))}  \\
& \quad + \O(h^{m/(m+1)} h^{2m/(m+1)} )  \| u
\|_{L^2(\supp ( \tu))}  \\
& = \O(h^{(3m+1)/2(m+1)}).
\end{align*}

\end{proof}

\subsection{Saturation of Strichartz estimates}
\label{SS:saturation}

In this subsection, we study Strichartz estimates for the separated
Schr\"odinger equation given the specific choice of initial conditions
in the form of quasimodes.

Now it is well known that for any $k$, there exists a spherical
harmonic $v_k$ of order $k$ which saturates
Sogge's bounds (Theorem \ref{T:Sogge}):
\[
-\Delta_{\SS^d} v_k = (k)(k+d -1) v_k, \quad \| v_k \|_{L^{2(d+1)/(d-1)}} \sim k^{(d-1)/2(d+1)} \|
v_k \|_{L^2}.
\]

Let $\lambda_k = k (k + n -2)$, $k \gg 1$, $h = \lambda_k^{-1/2}$, let
$\tu$ be the associated transversal quasimode constructed in the
previous section, 
and let 
\[
\phi_0(x, \theta) = v_k(\theta) \tu(x).
\]

Let $\phi(t, x, \theta) = e^{it \tau} \phi_0$ for some $\tau \in \cx$
to be determined.  Since the support of $\tu$ is very small, contained
in $\{ | x | \leq h^{1/(m+1)} / \kappa \}$, we have 
\[
A^{-2} = (1 + x^{2m} )^{-1/m} = 1 -\frac{1}{m} x^{2m} +
\O(h^{4m/(m+1)})
\]
on $\supp \tu$.  Then
\begin{align*}
(D_t + \tDelta) \phi & = P_k \phi \\
& = ( \tau - D_x^2 - A^{-2} \lambda_k - V_1(x) ) \phi \\
& = \lambda_k e^{it \tau} e^{i k \theta} \left[\left( \tau
    \lambda_l^{-1} - (\lambda^{-1}_k D_x^2 + 1 - \frac{1}{m}
  x^{2m} ) \right) \tu  +  \O( k^{-2}) \tu \right] \\
& = \lambda_k e^{it \tau} e^{i k \theta}  \left[ \left( \tau \lambda_k^{-1} - 1 -
    E_0 \right) \tu + R + \O( k^{-2}) \tu \right],
\end{align*}
where $R$ satisfies the remainder estimate \eqref{E:R-remainder}.  Set
\[
\tau = \lambda_k (1+ E_0) = \lambda_k (1 + \alpha k^{-2m/(m+1)}) + i \mu
k^{2/(m+1)}(1+\O(k^{-1})), \,\,\, \alpha, \mu >0
\]
so that we have
\[
\begin{cases}
(D_t + \tDelta) \phi = \tR, \\
\phi(0, x, \theta) = \phi_0
\end{cases}
\]
with
\begin{equation}
\label{E:tR}
\tR = \lambda_k e^{i t \tau } v_k (R(x, k) + \O(k^{-2}) \tu ).
\end{equation}

We compute the endpoint Strichartz estimate on an arbitrary time
interval $[0,T]$, with $p=2$, $q = 2^\star = 2n/(n-2)$ for $n \geq 3$:
\begin{align}
\| \phi \|_{L^2([0,T])L^q}^2 & = \int_0^T \| e^{it \tau} \phi_0
\|^2_{L^q} dt \notag \\
& = \int_0^T e^{-2t\Im \tau} \|  \phi_0 \|_{L^q}^2 dt \notag
\\
& = \frac{1 - e^{-2 T\Im \tau}}{2 \Im \tau} \| \phi_0
\|_{L^q}^2 \notag \\
& = \frac{1 - e^{-2 T\Im \tau}}{ 2 \Im \tau } \| 
\phi_0 \|_{L^q}^2 \notag \\
& \sim \frac{1 - e^{-2 T\Im \tau}}{ 2 \Im \tau } k^{(1-2/q)/(m+1)}
k^{(n-2)/n} \| \tu
\|_{L^2(\reals)}^2 \| v_k \|_{L^2(\SS^{n-1})}^2 \notag \\
& \sim k^{2\eta(m,n)} \| \phi_0 \|_{L^2}^2 \notag \\
& \sim \| (-\Delta_{\SS^{n-1}})^{\eta(m,n)} \phi_0 \|_{L^2}^2, \label{E:sharp-str}
\end{align}
where
\[
\eta(m,n) = \frac{1}{2(m+1)} \left( m\left( 1 - \frac{2}{n} \right) -1
\right).
\]

Now let $L(t)$ be the unitary Schr\"odinger propagator:
\[
\begin{cases}
(D_t + \tDelta) L = 0, \\
L(0) = \id,
\end{cases}
\]
and write using Duhamel's formula:
\[
\phi(t) = L(t) \phi_0 + i \int_0^t L(t) L^*(s) \tR(s) ds =: \phi_{\text{h}} + \phi_{\text{ih}},
\]
where $\phi_{\text{h}}$ and $\phi_{\text{ih}}$ are the homogeneous and
inhomogeneous parts respectively.  We want a lower bound on the
homogeneous Strichartz estimates, for which we need an upper bound on the
inhomogeneous Strichartz estimates.

Let us now assume for the purposes of contradiction that a better Strichartz estimate than that in
Corollary \ref{C:C1a} holds for all $\beta >0$.  That is, we
assume for each $\beta>0$, there exists $C_\beta$ such that 
\[
\| L(t) u_0 \|_{L^2 ([0,T]) L^{2^\star}} \leq C_\beta \| \lll
-\Delta_{\SS^{n-1}} \rrr^{r+ \beta} u_0 \|_{L^2},
\]
for some $r < \eta(m,n)/2$.  

In dimension $n = 2$, we take as usual $p >2$, $2 \leq q < \infty$,
and we immediately arrive at a contradiction to the scale-invariant case.

For dimension $n \geq 3$, we take $\beta>0$ sufficiently small that
$r + \beta < \eta(m,n)/2$, and we then have the complementary inhomogeneous Strichartz estimate: if
$v$ solves
\[
\begin{cases}
(D_t + \tDelta) v = F, \\
v(0) = 0,
\end{cases}
\]
then
\[
\| v \|_{L^2 ([0,T]) L^{2^\star}} \leq C \| \lll
-\Delta_{\SS^{n-1}} \rrr^{r + \beta}  F \|_{L^1([0,T])
  L^2 }.
\]

For the inhomogeneous part corresponding to our quasimode initial
data, we have $F = \tR$, with $\tR$ computed in \eqref{E:tR}.  Then 
\begin{align*}
\| & \phi_{\text{ih}} \|_{L^2([0,T]) L^{2^\star}} \\
& \leq C T^{1/2} \| \tR
\|_{L^2([0,T]) L^2} \\
& \leq C k^2 T^{1/2} \left( \int_0^T e^{-2 t \Im \tau } \|\lll
-\Delta_{\SS^{n-1}} \rrr^{r + \beta}  v_k (R(x, k) +
  \O(k^{-2}) \tu ) \|_{L^2}^2 dt  \right)^{1/2} \\
& \leq C k^2 k^{-2m/(m+1)} \left( \frac{1-e^{-2T \Im \tau}}{2\Im \tau}
\right)^{1/2} \| \lll
-\Delta_{\SS^{n-1}} \rrr^{r + \beta} \phi_0 \|_{L^2}.
\end{align*}
Recalling that $\Im \tau \sim k^{-2/(m+1)}$, if $T = \epsilon^2
k^{-2/(m+1)}$, we have
\begin{equation}
\label{E:ih-sharp-str}
\| \phi_{\text{h}} \|_{L^2([0,T]) L^{2^\star}} \leq C \epsilon \| \lll
-\Delta_{\SS^{n-1}} \rrr^{r + \beta} \phi_0 \|_{L^2} .
\end{equation}
Now, if $\epsilon >0$ is sufficiently small, but independent of $k$,
we have
\[
1 \geq 1-e^{-2T \Im \tau} \geq c_0,
\]
for some $c_0>0$, so that for this choice of $T$, we still have the
estimate \eqref{E:sharp-str}.  Combining \eqref{E:sharp-str} with
\eqref{E:ih-sharp-str} we have
\begin{align*}
C \| \lll
-\Delta_{\SS^{n-1}} \rrr^{r + \beta} \phi_0 \|_{L^2} & \geq \| L(t)
\phi_0 \|_{L^2([0,T]) L^{2^\star}} \\
& \geq \| \phi(t)
\|_{L^2([0,T]) L^{2^\star}} - \|   \phi_{\text{ih}}\|_{L^2([0,T])
  L^{2^\star}}  \\
& \geq C^{-1} \| \lll
-\Delta_{\SS^{n-1}} \rrr^{\eta(m,n)/2} \phi_0 \|_{L^2} ,
\end{align*}
for some constant $C>0$ independent of $k$.  But this is a
contradiction, since $r + \beta < \eta(m,n)/2$.  This proves the
near-sharpness of Corollary \ref{C:C1a}.

\bibliographystyle{alpha}
\bibliography{deg-str-bib}

\end{document}